\documentclass[graybox]{svmult}

\usepackage{type1cm}        
\usepackage{makeidx}         
\usepackage{graphicx}        
\usepackage{multicol}        
\usepackage[bottom]{footmisc}
\usepackage{tikz,pgfplots}
\usepackage{stackrel}

\usepackage{newtxtext}       %
\usepackage[varvw]{newtxmath}       

\makeindex             

\begin{document}

\title*{Conformally symplectic Dynamics}
\author{Marie-Claude ARNAUD}
\institute{Marie-Claude ARNAUD \at Universit\'e Paris Cit\'e, Sorbonne Universit\'e, CNRS, IMJ-PRG, F-75013 Paris, France Supported by ANR CoSyDy, ANR-21-CE40-0014-01. \email{arnaud@imj-prg.fr}}%
%
\maketitle

\abstract*{Each chapter should be preceded by an abstract (no more than 200 words) that summarizes the content. The abstract will appear \textit{online} at \url{www.SpringerLink.com} and be available with unrestricted access. This allows unregistered users to read the abstract as a teaser for the complete chapter.
Please use the 'starred' version of the \texttt{abstract} command for typesetting the text of the online abstracts (cf. source file of this chapter template \texttt{abstract}) and include them with the source files of your manuscript. Use the plain \texttt{abstract} command if the abstract is also to appear in the printed version of the book.}

\abstract{Dynamists have been studying Hamiltonian systems for a long time. However, many physical systems are dissipative and do not preserve a symplectic form. \\
This is the case, for example, with systems involving friction, which multiply the symplectic form by a constant smaller than 1.  We will prove that almost every point is in the unstable set of infinity for these systems and we will illustrate different situations that may arise with examples.
We will also study invariant manifolds by such dynamics. We will provide an example where an invariant proper submanifold is not isotropic and give different conditions that imply that a given invariant submanifold is isotropic. In particular, we will outline a strange link between isotropy and entropy. Examples demonstrate that some systems have a global attractor, while others do not. We will give a sufficient condition for a conformally Hamiltonian dynamics of a cotangent bundle to have a global attractor. Then we will introduce two very classical examples~: Ma$\tilde{\text{n}}$\'e example and damped mechanical systems.\\
After that,  we introduce the notion of locally symplectic manifold. Unlike symplectic manifolds, these manifolds carry conformally symplectic dynamics that have a conservative part and a dissipative part. For  conformally Hamiltonian flow, we will describe dissipative and conservative orbits using their number of rotation.
}

\section{Conformally symplectic Dynamics on a symplectic manifold~: an introduction}
\label{sec:1}
In this section, we present the framework, give several examples, and prove that for a
conformally symplectic dynamics, almost all Lebesgue points lie in the unstable set
of infinity. We also explain why the study of conformally symplectic flows reduces to the
study of Liouville flows.

 We assume that $(M^{(2d)}, \omega)$ is a {\sl symplectic manifold}, i.e. $M$ is an even-dimensional manifold endowed with a closed non-degenerate 2-form. 
\begin{remark}\phantom{fish}
\begin{itemize}
\item[$\bullet$] Darboux Theorem implies the existence of (symplectic) coordinates $(q, p)=(q_1, \dots, q_d, p_1, \dots, p_d)$ such that $$\omega=dq\wedge dp=\sum_{i=1}^ddq_i\wedge dp_i.$$
\item[$\bullet$] Sometimes, we will assume that $M$ is {\sl exact symplectic}, which means that there is a $1$-form $\lambda$ such that $\omega=-d\lambda$. Then $\lambda$ is called a {\sl Liouville $1$-form}.
\end{itemize}
\end{remark}
\begin{example}\phantom{fish}
\begin{itemize}
\item[$\bullet$] The usual Liouville form on $\mathbb R^{2d}$ is $\lambda=\sum_{i=1}^dp_idq_i$  where \\$(q_1, \dots, q_d, p_1, \dots, p_d)\in\mathbb R^{2d}$. Hence the corresponding symplectic form is $\omega=\sum_{i=1}^d dq_i\wedge dp_i$.
\item[$\bullet$] On an orientable (closed) surface, an area form is a non-exact symplectic form.
\item[$\bullet$] Let $N^{(d)}$ be a closed manifold. The cotangent bundle $T^*N$ is endowed with the tautological Liouville 1-form $\lambda$. If $q_1, \dots, q_d$ are coordinates on $N$ and $p_1, \dots p_d$ are the coordinates on $T^*_qM$ associated to the basis $(dq_1, \dots, dq_d)$, we have $\lambda=\sum_{i=1}^d p_idq_i$. Note that this is independent of the chosen coordinates $(q_1, \dots, q_d)$ .
\end{itemize}
\end{example}
\begin{definition} A conformally symplectic dynamics is
\begin{itemize}
\item[$\bullet$] either a diffeomorphism $f: M\to M$ such that there exists $a\in (0, 1)\cup (1, +\infty)$ so that $f^*\omega=a\, \omega$; then $a$ is called the {\sl conformality ratio}.
\item[$\bullet$] or a vector field $X$ on $M$ such that there exists $\alpha \in \mathbb R^*$ satisfying $L_X\omega=\alpha\, \omega$, where $L_X$ is the Lie derivative. Then the flow $(\varphi_t)$ satisfies $\varphi_t^*\omega=e^{\alpha t}\omega$.  It may happen that $X$ is not complete. The number $a$ is called the {\sl conformality rate}.
\end{itemize}
\end{definition}
By potentially reversing time, we can assume that $a\in (0,1)$ and $\alpha<0$. Thereafter, we will always assume this.
\begin{exercise}\label{ExoLibermann} Assume that $(M^{(2d)}, \omega)$ is a  $2d$-dimensional symplectic manifold with $d\geq 2$. Let $f: M\to M$  be a diffeomorphism and let $a:M\to\mathbb R$ be a function such that $f^*\omega=a\omega$. Prove that the function $a$ is locally constant (hence constant when $M$ is connected). This result is due to Libermann, \cite{Libermann1959}.

\end{exercise}

\begin{remark}\phantom{fish}
\begin{itemize}
\item[$\bullet$]  If $f^*\omega =a\omega$ with $a>0$ and $a\neq 1$, then $f^*\omega^{\wedge d}=a^n\omega^{\wedge d}$, the volume form is multiplied by a constant that is not $1$, hence the manifold has infinite volume. In particular, the manifold is not compact.
\item[$\bullet$] Even if $\omega$ is contracted, the diffeomorphism $f$ is not necessarily a contraction, see e.g. $f: \mathbb R^2\to \mathbb R^2$ defined by $f(x, y)=(x+1, ay)$. Note also that this example does not include any compact orbits.

\end{itemize}
\end{remark}

\begin{figure}[h!]
\centering
\begin{tikzpicture}
        \draw[->]  (-2.5,0) -- (0,0) ;
          \draw[->]  (0,0) -- (2.5,0) ;
        \draw[->][domain=-2.5:0][samples=200] plot (\x, {0.2*(0.6^(\x))});
         \draw[domain=0:2.5][samples=200] plot (\x, {0.2*(0.6^(\x))});
        \draw[->][domain=-2.5:0][samples=200] plot (\x, -{0.2*(0.6^(\x))});
         \draw[domain=0:2.5][samples=200] plot (\x, -{0.2*(0.6^(\x))});
        \draw[->][domain=-2.5:0][samples=200] plot (\x, {0.4*(0.6^(\x))});
                \draw[domain=0:2.5][samples=200] plot (\x, {0.4*(0.6^(\x))});
         \draw[->][domain=-2.5:0][samples=200] plot (\x, -{0.4*(0.6^(\x))});
                  \draw[domain=0:2.5][samples=200] plot (\x, -{0.4*(0.6^(\x))});
    \end{tikzpicture}
    \end{figure}
 
\begin{example}\label{E5} We consider on $T^*\mathbb T=\mathbb T\times \mathbb R$ the diffeomorphism $f$ defined by $f(\theta,r)=(\theta, ar)$. There is global attractor (see Definition \ref{Dattr+globattr}, Section \ref{sec:3}), namely $\mathbb T\times \{ 0\}$.

\begin{figure}[h!]
\centering
\begin{tikzpicture}
        \draw  (-3,0) -- (3,0) node[right] {$0$};;
        \draw[->] (-3,-2) -- (-3,-1) ;
         \draw  (-3,-1) -- (-3,0) ;
         \draw[->] (-3,2) -- (-3,1) ;
         \draw  (-3,1) -- (-3,0) ;
         \draw[->] (-2,-2) -- (-2,-1) ;
         \draw  (-2,-1) -- (-2,0) ;
         \draw[->] (-2,2) -- (-2,1) ;
         \draw  (-2,1) -- (-2,0) ;
 \draw[->] (-1,-2) -- (-1,-1) ;
         \draw  (-1,-1) -- (-1,0) ;
         \draw[->] (-1,2) -- (-1,1) ;
         \draw  (-1,1) -- (-1,0) ;

          \draw[->] (0,2) -- (0,1) ;
          \draw  (0,1) -- (0,0) ;
     \draw[->] (0,-2) -- (0,-1) ;
       \draw  (0,-1) -- (0,0) ;
       \draw[->] (3,-2) -- (3,-1) ;
         \draw  (3,-1) -- (3,0) ;
         \draw[->] (3,2) -- (3,1) ;
         \draw  (3,1) -- (3,0) ;
 \draw[->] (2,-2) -- (2,-1) ;
         \draw  (2,-1) -- (2,0) ;
         \draw[->] (2,2) -- (2,1) ;
         \draw  (2,1) -- (2,0) ;
 \draw[->] (1,-2) -- (1,-1) ;
         \draw  (1,-1) -- (1,0) ;
         \draw[->] (1,2) -- (1,1) ;
         \draw  (1,1) -- (1,0) ;
           \end{tikzpicture}
    \end{figure}
\end{example}

\begin{remark}\phantom{fish}
\begin{itemize}
\item[$\bullet$] Let $X$ be a conformally symplectic vector field  defined on  the symplectic manifold $(M, \omega)$. Then $d(i_X\omega)=\alpha \omega$ and the manifold is exact symplectic.
\item[$\bullet$] There exist conformally symplectic diffeomorphisms that are defined on a symplectic manifold that is not exact. Such an example is given in Proposition 2 of \cite{ArnaudFejoz2024}. 
\item[$\bullet$]  When $\omega=-d\lambda$ is exact, note that a vector field $X$ is conformally symplectic with conformality rate $\alpha$ if and only if $d(i_X\omega+\alpha \lambda)=0$.
\end{itemize}
\end{remark}
Hence let us look to the case when $i_X\omega+\alpha \lambda$ is exact.

\begin{definition} Assume $(M, \omega=-d\lambda)$ is exact symplectic, $\alpha\neq 0$ and $H:M\to\mathbb R$ is a $C^{1, 1}$ function. Then the {\sl conformally Hamiltonian} (CH) vector field $X=X_H^\alpha$ is defined by 
$$i_X\omega =\alpha \lambda +dH.$$

\end{definition}

\begin{exercise} Let $H:\mathbb T\times \mathbb R\to \mathbb R$ be defined by $H(\theta, r)=r^2\sin (2\pi \theta)$. Prove that for every $\alpha\neq 0$, the vector field $X_H^\alpha$ is not complete.
\end{exercise}

\begin{example} When $H=0$ and $\alpha=1$, the vector field is called the {\sl Liouville vector field} and is denoted by $Z_\lambda$. We have $i_{Z_\lambda}\omega=\lambda$. On a cotangent bundle endowed with its tautological 1-form, the Liouville flow is given by the equality $\varphi_t(q, p)=(q, e^{-t}p)$ and the zero-section is the global attractor (see Definition \ref{Dattr+globattr}, Section \ref{sec:3}).
\end{example}

\begin{remark}. Let $(M, \omega=-d\lambda)$ be exact symplectic. For $H:M\to\mathbb R$ $C^{1, 1}$, the notation $X_H$ is for the usual (symplectic) Hamiltonian vector field, i.e. $i_{X_H}\omega=dH$. Then we have~: $X_H^\alpha=\alpha Z_\lambda+X_H$.
\end{remark}
\begin{example}\label{E10}  Let $H:\mathbb T\times \mathbb R\to \mathbb R$ be defined by $H(\theta, r)=r\sin (2\pi \theta)$ and let $\alpha\in (0, 2\pi)$. The conformally Hamilton equations are 
\[ \begin{cases} \dot \theta  =\sin 2\pi \theta;\\
\dot r=-(\alpha +2\pi \cos 2\pi \theta )r
\end{cases}
\]
Because there is one point whose $\omega$-limit set is empty, there is no compact global attracteur.

\begin{figure}[h!]
\centering
\begin{tikzpicture}
  \draw  (1.5,0) -- (0,0) ;
 \draw[->]  (3,0) -- (1.5,0) ;
    \draw (-1.5,0) -- (0,0) ;
     \draw[->]  (-3,0) -- (-1.5,0) ;
       
        \draw[->] (-3,-2) -- (-3,-1) ;
         \draw  (-3,-1) -- (-3,0) ;
         \draw[->] (-3,2) -- (-3,1) ;
         \draw  (-3,1) -- (-3,0) ;
          (-1,0) ;

          \draw[->] (0,0) -- (0,1) ;
          \draw  (0,1) -- (0,2) ;
     \draw(0,-1) -- (0,-2) ;
       \draw [->]  (0,0) -- (0,-1) ;
       \draw[->] (3,-2) -- (3,-1) ;
         \draw  (3,-1) -- (3,0) ;
         \draw[->] (3,2) -- (3,1) ;
         \draw  (3,1) -- (3,0) ;
  
  \draw[->][domain=-2.96:-1.5][samples=200] plot (\x, {0.3*((\x+0.001)^(-1)*(\x+3.001)^(-1)});
  \draw[domain=-1.5:-0.04][samples=200] plot (\x, {0.3*((\x+0.001)^(-1)*(\x+3.001)^(-1)});
  
  \draw[->][domain=-2.96:-1.6][samples=200] plot (\x, {0.15*((\x+0.001)^(-1)*(\x+3.001)^(-1)});
  \draw[domain=-1.6:-0.04][samples=200] plot (\x, {0.15*((\x+0.001)^(-1)*(\x+3.001)^(-1)});
  
  \draw[->][domain=-2.92:-1.7][samples=200] plot (\x, {0.45*((\x+0.001)^(-1)*(\x+3.001)^(-1)});
  \draw[domain=-1.7:-0.08][samples=200] plot (\x, {0.45*((\x+0.001)^(-1)*(\x+3.001)^(-1)});

 \draw[->][domain=-2.96:-1.5][samples=200] plot (\x, {-0.3*((\x+0.001)^(-1)*(\x+3.001)^(-1)});
  \draw[domain=-1.5:-0.04][samples=200] plot (\x, {-0.3*((\x+0.001)^(-1)*(\x+3.001)^(-1)});
  
  \draw[->][domain=-2.96:-1.6][samples=200] plot (\x, {-0.15*((\x+0.001)^(-1)*(\x+3.001)^(-1)});
  \draw[domain=-1.6:-0.04][samples=200] plot (\x, {-0.15*((\x+0.001)^(-1)*(\x+3.001)^(-1)});
  
  \draw[->][domain=-2.92:-1.7][samples=200] plot (\x, {-0.45*((\x+0.001)^(-1)*(\x+3.001)^(-1)});
  \draw[domain=-1.7:-0.08][samples=200] plot (\x, {-0.45*((\x+0.001)^(-1)*(\x+3.001)^(-1)});

\draw[->][domain=2.96:1.5][samples=200] plot (\x, {0.3*((\x+0.001)^(-1)*(\x-3.001)^(-1)});
  \draw[domain=1.5:0.04][samples=200] plot (\x, {0.3*((\x+0.001)^(-1)*(\x-3.001)^(-1)});
  
  \draw[->][domain=2.96:1.6][samples=200] plot (\x, {0.15*((\x+0.001)^(-1)*(\x-3.001)^(-1)});
  \draw[domain=1.6:0.04][samples=200] plot (\x, {0.15*((\x+0.001)^(-1)*(\x-3.001)^(-1)});
  
 \draw[->][domain=2.92:1.7][samples=200] plot (\x, {0.45*((\x+0.001)^(-1)*(\x-3.001)^(-1)});
  \draw[domain=1.7:0.08][samples=200] plot (\x, {0.45*((\x+0.001)^(-1)*(\x-3.001)^(-1)});

\draw[->][domain=2.96:1.5][samples=200] plot (\x, {-0.3*((\x+0.001)^(-1)*(\x-3.001)^(-1)});
  \draw[domain=1.5:0.04][samples=200] plot (\x, {-0.3*((\x+0.001)^(-1)*(\x-3.001)^(-1)});
  
  \draw[->][domain=2.96:1.6][samples=200] plot (\x, {-0.15*((\x+0.001)^(-1)*(\x-3.001)^(-1)});
  \draw[domain=1.6:0.04][samples=200] plot (\x, {-0.15*((\x+0.001)^(-1)*(\x-3.001)^(-1)});
  
 \draw[->][domain=2.92:1.7][samples=200] plot (\x, {-0.45*((\x+0.001)^(-1)*(\x-3.001)^(-1)});
  \draw[domain=1.7:0.08][samples=200] plot (\x, {-0.45*((\x+0.001)^(-1)*(\x-3.001)^(-1)});

                           \end{tikzpicture}
    \end{figure}

\end{example}
We recall
\begin{definition}
Let $X$ be a topological space and let $f: X\to X$ be a homeomorphism. Let $x\in X$. Then 
\begin{itemize}
\item[$\bullet$] A point $y\in X$ is in the $\alpha$-limit set $\alpha(x)$ of $x$ if it is a limit point of $(f^{-n}(x))_{n\in\mathbb N}$.
\item[$\bullet$]A point $y\in X$ is in the $\omega$-limit set $\omega(x)$ of $x$ if it is a limit point of $(f^n(x))_{n\in\mathbb N}$.

\end{itemize} 
\end{definition}
For dynamicists, the interesting part of the dynamics is contained in the {\sl non-wandering set}.

\begin{definition} Let $X$ be a topological space and let $f: X\to X$ be a homeomorphism.
\begin{itemize}
\item[$\bullet$] a set $U\subset X$ is {\sl wandering} if $\forall n\in\mathbb N^*, f^n(U)\cap U=\emptyset$;
\item[$\bullet$] a point $x\in X$ is {\sl wandering} if $x$ has a wandering neighbourhood;
\item[$\bullet$] a point $x\in X$ is positively (resp. negatively) {\sl recurrent} if there exists an increasing sequence $(n_k)_{k\in\mathbb N}\in \mathbb N^\mathbb N$ such that 
$$\lim_{k\to +\infty}f^{n_k}(x)=x \text{ (resp. }\lim_{k\to +\infty}f^{-n_k}(x)=x \text{)}.$$
\end{itemize}
\end{definition}
The set of non-wandering points is the {\sl non-wandering set} and is denoted by $\Omega(f)$. The set of positively (resp. negatively) recurrent points is denoted $\mathcal R_+(f)$ (resp. $\mathcal R_-(f)$). Recurrent points belong to $\Omega (f)$. The set $\mathcal P(f)$ of periodic points is contained in $\mathcal R_-(f)\cap\mathcal R_+(f)$.\\
The four  sets $\mathcal P(f)$, $\Omega(f)$, $\mathcal R_+(f)$ and $\mathcal R_-(f)$ are $f$ invariant. Moreover, $\Omega(f)$ is closed.
\begin{exercise} Find an example where $\mathcal R_+(f)\neq \Omega(f)$.
\end{exercise}
Definitions exist for flows also. We will not  give them.
\begin{question}
Let $f$ be a conformally symplectic diffeomorphism of a symplectic manifold. Has $\Omega(f)$ zero volume? No interior?
\end{question}
We do not know the answer in the general case. The answer is positive when $\Omega(f)$ has finite volume because we have~: 
\begin{lemma}
Let $f: M\to M$ be a conformally symplectic diffeomorphism of a symplectic manifold. Then, every measurable invariant subset has zero or infinite volume.
\end{lemma}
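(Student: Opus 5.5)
The plan is to use the symplectic volume form $\mu=\omega^{\wedge d}$, or more precisely the positive measure $|\mu|$ it defines on $M$, and to show that $f$ rescales this measure by a constant different from $1$. Since $f^*\omega=a\,\omega$, we get
\[
f^*\mu=(f^*\omega)^{\wedge d}=a^d\,\omega^{\wedge d}=a^d\mu .
\]
By the change of variables formula for the diffeomorphism $f$, this means that for every measurable set $A\subset M$,
\[
\mathrm{vol}(f(A))=\int_{f(A)}|\mu|=\int_A |f^*\mu|=a^d\,\mathrm{vol}(A).
\]
Here $a>0$ (we assume $a\in(0,1)$), so $|a^d\mu|=a^d|\mu|$ and no sign issue arises. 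Equivalently, $\mathrm{vol}(f^{-1}(A))=a^{-d}\mathrm{vol}(A)$. The statement says ``volume'' without specifying a measure; I will interpret it as the symplectic (Liouville) volume. The conclusion about zero volume holds for any smooth Lebesgue class, because null sets are preserved.

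Next, let $A$ be measurable and invariant. Invariance may mean $f(A)=A$, or only $f^{-1}(A)=A$ up to a null set; in either case $\mathrm{vol}(f(A))=\mathrm{vol}(A)$. Combining this with the identity above gives
\[
\mathrm{vol}(A)=a^d\,\mathrm{vol}(A),
\]
with $a^d\neq1$. In $[0,+\infty]$, the only solutions of $v=a^dv$ are $v=0$ and $v=+\infty$. This proves the lemma.

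There is no real obstacle here. The only point requiring care is the change of variables for a diffeomorphism $f$ that need not preserve orientation. I handle it by working with the density $|\mu|$ rather than the form $\mu$: locally, $f^*|\mu|=|\det|\cdot|\mu|$ in charts, and $f^*\mu=a^d\mu$ forces this density to equal $a^d|\mu|$.

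As a remark, this answers the preceding Question when $\mathrm{vol}(\Omega(f))<\infty$. The set $\Omega(f)$ is closed, hence measurable, and it is invariant, so it has zero volume and therefore empty interior.
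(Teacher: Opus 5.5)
Your proof is correct and is essentially the argument the paper has in mind. The paper writes out no separate proof, but its earlier remark that $f^*\omega^{\wedge d}=a^d\omega^{\wedge d}$ with $a^d\neq 1$ is exactly your key step: the Liouville volume $v$ of an invariant set satisfies $v=a^d v$, which forces $v\in\{0,+\infty\}$. You were also right to read ``invariant'' as $f(A)=A$ (possibly mod null sets) rather than as forward invariance, since the dichotomy fails for forward-invariant sets such as $\mathbb T\times[-1,1]$ under $(\theta,r)\mapsto(\theta,ar)$.
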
 
In particular, when $\Omega(f)$ is compact, it has zero volume and no interior.
\begin{definition} Let $X$ be a topological space. Let $(x_n)_{n\in\mathbb N}$ be a sequence of points of $X$. The sequence {\sl tends to infinity} if for every compact subset $K$ of $X$, there exists $N\in\mathbb N$ such that 
$$\forall n\geq N; x_n\notin K.$$\end{definition}
\noindent {\bf Notation}
Let $X$ be a topological space and let $f: X\to X$ be a homeomorphism. We denote by $B(f)$ the set of $x\in X$ such that the sequence $(f^{-n}(x))_{n\in\mathbb N}$ does not  tend to infinity when $n\to +\infty$.\\
In other words, $B(f)$ is the set of points whose negative orbit does not tend to infinity. The complement of $B(f)$ is the unstable set of infinity, i.e. the set of points whose negative orbit tends to infinity.

\begin{proposition}\label{PunstablesetInfinity}
Let $M$ be a   Riemannian manifold endowed with the corresponding volume form $V$ and let $f: M\to M$ be a diffeomorphism. We assume that there exists $a\in (0, 1)$ such that the function $A:M\to \mathbb R$ defined by $f^*V=AV$ satisfies $\vert A\vert \leq a$. 
 Then $V(B(f))=0$. 
\end{proposition}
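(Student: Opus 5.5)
The plan is to write $B(f)$ as a countable union of sets, each of which is covered by a compact set $K$ that some negative iterate always returns to, and then show that each such set has volume zero by comparing volumes under forward and backward iteration. Since $M$ is a manifold, it is $\sigma$-compact. Fix an exhaustion $K_1\subset K_2\subset\cdots$ by compact sets, each contained in the interior of the next, whose union is $M$.

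A point $x$ lies in $B(f)$ exactly when there is a compact $K$ such that $f^{-n}(x)\in K$ for infinitely many $n$. Taking $K=K_j$, we get
$$B(f)=\bigcup_{j\ge1} B_j,\qquad B_j=\bigcap_{N\ge 0}\bigcup_{n\ge N} f^{n}(K_j).$$
Each $B_j$ is a Borel set, since it is a $\limsup$ of the compact sets $f^n(K_j)$. So it suffices to prove $V(B_j)=0$ for every $j$.

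Now fix $j$. The change of variables formula gives $V(f(E))=\int_E |A|\,dV\le a\,V(E)$ for every measurable $E$. Iterating this, $V(f^n(K_j))\le a^n V(K_j)$, and $V(K_j)<\infty$ because $K_j$ is compact. Hence for every $N$,
$$V(B_j)\le \sum_{n\ge N} V\bigl(f^n(K_j)\bigr)\le \frac{a^N}{1-a}\,V(K_j).$$
Letting $N\to\infty$ gives $V(B_j)=0$. Countable subadditivity then gives $V(B(f))=0$.

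The only delicate step is the set-theoretic description of $B(f)$. The claim is that the negative orbit of $x$ fails to tend to infinity if and only if there is a compact $K$ with $f^{-n}(x)\in K$ for infinitely many $n$; this is just the negation of the definition. We must also reduce an arbitrary compact $K$ to one of the $K_j$. Because the interiors of the $K_j$ cover $M$ and are increasing, every compact $K$ lies inside some $K_j$, and so the countable union suffices. Once this is in place, the volume estimate is a Borel–Cantelli type summation of a geometric series and presents no difficulty.
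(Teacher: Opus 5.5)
Your proof is correct, but it takes a different route from the paper's.

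\textbf{The paper's argument.} The paper also exhausts $M$ by compact sets $K_N$. It works with $B_N(f)$, the set of points of $K_N$ whose backward orbit returns to $K_N$ infinitely often. It then uses the first-return map $\mathcal P_N$ of $f^{-1}$ to $B_N(f)$. This map sends $B_N(f)$ into itself, and since it is piecewise equal to some $f^{-k}$ with $k\geq 1$, it satisfies $V(\mathcal P_N(E))\geq \frac1a V(E)$. Because $V(B_N(f))\leq V(K_N)<\infty$, the inclusion $\mathcal P_N(B_N(f))\subset B_N(f)$ forces $V(B_N(f))=0$.

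\textbf{Your argument.} You write $B_j=\limsup_n f^n(K_j)$ and sum a geometric series, Borel--Cantelli style.

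\textbf{What your route buys.} It is shorter and more self-contained. You never need the measurability of the pieces of the first-return map, nor its injectivity, which the paper's inequality $V(\mathcal P_N(E))\geq \frac1a V(E)$ implicitly requires. It is also quantitative: it gives $V\bigl(\bigcup_{n\geq N} f^n(K_j)\bigr)\leq a^N V(K_j)/(1-a)$.

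\textbf{What the paper's route buys.} Its recurrence-type argument uses only two facts: each return strictly expands volume, and $B_N(f)$ has finite volume. It therefore carries over to the weaker hypothesis in the exercise that follows the proof, namely only $V(f(E))<V(E)$ for sets of finite positive volume. Under that hypothesis $\sum_n V(f^n(K_j))$ may diverge, so your geometric-series bound is not available.
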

This means that Lebesgue almost every point in $M$ comes from infinity (when the time goes to $-\infty$). In other words, the unstable set of infinity has full Lebesgue measure.

Of course, conformally dynamics satisfy the hypotheses of Proposition \ref{PunstablesetInfinity}.

\begin{corollary}\label{CunstablesetInfinity}
Let $(M^{(2d)},\omega)$ be a symplectic manifold and let $f: M\to M$ be a diffeomorphism with conformality ratio $a\in (0, 1)$. Then $\omega^{\wedge d}(B(f))=0$. 
\end{corollary}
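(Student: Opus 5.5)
The corollary will be obtained by reducing it to Proposition \ref{PunstablesetInfinity}. The plan is to equip $M$ with a Riemannian metric whose volume form is comparable to the symplectic volume, and then check the contraction hypothesis $|A|\le a^d<1$.

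First, since $f^*\omega=a\,\omega$, taking the $d$-th exterior power gives
\[
f^*\omega^{\wedge d}=a^d\,\omega^{\wedge d}.
\]
The form $\omega^{\wedge d}$ is a volume form because $\omega$ is non-degenerate. In particular $M$ is orientable. I will choose a Riemannian metric $g$ on $M$ whose Riemannian volume form $V$ equals $\omega^{\wedge d}$ exactly. To do this, start from any metric $g_0$ with volume form $V_0$ (taken in the orientation defined by $\omega^{\wedge d}$), and write $\omega^{\wedge d}=\rho V_0$ with $\rho>0$ smooth. Then $g=\rho^{2/(2d)}g_0=\rho^{1/d}g_0$ has volume form $\rho V_0=\omega^{\wedge d}$. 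Alternatively, one can take a metric compatible with $\omega$ through an almost complex structure, whose volume form is $\omega^{\wedge d}/d!$; a constant factor is harmless here.

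With this choice, $f^*V=A\,V$ with $A\equiv a^d$, and $0<a^d<1$ since $a\in(0,1)$. Thus the hypothesis of Proposition \ref{PunstablesetInfinity} holds with constant $a^d$ in place of $a$. The proposition then gives $V(B(f))=0$, that is, $\omega^{\wedge d}(B(f))=0$.

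The only mildly delicate point is making sure the Riemannian volume $V$ coincides with, or is a constant multiple of, $\omega^{\wedge d}$, so that the measure-zero conclusion transfers. The conformal rescaling of $g_0$ handles this directly. In any case, any two smooth positive densities have the same null sets, so the conclusion would hold even without exact equality; exact equality is only needed to verify the hypothesis on $A$.
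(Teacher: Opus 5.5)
Your proof is correct and follows the paper's own route. The paper simply remarks that conformally symplectic maps satisfy the hypotheses of Proposition \ref{PunstablesetInfinity}, since $f^*\omega^{\wedge d}=a^d\,\omega^{\wedge d}$ with $a^d\in(0,1)$; your conformal rescaling of a metric, so that its Riemannian volume is exactly $\omega^{\wedge d}$, just makes that remark explicit.
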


Also, the same result is true for flows. See \cite{ArnaudSuZavidovique2015} for a proof.

\begin{corollary}
Let $M$ be a   Riemannian manifold endowed with the corresponding volume form $V$ and let $f: M\to M$ be a diffeomorphism. We assume that there exists $a\in (0, 1)$ such that the function $A:M\to \mathbb R$ defined by $f^*V=AV$ satisfies $\vert A\vert \leq a$. 
Then $V(\mathcal R_-(f))=0$. 
\end{corollary}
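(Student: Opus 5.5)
The corollary follows directly from Proposition \ref{PunstablesetInfinity}. I will show the inclusion $\mathcal R_-(f)\subset B(f)$; the proposition gives $V(B(f))=0$, and monotonicity of the measure then gives $V(\mathcal R_-(f))=0$. Measurability is no issue here, since we only need $\mathcal R_-(f)$ to be contained in a null set: the outer measure vanishes, and the Lebesgue measure associated to $V$ is complete. In fact $\mathcal R_-(f)$ is Borel anyway, because it can be written as $\bigcap_k\bigcap_N\bigcup_{n\geq N}\{x: d(f^{-n}(x),x)<1/k\}$ for a distance $d$ compatible with the topology of $M$.

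For the inclusion, take $x\in\mathcal R_-(f)$. By definition there is an increasing sequence $(n_k)$ with $f^{-n_k}(x)\to x$. I will show that the sequence $(f^{-n}(x))_{n\in\mathbb N}$ does not tend to infinity. Since $M$ is a manifold, it is locally compact, so $x$ has a compact neighbourhood $K$. For every $k$ large enough, $f^{-n_k}(x)\in K$. Because $n_k\to+\infty$, this means that for every $N$ there is some $n\geq N$ with $f^{-n}(x)\in K$. That negates the definition of tending to infinity, so $x\in B(f)$.

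No step is a real obstacle. The only points to watch are, first, using local compactness to produce a compact set containing infinitely many terms of the backward orbit, and second, the measurability remark above. Combining the inclusion with Proposition \ref{PunstablesetInfinity} gives $V(\mathcal R_-(f))\leq V(B(f))=0$.
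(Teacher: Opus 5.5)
Your proposal is correct and is the argument the paper intends. The paper states this corollary right after Proposition \ref{PunstablesetInfinity} with no separate proof, because a negatively recurrent point has a backward orbit that returns infinitely often to a compact neighbourhood of itself, so $\mathcal R_-(f)\subset B(f)$ and hence $V(\mathcal R_-(f))\leq V(B(f))=0$; your remark that $\mathcal R_-(f)$ is a $G_\delta$, hence measurable, is a correct detail the paper leaves implicit.
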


\begin{corollary}
Let $(M^{(2d)},\omega)$ be a symplectic manifold and let $f: M\to M$ be  a conformally symplectic diffeomorphism with conformality ratio $a\in (0, 1)$. Then $\omega^{\wedge d}(\mathcal R_-(f))=0$. 
\end{corollary}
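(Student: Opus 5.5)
The last statement is the Corollary asserting $\omega^{\wedge d}(\mathcal R_-(f))=0$ for a conformally symplectic diffeomorphism with ratio $a\in(0,1)$. The plan is to show that $\mathcal R_-(f)\subset B(f)$, and then invoke Corollary \ref{CunstablesetInfinity}, which says that $\omega^{\wedge d}(B(f))=0$.

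For the inclusion, let $x\in\mathcal R_-(f)$. Then there is an increasing sequence $(n_k)$ with $f^{-n_k}(x)\to x$. Choose a compact neighbourhood $K$ of $x$; for instance, a closed coordinate ball, which is compact because $M$ is a manifold. For $k$ large we have $f^{-n_k}(x)\in K$, so $f^{-n}(x)\in K$ for infinitely many $n$. Hence the sequence $(f^{-n}(x))_{n\in\mathbb N}$ does not tend to infinity, which means $x\in B(f)$.

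It remains to make sure the measure statement is well posed. Here $\omega^{\wedge d}(\mathcal R_-(f))$ is understood as the measure associated with the volume form $\vert\omega^{\wedge d}\vert$. The set $\mathcal R_-(f)$ is Borel: it can be written as $\bigcap_{m}\bigcap_{N}\bigcup_{n\ge N}\{x: d(f^{-n}x,x)<1/m\}$ for an auxiliary metric $d$. In any case it is contained in the null set $B(f)$, so it is measurable for the completed measure and has measure zero.

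Alternatively, one can argue directly from the Riemannian version. Take any Riemannian metric on $M$ whose volume form is $\vert\omega^{\wedge d}\vert$; such a metric exists by rescaling an arbitrary metric by a positive function. Then $f^*\omega^{\wedge d}=a^d\omega^{\wedge d}$ gives $\vert A\vert=a^d<1$. The preceding Corollary (the Riemannian statement $V(\mathcal R_-(f))=0$) then applies verbatim.

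There is no real obstacle here. The only point needing care is the local compactness used to produce the compact neighbourhood $K$, and this is automatic for manifolds.
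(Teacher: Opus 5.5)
Your proposal is correct and is the argument the paper intends: the paper gives no separate proof of this corollary. It is meant to follow from Proposition~\ref{PunstablesetInfinity} and Corollary~\ref{CunstablesetInfinity} via the inclusion $\mathcal R_-(f)\subset B(f)$ that you establish. Your alternative route, through a metric with volume $\vert\omega^{\wedge d}\vert$ and $\vert A\vert=a^d<1$, is the same deduction, and your measurability remark, that $\mathcal R_-(f)$ is a $G_\delta$ set, is a correct bonus detail.
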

\begin{corollary}
Let $M$ be a   Riemannian manifold endowed with the corresponding volume form $V$ and let $f: M\to M$ be a diffeomorphism. We assume that there exists $a\in (0, 1)$ such that the function $A:M\to \mathbb R$ defined by $f^*V=AV$ satisfies $\vert A\vert \leq a$. Then the set of points whose negative orbit is relatively compact has zero Lebesgue measure.
\end{corollary}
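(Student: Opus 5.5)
The statement follows from Proposition \ref{PunstablesetInfinity}. Let $E$ be the set of points $x\in M$ whose negative orbit $(f^{-n}(x))_{n\in\mathbb N}$ is relatively compact. I will show that $E\subset B(f)$, and then monotonicity of the measure $V$ gives $V(E)\le V(B(f))=0$.

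To prove the inclusion, take $x\in E$ and let $K$ be the closure of $\{f^{-n}(x);\ n\in\mathbb N\}$. This $K$ is compact by assumption. For every $n$ we have $f^{-n}(x)\in K$. So there is no $N$ with $f^{-n}(x)\notin K$ for all $n\ge N$. By the definition of tending to infinity, the sequence $(f^{-n}(x))$ does not tend to infinity, that is, $x\in B(f)$.

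One caveat: if $M$ itself were compact, every orbit would be relatively compact. This causes no contradiction. In that case the hypothesis $\vert A\vert\le a<1$ forces $V(M)=V(f(M))\le aV(M)$, and finite volume then gives $V(M)=0$, which is consistent with the proposition. So no special case is needed.

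Measurability of $E$ is the only point to check. I would write
\[
E=\bigcup_{j}\bigcap_{n\in\mathbb N} f^{n}(K_j),
\]
where $(K_j)$ is an exhaustion of $M$ by compact sets. This works because any compact set is contained in some $K_j$, since the interiors of the $K_j$ cover $M$. Hence $E$ is a Borel set. Alternatively, it suffices to note that $E$ is contained in the null set $B(f)$, so it is Lebesgue-measurable with measure zero for the completed measure. I do not expect any real obstacle: all the analytic content is already in Proposition \ref{PunstablesetInfinity}.
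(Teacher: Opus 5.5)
Your proof is correct and matches the paper's intended argument: a point whose negative orbit is relatively compact lies in $B(f)$, so the set has measure zero by Proposition \ref{PunstablesetInfinity}. Your extra check that the set is Borel, via the $\sigma$-compact description $\bigcup_j\bigcap_n f^n(K_j)$, is also correct; the paper does not spell it out, and it is not needed once the set is contained in a null set.
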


\begin{corollary}
Let $(M,\omega)$ be a symplectic manifold and let $f: M\to M$ be a diffeomorphism with conformality ratio $a\in (0, 1)$. Then the set of points whose negative orbit is relatively compact has zero Lebesgue measure.
\end{corollary}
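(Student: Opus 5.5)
The last corollary is the symplectic version of the preceding Riemannian corollary, so I will deduce it from the statements already established. Everything reduces to one observation. If the negative orbit $(f^{-n}(x))_{n\in\mathbb N}$ is relatively compact, then its closure $K$ is a compact set containing every $f^{-n}(x)$. Hence the sequence does not tend to infinity: taking $K$ itself in the definition, the condition $f^{-n}(x)\notin K$ fails for all $n$. So the set $\mathcal{C}$ of points whose negative orbit is relatively compact is contained in $B(f)$.

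I then apply Corollary \ref{CunstablesetInfinity}, which gives $\omega^{\wedge d}(B(f))=0$ for a diffeomorphism with conformality ratio $a\in(0,1)$. Therefore $\omega^{\wedge d}(\mathcal{C})=0$. For this step to make sense, $\mathcal{C}$ should be measurable or at least contained in a null set. Containment in the null set $B(f)$ suffices if measures are taken in the completed sense. Alternatively, I would note that $\mathcal{C}=\bigcup_j\bigcap_n f^{n}(K_j)$ for an exhaustion of $M$ by compact sets $(K_j)$, which is Borel.

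Next I pass from $\omega^{\wedge d}$ to Lebesgue measure. Since $\omega^{\wedge d}$ is a nowhere-vanishing smooth volume form, in any chart it equals a positive continuous density times Lebesgue measure. So null sets for $\omega^{\wedge d}$ coincide with Lebesgue-null sets, and the claim follows because $M$ is covered by countably many charts.

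If I want to avoid relying on Corollary \ref{CunstablesetInfinity}, I would instead use Proposition \ref{PunstablesetInfinity} directly. I would put on $M$ a Riemannian metric compatible with $\omega$, for which the Riemannian volume $V$ is a constant multiple of $\omega^{\wedge d}$. Then $f^*V=a^dV$ with $0<a^d<1$, so the hypothesis of the proposition holds, and the argument goes through the same way.

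None of these steps is a real obstacle; the only point needing care is the measurability/completion remark above.
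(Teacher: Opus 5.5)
Your proof is correct and follows the paper's own route. The paper gives no separate proof for this corollary because a relatively compact negative orbit cannot tend to infinity, so the set lies in $B(f)$, which is null by Proposition~\ref{PunstablesetInfinity} and Corollary~\ref{CunstablesetInfinity}. Your remarks on measurability, on why $\omega^{\wedge d}$-null sets are Lebesgue-null, and on the $\omega$-compatible metric giving $f^*V=a^dV$ simply spell out details the paper leaves implicit.
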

\begin{proof}[Proof of Proposition \ref{PunstablesetInfinity}]
We write $M=\bigcup_{n\in \mathbb N}K_n$ where $(K_n)_{n\in\mathbb N}$ is an increasing sequence of compact subsets of $M$ and define 
$B_N(f)=\{ x\in K_N;$
$$ \exists(k_n)_{n\in\mathbb N}\text{ increasing sequence in }\mathbb N; \forall n\in\mathbb N, f^{-k_n}(x)\in K_N\}.$$
Then the first return map $\mathcal P_N: B_N\to B_N$ for $f^{-1}$ is defined from $B_N(f)$ to $B_N(f)$ and for every measurable subset $A$ of $B_N$, we have $V(\mathcal P_N(A))\geq \frac{1}{a}V(A)$ where $\frac{1}{a}>1$.  As $B_N(f)\subset K_N$ has finite volume and $\mathcal P_N(B_N(f))\subset B_N(f)$, this implies that $V(B_N(f))\geq \frac{1}{a}V(B_N(f))$  and $B_N(f)$ has finite volume, hence
 $V(B_N(f))=0$. As $B(f)=\bigcup_{n\in\mathbb N} B_n(f)$, we deduce that\\
  $V(B(f))=0$.
\end{proof}
\begin{exercise}
Prove  the conclusion of Proposition  \ref{PunstablesetInfinity} if we just assume that for every measurable subset $A$ of $M$ with finite and non zero volume, we have $V(f(A))<V(A)$.
\end{exercise}
If we look to our previous examples, we have
\begin{itemize}
\item[$\bullet$] For $f: \mathbb R^2\to \mathbb R^2$ defined by $f(x, y)=(x+1, ay)$, $B(f)=\emptyset$;
\item[$\bullet$] for $f: \mathbb T\times \mathbb R\to \mathbb T\times \mathbb R$ defined by $f(\theta,r)=(\theta, ar)$ (Example \ref{E5}), $B(f)=\mathbb T\times \{ 0\}$;
\item[$\bullet$] for  $H:\mathbb T\times \mathbb R\to \mathbb R$  defined by $H(\theta, r)=r\sin (2\pi \theta)$ and   $\alpha\in (0, 2\pi)$ (see Example \ref{E10}), let $f$ be the time-one map of the conformally Hamiltonian flow of $H$. Then $$B(f)=(\mathbb T\times \{0\})\cup (\{\frac{1}{2}\}\times \mathbb R)$$ is the union of the zero-section and the unstable sub-manifold of the saddle fixed point $(\frac{1}{2}, 0)$.
\end{itemize}
We conclude this section with a few remarks concerning the comparison between two concepts: conformally Hamiltonian vector fields and conformally symplectic vector fields on an exact symplectic manifold $(M, \omega=-d\lambda)$. 

Let us assume that $X$ is a  conformally symplectic vector field with conformality rate $\alpha$. Then $L_X\omega=\alpha\omega$, i.e. $d(i_X\omega +\alpha \lambda)=0$. Then the $1$-form $\eta =\frac{1}{\alpha}(i_X\omega+\lambda)$ is closed and $\tilde \lambda=\lambda -\eta$ is a primitive of $\omega$. We have
$$i_X\omega+\alpha\tilde \lambda=0.$$
Hence every conformally symplectic vector field is conformally Hamiltonian, and even the Liouville vector field for some primitive of $\omega$.

In Appendix B of \cite{ArnaudFejoz2024}, the following result is proven.
\begin{proposition}[Arnaud-F\'ejoz, \cite{ArnaudFejoz2024}] 
Let $X$ be a conformally symplectic vector field. Assume that the vector field $Y$ defined by
$$i_Y\omega=\frac{1}{\alpha}i_X\omega+\lambda$$ is complete. Then there exists a symplectically isotopic to identity diffeomorphism $g: M\to M$  such that $g^*X$ is conformally Hamiltonian for the same Liouville form $\lambda$.
\end{proposition}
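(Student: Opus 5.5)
The idea is to take $g$ to be a time map of the flow of $Y$ itself: $Y$ is built so that pulling back by its flow shifts the primitive $\lambda$ by exactly the closed form that separates $X$ from a conformally Hamiltonian field. Throughout, write
$$\eta=\frac{1}{\alpha}i_X\omega+\lambda .$$

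\emph{Step 1: $Y$ is symplectic and gives an isotopy.} As computed just before the statement, $d\eta=0$, since $d(i_X\omega+\alpha\lambda)=0$. Then $i_Y\omega=\eta$ is closed, so $L_Y\omega=d(i_Y\omega)=0$ and $Y$ is a symplectic vector field. Because $Y$ is complete, its flow $(\psi_s)_{s\in\mathbb R}$ consists of symplectic diffeomorphisms. I will take $g=\psi_{s_0}$ with $s_0=\pm1$, the sign to be fixed by the computation in Step 3. Then $s\mapsto\psi_{s s_0}$, $s\in[0,1]$, is a symplectic isotopy from the identity to $g$.

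\emph{Step 2: how $\eta$ and $\lambda$ change along the flow.} By Cartan's formula, $L_Y\eta=d(i_Y\eta)$ is exact. Integrating $\frac{d}{ds}\psi_s^*\eta=\psi_s^*L_Y\eta=d\bigl(\psi_s^*(\eta(Y))\bigr)$ gives
$$\psi_s^*\eta=\eta+d\Bigl(\int_0^s\psi_u^*(\eta(Y))\,du\Bigr),$$
so $\psi_s^*\eta-\eta$ is exact. Similarly,
$$L_Y\lambda=d(\lambda(Y))+i_Yd\lambda=d(\lambda(Y))-\eta .$$
Integrating and using the previous identity for $\psi_u^*\eta$ yields
$$\psi_s^*\lambda-\lambda=-s\,\eta+dF_s$$
for an explicit function $F_s$. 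Its regularity follows from that of $X$, $\lambda$ and the flow.

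\emph{Step 3: conclusion.} Since $g$ is symplectic, $i_{g^*X}\omega=g^*(i_X\omega)$. Using $i_X\omega=\alpha(\eta-\lambda)$ we get
$$i_{g^*X}\omega=\alpha\bigl(g^*\eta-g^*\lambda\bigr).$$
Substituting the identities of Step 2 with $s=s_0$ gives
$$i_{g^*X}\omega=-\alpha\lambda+\alpha(1+s_0)\eta+\text{exact}.$$
Choosing $s_0=-1$, i.e. $g=\psi_{-1}$, kills the $\eta$ term. Hence $i_{g^*X}\omega=-\alpha\lambda+dH$ for some function $H$. In the paper's normalization ($i_X\omega=\alpha\lambda+dH$, with the conformality rate correspondingly relabeled), this says exactly that $g^*X$ is conformally Hamiltonian for the same Liouville form $\lambda$.

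The step needing the most care is Step 2. One must check that integrating the exact forms $d(\psi_u^*f)$ over $u$ produces a globally defined function, differentiating under the integral sign, and that the result has the regularity required of $H$. One must also get the sign and the factor $s$ in $\psi_s^*\lambda-\lambda$ right. Completeness of $Y$ is used only to make $\psi_{-1}$ a globally defined diffeomorphism.
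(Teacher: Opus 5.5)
Your proof is correct. The paper does not reprove this statement and only cites Appendix B of \cite{ArnaudFejoz2024}; your argument, taking $g=\psi_{-1}$ in the symplectic flow of $Y$ so that $\psi_{-1}^*\lambda-\lambda=\eta+dF_{-1}$ and hence $i_{g^*X}\omega+\alpha\lambda=-\alpha\,dF_{-1}$, is the mechanism the completeness hypothesis on $Y$ is designed for. One small simplification: $\eta(Y)=\omega(Y,Y)=0$, so $L_Y\eta=0$ and $\psi_s^*\eta=\eta$ exactly, with $F_s=\int_0^s\lambda(Y)\circ\psi_u\,du$.
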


There exist conformally symplectic diffeomorphisms that are defined on a symplectic manifold that is not exact. Such an example is given in Proposition 2 of \cite{ArnaudFejoz2024}. 

The manifold $M$ in question is a 6-dimensional submanifold of the 8-dimensional manifold $T^*\mathbb T^4=\mathbb T^4\times \mathbb R^4$. If $p=\frac{\sqrt{5}-1}{2}$, then 
$$M=\{ (\theta_1, \theta_2, \theta_3, \theta_4, r_1, r_2, r_3, r_4)\in \mathbb T^4\times \mathbb R^4; r_2=pr_1\text{ and }r_4=pr_3\}.$$
We denote by $\Omega_1$ the 2-form $(d\theta_2-pd\theta_1)\wedge (d\theta_4-pd\theta_3)$ and by $\Omega_2$ the 2-form induced by the usual symplectic form of $T^*\mathbb T^4$ on $M$. Then $\Omega=\Omega_1+\Omega_2$ is a non-exact symplectic form on $M$. The map $F:M\to M$ defined by
$$F(\theta, r)=(2\theta_1+\theta_2, \theta_1+\theta_2, 2\theta_3+\theta_4, \theta_3+\theta_4, (3-\sqrt{5})/2.r)$$
is conformally symplectic for $\Omega$ with conformality ratio equal to $(7-3\sqrt{5})/2$.

\section{Isotropy}\label{sec:2}
In this section, we explain that the $\omega$-isotropy is preserved by any conformally symplectic dynamics. We then give various conditions that imply that a given invariant submanifold is isotropic. Particular attention is paid to the links between isotropy and
topological entropy.  We conclude by proving that any closed Lagrangian submanifold invariant
under a conformal Hamiltonian flow is exact Lagrangian. Note that Example \ref {ExArnFejnonisot} in section \ref{sec:3} is an example of  a non-isotropic invariant submanifold.

Throughout this section, we assume that $(M^{(2d)}, \omega)$ is a $2d$-dimensional symplectic manifold.
\begin{definition}
Let $L$ be a submanifold of $M$. Then
\begin{itemize}
\item $L$ is {\sl isotropic} if 
$$\forall x\in L, \forall u, v\in T_xL, \omega(u, v)=0.$$
\item $L$ is {\sl Lagrangian} if $L$ is isotropic with maximal dimension, i.e. $\dim L=d$.
\item when $\omega_{|TL}$ has constant rank, its kernel is integrable and the resulting (isotropic) foliation is called the {\sl characteristic foliation}.
\end{itemize}
\end{definition}
\begin{example}\phantom{fish}
\begin{itemize}
\item A curve is always isotropic. Moreover, when $d=1$, it is Lagrangian;
\item If $\eta$ is a 1-form on the closed manifold $N$, then the graph of $\eta$ is a Lagrangian submanifold of $T^*N$ if and only if $\eta$ is closed.
\end{itemize}
\end{example}

\begin{remark}
When $f:M\to M$ is conformally symplectic, then we have
$$\forall x\in M, \forall u, v\in T_xM, \omega(u,v)=0\Longrightarrow \omega(Dfu, Dfv)=0.$$
Hence a conformally symplectic dynamics 
\begin{itemize}
\item preserves $\omega$-orthogonality;
\item  maps an isotropic submanifold on an isotropic submanifol;
\item  maps a Lagrangian submanifold on a Lagrangian submanifold;
\item maps the characteristic foliation $\mathcal F$ of a submanifold $N$ on the characteristic foliation $f(\mathcal F)$ of $f(N)$.
\end{itemize}
\end{remark}
\begin{exercise}
Assume  that $d\geq 2$ and that $f:M\to M$ is a diffeomorphism. Then, if $f$ preserves $\omega$-orthogonality, $f$ is conformally symplectic (see Proposition 1.1 of \cite{LiveraniWojtkowski1998}).
\end{exercise}

In the symplectic setting, several results provide dynamical conditions implying that a submanifold is isotropic or Lagrangian. For example, we know that a stable or unstable (immersed) submanifold of a hyperbolic periodic orbit is Lagrangian. Herman also proved that, under certain conditions of exactness, a KAM torus is always Lagrangian, see 3.2. in \cite{Herman1989}.

Based on the examples in section \ref{sec:1}, we see that the zero-section of $T^*N$, which is invariant under the Liouville flow, is an invariant Lagrangian submanifold, which is also an attractor (see the next section for the definition of an attractor).

\begin{proposition}\label{Pstableunstable}
Assume that $f:M\to M$ is conformally symplectic such that $f^*\omega=a\omega$ with $a\in (0, 1)$. Let $x$ be a periodic point with period $m\geq 1$. Let us introduce the notations
$$\Lambda_-=\{ \lambda\in\text{Spec}Df^m(x); |\lambda|<\sqrt{a^m}\} \text{ and } \Lambda_+=\{ \lambda\in \text{Spec}(Df^m(x)); |\lambda|>1\}.$$
We choose $b, c\in \mathbb R$ such that $\sup|\Lambda_-|<b<\sqrt{a^m}<1<c<\inf|\Lambda_+|$. Then $$W^-=\{ y\in M; \lim_{n\to\infty} \frac{1}{b^n}d(f^nx, f^ny)=0\}$$ and $$W^+=\{ y\in M; \lim_{n\to\infty}c^nd(f^{-n}y, f^{-n}x)=0\}$$ are isotropic immersed submanifold of $M$ that are invariant by $f^m$.
\end{proposition}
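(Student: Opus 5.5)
Replacing $f$ by $g=f^m$ (again conformally symplectic, with ratio $a^m$) and $x$ by a fixed point, we may assume $m=1$. The fact that $W^-$ and $W^+$ are injectively immersed submanifolds invariant under $g$ is the classical (pseudo-)stable/unstable manifold theorem for a fixed point with a spectral gap; we take it as given. Note that $W^-$ is a strong stable manifold associated with the part of the spectrum of modulus $<b$. $W^+$ is the strong unstable manifold for the spectrum of modulus $>c$. It is tangent at $x$ to the sum $E^+$ of generalized eigenspaces of $\Lambda_+$, and similarly $W^-$ is tangent to $E^-$. Both are $C^1$, and $W^\pm=\bigcup_n g^{\pm n}(W^\pm_{loc})$. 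So the real content is isotropy, which we prove separately for each manifold by a growth/decay argument on $\omega$.

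\textbf{Isotropy of $W^-$.} Let $y\in W^-$ and $u,v\in T_yW^-$. By invariance, $\omega(Dg^n u,Dg^n v)=a^n\omega(u,v)$. Since $g^n y\to x$ and the vectors $Dg^n u$ are tangent to $W^-$, the standard estimate from the stable manifold theorem gives $\|Dg^n u\|\le C b'^n\|u\|$ for some $b'<\sqrt a$. We can take $b'$ with $\sup|\Lambda_-|<b'<b$, using a uniform bound on $Dg|_{TW^-_{loc}}$ near $x$ in an adapted norm. Also $|\omega|$ is bounded on a compact neighbourhood of $x$. Hence
\[
a^n|\omega(u,v)|\le C' b'^{2n}\|u\|\,\|v\|,
\]
so $|\omega(u,v)|\le C'(b'^2/a)^n\to0$. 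Since $b'^2<a$, we get $\omega(u,v)=0$.

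\textbf{Isotropy of $W^+$.} Now iterate backward. Here $\omega(Dg^{-n}u,Dg^{-n}v)=a^{-n}\omega(u,v)$, while vectors tangent to $W^+$ contract under $Dg^{-n}$ like $c'^{-n}$ with $1<c'<\inf|\Lambda_+|$. Then
\[
|\omega(u,v)|\le C a^n c'^{-2n}\to0,
\]
because $a<1<c'^2$.

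The main obstacle is justifying the uniform tangent-vector estimates along the whole immersed manifold, not just at $x$. We handle it by working first on the local manifold, where $TW^\pm_{loc}$ is $C^0$-close to $E^\pm$ and an adapted norm gives the contraction rates. We then extend to all of $W^\pm$ by invariance: if $g^N(W^-_{loc})$ covers a given point, its tangent space is $Dg^{N}$ of a tangent space of $W^-_{loc}$. Conformal symplecticity then transports isotropy, since $\omega(Dg^N u,Dg^N v)=a^N\omega(u,v)$.
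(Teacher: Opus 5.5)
Your proof is correct and follows the paper's argument. You write $\omega(u,v)=a^{-n}\omega(Dg^nu,Dg^nv)$ along the forward orbit and use that $Dg^n$ contracts vectors tangent to $W^-$ near $x$ at a rate $b'<\sqrt{a}$ in an adapted norm, so the factor $(b'^2/a)^n$ forces $\omega(u,v)=0$, exactly as the paper does with $f^{nm}$ and $\|Df^m(x)_{|E_-}\|<b$. You also spell out the backward-iteration case $W^+$, which the paper leaves to the reader. The only slip is harmless: points of the global $W^-$ lie in $g^{-N}(W^-_{\mathrm{loc}})$, not in $g^{N}(W^-_{\mathrm{loc}})$.
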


\begin{proof}[Proof of Proposition \ref{Pstableunstable}] Let $E_\pm$ be the sum of the characteristic subspaces of $T_x(T^*M)$ corresponding to the eigenvalues of $\Lambda_\pm$. It is classical that $W^\pm$ is an immersed submanifold such that $T_xW^\pm=E_\pm$ see Exercise 6.11 of \cite{Irwin2001}. 
We prove the isotropy of $W^-$.

Let $y\in W^-$ and $u, v\in T_yW^-$. Then the sequence $(f^{nm}y)_{n\in\mathbb N}$ converges to $x$ and $\omega(u, v)=\frac{1}{a^{nm}}\omega(Df^{nm}u, Df^{nm}v)$ where 
$$Df^{nm}(y)=Df^m(f^{(n-1)m}y)\dots Df^m(f^my)Df^m(y).$$
As $(f^{nm}y)_{n\in\mathbb N}$ converges to $x$  and $\| Df^m(x)_{|E_-}\|<b<\sqrt{a^m}$, then
$$\omega(u, v)=\vert \frac{1}{a^{nm}}\omega(Df^{nm}u, Df^{nm}v)\vert\leq \text{Constant}.(\frac{b^{2}}{a^{m}})^n\| u\|.\| v\|$$
where $\lim_{n\to\infty}(\frac{b^{2}}{a^{m}})^n=0$. We deduce that $W^-$ is isotropic.

\end{proof}

\begin{remark}
For a flow, we need to add the direction of the vector field in $E_\pm$. The resulting submanifolds are also isotropic.
\end{remark}

\begin{proposition}[Arnaud-Fejoz] \label{Pdegeneratesubmanifold}  Assume that $f:M\to M$ is such that $f^*\omega=a\omega$ with $a\in (0, 1)$. Let $N\subset M$ be a closed $f$-invariant submanifold. Then $\omega_{|TN}$ is degenerate at every point.
\end{proposition}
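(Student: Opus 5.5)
Suppose, for contradiction, that at some point $x_0\in N$ the form $\omega_{|T_{x_0}N}$ is non-degenerate. Non-degeneracy is an open condition, so it holds on a neighbourhood of $x_0$ in $N$. The plan is to show that the set where $\omega_{|TN}$ is non-degenerate is an invariant open subset of $N$ carrying a finite volume form, and then to contradict the volume contraction. Here "closed submanifold" means compact without boundary, so $N$ is compact.

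The first step is to set up the invariant set. Let $U\subset N$ be the set of points where $\omega_{|TN}$ is non-degenerate; $U$ is open. Since $f^*\omega=a\omega$ and $f(N)=N$, the restriction $f_{|N}$ satisfies $(f_{|N})^*(\omega_{|TN})=a\,\omega_{|TN}$. As $Df$ is an isomorphism, $U$ is $f$-invariant, and $U$ is non-empty by assumption. On $U$ the dimension of $N$ is even, say $2k$ (we work on a connected component of $N$ meeting $U$), and $\nu=(\omega_{|TN})^{\wedge k}$ is a volume form on $U$ satisfying $f^*\nu=a^k\nu$.

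The second step is to get finiteness and positivity of the $\nu$-volume of $U$. Note that $\nu$ is defined and continuous on all of $N$ as a $2k$-form, merely vanishing outside $U$, and $N$ is compact. Hence $\int_N|\nu|<\infty$, and this integral is positive because $\nu\neq 0$ on the non-empty open set $U$. The same holds for $\int_U|\nu|$.

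The third step is the contradiction. Since $f_{|N}$ is a diffeomorphism of $N$ onto itself, the change of variables formula gives
$$\int_N|\nu|=\int_N|f^*\nu|=a^k\int_N|\nu|.$$
Because $0<a^k<1$ (as $k\geq 1$), this forces $\int_N|\nu|=0$, contradicting the second step. Equivalently, this is the earlier Lemma that invariant measurable sets have zero or infinite volume, applied to the conformally symplectic map $f_{|U}$ of the symplectic manifold $(U,\omega_{|TU})$, with $U$ of finite volume.

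The only delicate point is the second step: ensuring that the volume is finite, which is why working with $\nu$ on the compact manifold $N$, rather than on $U$ alone, matters. One also needs to check that $f$ preserves the connected component used. If it does not, replace $f$ by the power $f^j$ that maps the component to itself; this exists because $N$ has finitely many components, and $f^j$ is conformally symplectic with ratio $a^j\in(0,1)$, so the argument is unchanged.
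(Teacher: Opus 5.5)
Your proof is correct and is essentially the paper's argument: both play the finite total mass $\int_N|\omega^{\wedge k}|$ on the compact manifold $N$ against the scaling $f^*\omega^{\wedge k}=a^k\omega^{\wedge k}$. The paper iterates $f^{-n}$ on a small open set where $\omega^{\wedge k}\neq 0$ until its mass exceeds $\int_N\|\omega^{\wedge k}\|$, whereas you apply the change of variables once to the density $|\nu|$ on an $f$-invariant component (after passing to a power of $f$); this is a one-step version of the same idea and also avoids the orientation signs.
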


\begin{corollary} Assume that $f:M\to M$ is such that $f^*\omega=a\omega$ with $a\in (0, 1)$. Let $S\subset M$ be a closed $f$-invariant surface. Then $S$ is isotropic.
\end{corollary}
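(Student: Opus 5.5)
The plan is to deduce the corollary directly from Proposition \ref{Pdegeneratesubmanifold}, using the fact that on a $2$-dimensional vector space any alternating bilinear form is either zero or non-degenerate.

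First, let $S\subset M$ be a closed $f$-invariant surface. Proposition \ref{Pdegeneratesubmanifold} applies with $N=S$, so for every $x\in S$ the restriction $\omega_{|T_xS}$ is degenerate.

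Next, fix $x\in S$ and a basis $(u,v)$ of $T_xS$. An alternating bilinear form $\beta$ on $T_xS$ is determined by the single number $\beta(u,v)$. If $\beta(u,v)\neq 0$, then $\beta$ is non-degenerate: for a nonzero $w=su+tv$ we have $\beta(w,v)=s\beta(u,v)$ and $\beta(w,u)=-t\beta(u,v)$, and one of these is nonzero. Hence degeneracy of $\omega_{|T_xS}$ forces $\omega(u,v)=0$, and therefore $\omega_{|T_xS}=0$.

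Since $x\in S$ was arbitrary, $\omega$ vanishes on every tangent plane of $S$, i.e.\ $S$ is isotropic. There is no real obstacle: all the content sits in Proposition \ref{Pdegeneratesubmanifold}, and the only point to check is that its hypotheses are met, namely that $S$ is a closed $f$-invariant submanifold and $a\in(0,1)$.
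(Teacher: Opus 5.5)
Your proof is correct and follows the paper's route exactly. The paper states the corollary as an immediate consequence of Proposition~\ref{Pdegeneratesubmanifold}, and you supply the one missing step, that an alternating bilinear form on a $2$-dimensional space which is degenerate must vanish.
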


\begin{proof}[Proof of Proposition \ref{Pdegeneratesubmanifold} ] When $\dim N$ is odd, there is nothing to prove. If $\dim N=2k$, assume that there is some open subset $U$ of $N$ on which $\omega^{\wedge k}>0$. Then $\int_U\omega^{\wedge k}>0$. and 
$$\int_{f^{-n}U}\omega^{\wedge k}=\frac{1}{a^n}\int_U\omega^{\wedge k} \stackbin[n\to \infty]{\longrightarrow}{}+\infty.$$
But we have 
$$\vert \int_{f^{-n}U}\omega^{\wedge k}\vert\leq \int_N\| \omega^{\wedge k}\|<+\infty,$$
thus a contradiction.

\end{proof}
\begin{proposition}[Calleja, Celletti, de la Llave, \cite{CallejaCellettiLlave2013}]\label{PCCdL2013}
Assume that $f:M\to M$ is such that $f^*\omega=a\omega$ with $a\in (0, 1)$. Then, if $j: \mathbb T^m\hookrightarrow M$ is a $C^1$ embedded torus that is $f$-invariant and such that $f_{|j(\mathbb T^m)}$ is $C^1$ conjugate to a rotation, then $j(\mathbb T^m)$ is isotropic.
\end{proposition}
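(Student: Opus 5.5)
Pull the form back to the torus and use the conjugacy to turn the problem into a statement about a rotation. Let $h:\mathbb T^m\to\mathbb T^m$ be the $C^1$ diffeomorphism with $f\circ j\circ h = j\circ h\circ R$, where $R(\theta)=\theta+\rho$ is the rotation. Replacing $j$ by $j\circ h$, which is still a $C^1$ embedding with the same image, we may assume $f\circ j=j\circ R$. Set $\beta=j^*\omega$. This is a continuous $2$-form on $\mathbb T^m$; it is closed in the weak sense, but we will not need closedness. Write it as
$$\beta_\theta=\sum_{i<k}\beta_{ik}(\theta)\,d\theta_i\wedge d\theta_k,$$
with continuous coefficients $\beta_{ik}$. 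Isotropy of $j(\mathbb T^m)$ means exactly $\beta=0$.

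The invariance gives a functional equation. From $f\circ j=j\circ R$ we get
$$R^*\beta=j^*f^*\omega=a\,\beta.$$
Since $DR$ is the identity in the standard coordinates of $\mathbb T^m$, this reads $\beta_{ik}(\theta+\rho)=a\,\beta_{ik}(\theta)$ for every $\theta$ and every $i<k$. Iterating, $\beta_{ik}(\theta+n\rho)=a^n\beta_{ik}(\theta)$ for all $n\in\mathbb Z$.

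Now use compactness. Each $\beta_{ik}$ is continuous on the compact torus, hence bounded, say $|\beta_{ik}|\le C$. Taking $n=-N$ gives
$$|\beta_{ik}(\theta)|=a^{N}|\beta_{ik}(\theta-N\rho)|\le Ca^N,$$
which tends to $0$ as $N\to\infty$ because $a\in(0,1)$. Hence $\beta\equiv 0$ and $j(\mathbb T^m)$ is isotropic.

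An alternative is to take sup norms: $\|\beta\|_\infty=\|R^*\beta\|_\infty=a\|\beta\|_\infty$, which forces $\|\beta\|_\infty=0$. The only real point is that the conjugacy is $C^1$, so that pulling back $\omega$ along it gives a bounded continuous form, and that a rotation preserves the sup norm of forms. A merely topological conjugacy would not allow this pullback. Neither Diophantine conditions nor minimality of the rotation are needed. In fact the argument shows that any compact $f$-invariant submanifold on which $f$ is $C^1$ conjugate to an isometry is isotropic.
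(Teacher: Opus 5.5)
Your proof is correct and follows essentially the same route as the paper: pull $\omega$ back to $\mathbb T^m$, use the conjugacy to get $\beta_{ik}(\theta+\rho)=a\,\beta_{ik}(\theta)$, and conclude that the coefficients vanish. You also make explicit two points the paper leaves implicit: replacing $j$ by $j\circ h$ so that the conjugacy is realized by the embedding itself, and the compactness estimate $|\beta_{ik}(\theta)|\le Ca^N\to 0$ that actually forces $\beta_{ik}\equiv 0$.
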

\begin{proof}[Proof of Proposition \ref{PCCdL2013}] We use the notations $\theta=(\theta_1, \dots, \theta_m)\in \mathbb T^m$ and $R_\beta:\mathbb T^m\to \mathbb T^m$ such that $R_\beta(\theta)=\theta+\beta$.

We introduce 
$$\omega_0=j^*\omega=\sum_{i<j}b_{i, j}(\theta)d\theta_i\wedge d\theta_j.$$ Then we have $j^{-1}\circ f\circ j=R_\beta$ so 
$$\sum_{i<j} b_{i, j}(R_\beta\theta)d\theta_i\wedge d\theta_j=R_\beta^*\omega_0=a\omega_0=\sum_{i<j}ab_{i, j}(\theta)d\theta_i\wedge d\theta_j.$$
Hence $$\forall i, j , \forall \theta, b_{i, j}(R_\beta \theta)=ab_{i, j}(\theta),$$
which implies that every function $b_{i, j}$ is the zero function.

\end{proof}
The concept of {\sl topological entropy} was introduced by Adler, Konheim \& McAndrew, \cite{AdlerKonheimMcAndrew1965}. This concept measures the degree to which dynamics are chaotic.
For any homeomorphism $h$ of a closed manifold $N$, its {topological entropy} is denoted  $\text{Ent}(h)$.  We will not give a precise definition, but simply recall that if $(U_i)_{1\leq i\leq m}$ is an open covering  of $N$, then each piece of orbit $(h^ix)_{0\leq i\leq n}$ has at least one $(n+1)$-itinerary $(n_i(x))_{0\leq i \leq n}\in [0, m]^{n+1}$ such that $h^i(x)\in U_{n_i(x)}$. If $N_{N+1}$ is the minimal number of possible $(n+1)$-itineraries for all points, then $\lim_{n\to \infty} \frac{1}{n}\log N_n$ exists and is at most equal to $\text{Ent}(h)$. Furthermore,  recall that $\text{Ent}(h)=\text{Ent}(h^{-1})$

Yomdin theory, \cite{Yomdin1987,Gromov1987}, etablishes  a link between the topological entropy of a dynamics and the expansion of the Riemannian volume of the images of the submanifolds. The Riemannian volume can be compared to the symplectic form. There is therefore a direct consequence of Yomdin theory.

\begin{proposition}[Arnaud-F\'ejoz, \cite{ArnaudFejoz2024}]  Assume that $f:M\to M$ is a diffeomorphism such that $f^*\omega=a\omega$ with $a\in (0, 1)$. Let $L\subset M$ be a $f$-invariant $C^\infty$ closed submanifold such that $\text{Ent}(f_{|L})<|\log a|$. Then $L$ is isotropic.

\end{proposition}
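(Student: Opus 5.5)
We argue by contradiction and combine the exponential growth of the symplectic area of small $2$-dimensional pieces of $L$ under $f^{-1}$ with Yomdin's volume-growth bound. Since $\text{Ent}(f_{|L})=\text{Ent}(f^{-1}_{|L})$, it suffices to bound the growth of volumes under $f^{-1}$. Assume $L$ is not isotropic. Then there is a point $x\in L$ and $u,v\in T_xL$ with $\omega(u,v)\neq 0$. We choose a small embedded $C^\infty$ disc $D\subset L$ of dimension $2$, tangent to $\text{span}(u,v)$ at $x$, such that $\omega_{|TD}$ is nondegenerate on $D$. After orienting $D$, we get $\int_D\omega=c>0$.

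First step: growth of symplectic area. Since $(f^{-n})^*\omega=a^{-n}\omega$, we have
$$\int_{f^{-n}(D)}\omega=\int_D (f^{-n})^*\omega=a^{-n}c.$$
Fix a Riemannian metric on $M$. On the compact set $L$ there is a constant $C>0$ such that $|\omega(u,v)|\leq C\|u\|\,\|v\|$, so $|\omega_{|T\Sigma}|\leq C\,\text{vol}_\Sigma$ for any surface $\Sigma\subset L$. Because $L$ is invariant, $f^{-n}(D)\subset L$. Hence
$$\text{Area}_{\text{Riem}}(f^{-n}(D))\geq \frac{c}{C}\,a^{-n},$$
which gives $\limsup_n\frac1n\log\text{Area}(f^{-n}D)\geq|\log a|$.

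Second step: Yomdin's bound. We apply Yomdin's theorem \cite{Yomdin1987,Gromov1987} to the $C^\infty$ diffeomorphism $g=f^{-1}_{|L}$ of the closed manifold $L$. For $C^\infty$ maps there is no correction term, and the volume growth of any $C^\infty$ compact submanifold (with boundary) $\Sigma$ of dimension $k$ satisfies
$$\limsup_n\frac1n\log\text{vol}_k(g^n\Sigma)\leq \text{Ent}(g)=\text{Ent}(f_{|L}).$$
Taking $\Sigma=D$ and $k=2$ gives $|\log a|\leq \text{Ent}(f_{|L})$, which contradicts the hypothesis. Hence $\omega_{|TL}\equiv0$ and $L$ is isotropic.

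The main obstacle is invoking Yomdin's theorem correctly. The smoothness of $L$ and $f$ is essential, since for finite regularity $C^r$ there is an additional $\frac{\dim}{r}\log\text{Lip}$ term. One must also use the form of the theorem that bounds the growth of the volume of a fixed disc, not only the growth of the total derivative; this is the Yomdin--Gromov statement. The remaining points need only brief justification: restricting to $L$ is legitimate, a disc with nonzero symplectic area exists by continuity of $\omega$, and the comparison constant $C$ between $\omega$ and Riemannian area is uniform because $L$ is compact.
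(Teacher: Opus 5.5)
Your proof is correct and follows the route the paper indicates. The paper gives no detailed argument; it says only that the statement is a direct consequence of Yomdin's volume-growth bound once the Riemannian area of pieces of $L$ is compared with their symplectic area, and your proof carries this out exactly: you combine $\int_{f^{-n}D}\omega=a^{-n}\int_D\omega$ with $\limsup_n\frac1n\log\text{Area}(f^{-n}D)\leq\text{Ent}(f^{-1}_{|L})=\text{Ent}(f_{|L})$. As you noted, the argument needs $f$ (hence $f_{|L}$) to be $C^\infty$; the statement leaves this implicit.
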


Therefore,  a small disorder (small topological entropy) of $f_{|L}$ implies the isotropy of $L$.

\begin{corollary}
With the same assumptions, when $\text{Ent}(f_{|L})=0$, for example when $f_{|L}$ is $C^0$ conjugate to a rotation, then $L$ is isotropic.
\end{corollary}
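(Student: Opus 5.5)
The corollary follows from the preceding proposition of Arnaud--F\'ejoz. The point is to check that its entropy hypothesis holds. Since $a\in(0,1)$, we have $|\log a|>0$. So if $\text{Ent}(f_{|L})=0$, the strict inequality $\text{Ent}(f_{|L})<|\log a|$ is automatic. The proposition then applies verbatim, with the same standing assumptions: $L$ is a closed $C^\infty$ $f$-invariant submanifold and $f^*\omega=a\omega$. Hence $L$ is isotropic.

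It remains to justify the parenthetical example: if $f_{|L}$ is $C^0$ conjugate to a rotation, then $\text{Ent}(f_{|L})=0$. I would argue in two steps.

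First, topological entropy is a topological conjugacy invariant. If $h^{-1}\circ f_{|L}\circ h=R_\beta$ with $h$ a homeomorphism, then $\text{Ent}(f_{|L})=\text{Ent}(R_\beta)$. This holds because $h$ carries open covers to open covers and preserves itineraries, so the counting numbers $N_n$ recalled above coincide.

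Second, a rotation (more generally, any isometry) of a compact metric space has zero entropy. For the translation $R_\beta$ on $\mathbb T^m$ with the flat metric, the Bowen metric $d_n(x,y)=\max_{0\le i\le n}d(R_\beta^ix,R_\beta^iy)$ equals $d(x,y)$. The number of $(n,\varepsilon)$-separated points is therefore bounded independently of $n$, and $\frac1n\log$ of it tends to $0$. Alternatively, using open covers: for a cover by $\varepsilon$-balls, the refined cover $\bigvee_{i\le n}R_\beta^{-i}\mathcal U$ is refined by a fixed $\varepsilon/2$-cover. The cardinality stays bounded, and again the entropy vanishes.

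There is no real obstacle here. The only point requiring care is that the paper has not given a formal definition of entropy, so the invariance under conjugacy and the vanishing for isometries should be quoted as standard facts, for instance from \cite{AdlerKonheimMcAndrew1965}.
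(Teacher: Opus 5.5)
Your proposal is correct and matches the paper's implicit argument. Since $a\in(0,1)$ gives $|\log a|>0$, the hypothesis $\text{Ent}(f_{|L})<|\log a|$ of the preceding Arnaud--F\'ejoz proposition holds automatically when the entropy vanishes. Your extra justification that a map $C^0$ conjugate to a rotation has zero entropy (invariance of entropy under conjugacy, plus zero entropy for isometries) is the standard fact the paper takes for granted.
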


Without resorting to Yomdin theory and by weakening the hypotheses concerning the smoothness of the invariant submanifold, we have the following theorem.
\begin{theorem}[Arnaud-Fejoz,  \cite{ArnaudFejoz2024}]\label{Tentconstrank} Let $N^{(n)}$ be a closed Riemannian $C^2$ manifold and\begin{itemize}
\item let $\mathcal F$ be a $C^2$ foliation induced by a subbundle $F$ of $TN$ of rank $p\leq n-1$;
\item let $\Omega$ be a $(n-p)$-form on $N$ which induces a volume form on submanifolds that are transversal to $\mathcal F$;
\item let $f$ be a $C^2$ diffeomorphism of $N$ preserving $\mathcal F$ such that $f^*\Omega=b\Omega$ for some $b>1$.
\end{itemize}
Then $\text{Ent}(f)\geq \log b$.
\end{theorem}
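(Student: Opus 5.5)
We prove $\text{Ent}(f)\geq \log b$ by showing that $f$ expands the $\Omega$-volume of small transversal discs exponentially at rate $b$, while each such disc stays "thin" along $\mathcal F$. We then convert this growth into many distinguishable orbit segments, in the spirit of the Bowen/Misiurewicz argument, but replacing Yomdin's estimate by the foliated structure.

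\textbf{Step 1: a good local structure.} By compactness of $N$ and the $C^2$ regularity of $\mathcal F$, we cover $N$ by finitely many foliated charts $U_1,\dots,U_m$. In each chart $\mathcal F$ is given by the plaques $\{y=\text{const}\}$ with $y\in\mathbb R^{n-p}$. Let $\pi_i:U_i\to \mathbb R^{n-p}$ be the transversal projection. Since $\Omega$ is a volume form on transversals, in each chart $\Omega$ restricted to a transversal $D$ has the form $\pi_i^*(\rho_i\, dy)$ up to an error vanishing along $F$. More precisely, the pullback of $\Omega$ to any submanifold transverse to $\mathcal F$ equals $\pm\pi_i^*(\rho_i dy)$ with $\rho_i$ bounded above and below. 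Hence $|\int_D\Omega|\leq C\,\text{Leb}(\pi_i(D\cap U_i))\leq C'$ for any transversal disc $D$ meeting each plaque of $U_i$ at most once. The key point is that the $\Omega$-mass of a transversal piece inside a chart is bounded by a uniform constant $C_0$, independent of how wiggly the piece is.

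\textbf{Step 2: volume growth and counting.} Fix a small transversal disc $D_0$ with $\int_{D_0}\Omega=v_0>0$. Because $f$ preserves $\mathcal F$, each $f^k(D_0)$ is still transversal, and $\int_{f^k D_0}\Omega=b^k v_0$. We say two points $x,x'\in D_0$ have the same $(n+1)$-itinerary when $f^kx,f^kx'$ lie in the same $U_{i_k}$ for $0\leq k\leq n$. Partition $D_0$ into sets $D_w$ indexed by itineraries $w$, after choosing a Lebesgue-number refinement so that each $f^k(D_w)\subset U_{i_k}$. We need the estimate $\int_{f^nD_w}\Omega\leq C_0$, i.e. that $f^n(D_w)$ meets each plaque of $U_{i_n}$ at most once, or at least projects with bounded multiplicity. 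Granting this, we get $b^n v_0=\sum_w\int_{f^nD_w}\Omega\leq C_0\cdot\#\{w\}$. So the number of realized itineraries is at least $b^n v_0/C_0$, and taking $\frac1n\log$ gives $\text{Ent}(f)\geq\log b$ via the covering characterization recalled before the statement (made rigorous with $(n,\varepsilon)$-separated sets: points with distinct itineraries relative to a fine cover are separated, after handling overlaps by the usual Lebesgue-number trick).

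\textbf{Main obstacle: injectivity of the transversal projection.} The difficulty is the bounded multiplicity in Step 2: $f^n(D_w)$ could a priori wrap around and hit the same plaque twice inside $U_{i_n}$. To handle this, I would exploit that orbits sharing an itinerary of length $n$ shadow each other. I would choose the cover so fine that each $U_i$ is much smaller than the injectivity scale of plaques. Then I would argue inductively in $k$ that $f^k(D_w)$ is a graph over its transversal projection in $U_{i_k}$. Suppose two points of $f^{k+1}(D_w)$ were on the same plaque; pulling back by $f^{-1}$, which preserves $\mathcal F$, would make their preimages lie on the same leaf and within distance $\text{Lip}(f^{-1})\cdot\text{diam}(U_{i_{k+1}})$. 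With the charts chosen small relative to this Lipschitz constant, that would force them onto the same local plaque of $U_{i_k}$, contradicting the induction hypothesis. If this graph argument resists, the fallback is to replace $\int\Omega$ by $|\int|$ on each $D_w$ and bound the multiplicity by a constant depending only on the cover, which still yields $\log b$ after taking $\frac1n\log$.
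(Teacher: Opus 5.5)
Your architecture is the paper's: a small transversal disc $D_0$, the identity $\Omega(f^nD_0)=b^n\Omega(D_0)$, a decomposition of $D_0$ by itineraries, a bound $C_0$ independent of $n$ on the $\Omega$-mass of $f^n$ of each piece, and hence $b^n\Omega(D_0)\le C_0$ times the number of itineraries. The paper's sketch only asserts the uniform bound. Your plan to prove it by induction, showing that $f^k$ of an itinerary piece meets each plaque of $U_{i_k}$ at most once, is a sound way to obtain it. The resulting estimate by $\sup\rho_i\cdot\mathrm{Leb}(\pi_i(U_i))$ is legitimate: non-vanishing of $\Omega$ on \emph{every} plane transverse to $F$ forces $i_v\Omega=0$ for $v\in F$, because on the graph of $A\in\mathrm{Hom}(\mathbb R^{n-p},\mathbb R^p)$, $\Omega$ is a polynomial of degree at most one in each entry of $A$ that never vanishes, hence is constant.

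However, the inductive step fails as written. Two points of the same leaf at small \emph{ambient} distance need not lie on the same plaque, however fine the charts, because leaves may be dense (think of an irrational linear foliation of $\mathbb T^2$). So there is no ``injectivity scale of plaques''. What you must use is that $f^{-1}$ maps the whole plaque $P\ni z,z'$ onto a \emph{leafwise connected} set of diameter at most $\mathrm{Lip}(f^{-1})\,\mathrm{diam}(P)$. Take each $U_i$ to be a sub-box of a larger foliated chart $\hat U_i$ whose interior contains the $\mathrm{Lip}(f^{-1})\delta$-neighbourhood of $U_i$, where $\delta$ bounds the diameters of the $U_i$ (subdivide finitely many fixed charts). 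Then $f^{-1}(P)$ lies in $\hat U_{i_k}$, hence in a single plaque of $\hat U_{i_k}$. So $f^{-1}z$ and $f^{-1}z'$ lie on one plaque of $U_{i_k}$, and the induction closes. For the base case, choose $D_0$ so small that it is a graph in every chart it meets.

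The passage to entropy also does not follow as written. The characterization recalled in the paper bounds $\mathrm{Ent}(f)$ from below by the growth of the \emph{minimal} number $N_{n+1}$ of $(n+1)$-itineraries of an open cover needed to cover all points. A lower bound on the number of \emph{realized} cells of a partition says nothing about $N_{n+1}$; if anything, that number dominates the covering number of $D_0$. Nor are points with different partition itineraries necessarily $(n,\varepsilon)$-separated, since they may sit on both sides of a boundary.

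Your own estimate repairs this at no cost. Choose a minimal subcover $\{U_w\}_{w\in W}$ of $\bigvee_{k=0}^n f^{-k}\mathcal U$. The graph argument only uses $f^k(D_0\cap U_w)\subset U_{w_k}$, so subadditivity gives $b^nv_0\le\sum_{w\in W}\Omega\bigl(f^n(D_0\cap U_w)\bigr)\le C_0N_{n+1}$, whence $\mathrm{Ent}(f)\ge\log b$. With these two corrections your proposal is a complete proof along the lines of the paper's sketch, which is itself loose about partitions versus covers at this point.
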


\begin{corollary}[Arnaud-Fejoz,  \cite{ArnaudFejoz2024}]\label{Centconstrank} Let $f:M\to M$ be a $C^3$ conformally symplectic diffeomorphism such that $f^*\omega=a\omega$ with $a\in (0, 1)$. Assume that $N$ is a $C^3$ invariant submanifold such that the induced form $\omega_{|TN}$ has constant rank. Then
$$\text{Ent}(f_{|N})\geq \frac{\text{rank} (\omega_{|TN})}{2}|\log a|.$$
In particular, if the entropy of $f_{\vert N}$ vanishes, then $N$ is isotropic.

\end{corollary}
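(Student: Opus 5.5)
We reduce Corollary \ref{Centconstrank} to Theorem \ref{Tentconstrank}, applied to $f^{-1}$ restricted to $N$. Let $2k=\text{rank}(\omega_{|TN})$ and $n=\dim N$.

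First, we set up the foliation. Since $\omega_{|TN}$ has constant rank $2k$, its kernel $F_x=\{u\in T_xN;\ \forall v\in T_xN,\ \omega(u,v)=0\}$ is a subbundle of $TN$ of rank $p=n-2k$. It is integrable because $\omega_{|N}$ is closed: for $X,Y$ sections of $F$ and $Z$ tangent to $N$, the formula for $d\omega(X,Y,Z)$ shows that $\omega([X,Y],Z)=0$. Here $N$ is $C^3$, so $\omega_{|TN}$ is $C^2$ and $F$ is a $C^2$ subbundle. The characteristic foliation $\mathcal F$ is therefore $C^2$ (possibly after checking regularity carefully; this is where the $C^3$ assumption is used). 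By the remark on orthogonality preservation, $f(N)=N$ and $Df$ maps $\ker(\omega_{|TN})$ onto itself, so $f_{|N}$ preserves $\mathcal F$. If $k=0$ there is nothing to prove, so we assume $k\ge 1$, which gives $p\le n-1$.

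Next, we choose the transverse form and the map. Take $\Omega=(\omega_{|TN})^{\wedge k}$, an $(n-p)=2k$-form on $N$. For a submanifold $\Sigma$ of dimension $2k$ transversal to $\mathcal F$, we have $T_x\Sigma\oplus F_x=T_xN$, and $\omega_{|T\Sigma}$ is non-degenerate because $\omega$ descends to a non-degenerate form on $T_xN/F_x\cong T_x\Sigma$. Hence $\Omega$ induces a volume form on $\Sigma$. Now $f^*\Omega=a^k\Omega$, so $g=(f_{|N})^{-1}$ satisfies $g^*\Omega=a^{-k}\Omega$ with $b=a^{-k}>1$. The map $g$ is a $C^2$ diffeomorphism of the closed $C^2$ manifold $N$ that preserves $\mathcal F$.

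Finally, we conclude. Theorem \ref{Tentconstrank} gives $\text{Ent}(g)\ge k|\log a|$. Since $\text{Ent}(g)=\text{Ent}(f_{|N})$, we obtain $\text{Ent}(f_{|N})\ge \frac{\text{rank}(\omega_{|TN})}{2}|\log a|$. If the entropy vanishes, then $k=0$, so $\omega_{|TN}=0$ and $N$ is isotropic. The main point requiring care is the regularity of the foliation and of $\Omega$ needed to meet the $C^2$ hypotheses of the theorem: a $C^3$ submanifold gives a $C^2$ tangent bundle, and the kernel bundle of a constant-rank $C^2$ form is $C^2$. Everything else is linear algebra.
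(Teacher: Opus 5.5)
Your proposal is correct and follows the paper's own proof: Theorem \ref{Tentconstrank} is applied to $f_{|N}^{-1}$, with $\mathcal F$ the characteristic foliation of $\omega_{|TN}$, $\Omega=(\omega_{|TN})^{\wedge k}$ where $2k=\text{rank}(\omega_{|TN})$, and $b=a^{-k}$. The extra checks you include (integrability and $C^2$ regularity of the kernel, its invariance under $Df$, non-degeneracy of $\omega$ on transversals, and the separate case $k=0$ needed for $p\leq n-1$) are details the paper's one-line proof leaves implicit.
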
 
\begin{proof}[Proof of Corollary \ref{Centconstrank}] We use Theorem \ref{Tentconstrank} to $f_{|N}^{-1}$, with $\mathcal F$ being the characteristic foliation of $\omega_{|TN}$ with rank denoted by $p$, $\Omega=\omega^{\wedge k}$ where $k=\frac{\dim N-p}{2}$ and $b=\frac{1}{a^k}$.

\end{proof}
\begin{remark}
Hypotheses of Corollary \ref{Centconstrank}  are satisfied when $N$ is a $C^3$ invariant submanifold such that $f_{\vert N}$ is minimal (every orbit is dense), in particular, when $f_{|N}$ is $C^0$ conjugate to an ergodic rotation.
\end{remark}
\begin{proof}[Sketch of the proof of Theorem \ref{Tentconstrank}]
\begin{itemize} 
\item We interpret $\Omega$ as a measure  on the pieces of submanifolds that are $(n-p)$-dimensional and transversal to $\mathcal F$.
\item We construct a partition of $N$ into submanifolds with boundary $Q_1, \dots, Q_J$ and we want to estimate the number $N_k$ of itineraries of length $k$ because we know that $\text{Ent}(f)\geq \lim_{n\to \infty}\frac{1}{k}\log (N_k)$.
\item We fix  $N\subset Q_1$ which is a piece of submanifold that is $(n-p)$-dimensional and transversal to $\mathcal F$, and we show that the measure of the image by $f^k$ of the set of  points of $N$ with a fixed $k$-itinerary is less than a certain constant $C$ (that is independent of $k$).
\end{itemize}
We deduce that $b^k\Omega(N)=\Omega(f^kN)\leq N_kC$, hence 
$$\text{Ent}(f)\geq \lim_{k\to\infty} \frac{1}{k}\log (N_k)\geq \lim_{k\to\infty}(\log b+\frac{\log\Omega(N)-\log C}{k})=\log b.$$

\end{proof}
To conclude this section, let us explain why a Lagrangian submanifold that is invariant under
a conformally Hamiltonian flow is more than Lagrangian: it is exact Lagrangian. We
recall
\begin{definition} Let $(M^{(2d)}, \omega=-d\lambda)$ be a $2d$-dimensional  exact symplectic manifold. Let $L^{(d)}\subset M$ be a $d$-dimensional submanifold of $M$. Then $L$ is {\sl exact Lagrangian} if $\lambda_{|L}$ is exact.
\end{definition}
Note that an exact Lagrangian submanifold is always Lagrangian. \\
In the particular case of the cotangent bundle $T^*N$, the graph of a $1$-form $\eta$ on $N$ is exact Lagrangian if and only if the 1-form $\eta$ is exact, i.e. there exists $u:N\to\mathbb R$ such that $\eta=du$.
\begin{proposition}[Arnaud-Fejoz,  \cite{ArnaudFejoz2024}]\label{PinvLinvEL} Let $(M, \omega=-d\lambda)$ be an exact symplectic manifold and let $X$ be a conformally Hamiltonian vector field. Then every flow invariant closed Lagrangian submanifold is exact Lagrangian.
\end{proposition}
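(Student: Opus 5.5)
Let $L$ be a closed Lagrangian submanifold invariant by the flow $(\varphi_t)$ of $X=X_H^\alpha$, with $i_X\omega=\alpha\lambda+dH$. The plan is to show that $\lambda_{|L}$ is exact by restricting this defining identity to $L$.

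First I would check that $X$ is tangent to $L$ at every point: this follows because $L$ is closed and invariant by the flow. Then, for $x\in L$ and $v\in T_xL$, both $X(x)$ and $v$ lie in $T_xL$. Since $L$ is Lagrangian, we get $\omega(X(x),v)=0$. Hence $(i_X\omega)_{|TL}=0$.

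Restricting the identity $i_X\omega=\alpha\lambda+dH$ to $TL$ then gives
$$0=\alpha\,\lambda_{|L}+d(H_{|L}).$$
Since $\alpha\neq 0$, this yields $\lambda_{|L}=d\bigl(-\tfrac{1}{\alpha}H_{|L}\bigr)$. This is an exact form, so $L$ is exact Lagrangian, with explicit primitive $-\frac{1}{\alpha}H_{|L}$.

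The only point needing care, and the expected obstacle, is the tangency of $X$ to $L$ together with regularity. Invariance means $\varphi_t(L)\subset L$ for all $t$ where the flow is defined; possible incompleteness is harmless locally, since for $x\in L$ the orbit $t\mapsto\varphi_t(x)$ is defined for small $|t|$. Differentiating at $t=0$ a curve contained in the embedded submanifold $L$ gives $X(x)\in T_xL$. The $C^{1,1}$ regularity of $H$ suffices, since only $dH$ restricted to $L$ is used.

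Alternatively, if $\lambda$ is only assumed to satisfy $d\lambda=-\omega$ on $L$, one could also argue through Cartan's formula, $L_X\lambda=\alpha\lambda+dH-\ldots$, but the direct restriction above is simpler and is the route I would follow.
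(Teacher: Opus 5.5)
Your proof is correct, and it takes a genuinely different and more elementary route than the paper.

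The paper argues cohomologically. Lemma \ref{Lhamexact} solves the linear equation $\frac{d}{dt}\eta_t=-\alpha\eta_t+d(\cdots)$ for $\eta_t=\varphi_t^*\lambda-e^{-\alpha t}\lambda$, which shows that $\varphi_t^*\lambda-e^{-\alpha t}\lambda$ is exact. Hence $[\varphi_t^*\lambda_{|L}]=e^{-\alpha t}[\lambda_{|L}]$. Since $\varphi_{t|L}$ is isotopic to the identity of $L$, it acts trivially on $H_1(L)$, so $[\lambda_{|L}]=e^{-\alpha t}[\lambda_{|L}]$, i.e.\ $[\lambda_{|L}]=0$.

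You instead work pointwise. Tangency of $X$ to $L$ together with isotropy kills $(i_X\omega)_{|TL}$, and restricting $i_X\omega=\alpha\lambda+dH$ to $L$ immediately gives the explicit primitive $-\frac{1}{\alpha}H_{|L}$.

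What your route buys:
\begin{itemize}
\item It is purely local: it never uses the global flow on $L$ or its action on homology.
\item The explicit primitive carries information. For a graph $L=\{p=du(q)\}\subset T^*N$ it says exactly that $\alpha u+H(q,du)$ is locally constant, i.e.\ $u$ solves a discounted Hamilton--Jacobi equation.
\end{itemize}

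What the paper's route buys:
\begin{itemize}
\item It gives a statement about \emph{non-invariant} Lagrangians: the Liouville class of $\varphi_t(L)$ is $e^{-\alpha t}$ times that of $L$. This is precisely what is reused in Lemma \ref{LconformalnotconformaHamisotLag}.
\item Its mechanism transposes to conformally symplectic maps with $f^*\lambda-a\lambda$ exact, where there is no vector field to restrict. There one must additionally control the action of $f_{|L}$ on $H^1(L)$.
\end{itemize}

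Finally, drop your closing aside. It is not needed, and its sign is wrong: Cartan's formula gives $L_X\lambda=d(\lambda(X))-\alpha\lambda-dH$.
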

\begin{proof}[Proof of Proposition \ref{PinvLinvEL}]\phantom{fish}
 \begin{lemma}\label{Lhamexact}
Let $X$ such that $L_X\omega=\alpha\lambda+dH$. Then, if $(\varphi_t)$ is the flow of $X$, the 1-form $$\varphi_t^*\lambda -e^{-\alpha t}\lambda=dS_t$$
is exact.
\end{lemma}
\begin{proof}[Proof of Lemma \ref{Lhamexact}] The derivative of $\eta_t=\varphi_t^*\lambda-e^{-\alpha t}\lambda$ is
\[\begin{split}
\varphi_t^*L_X\lambda+\alpha.e^{-\alpha t}\lambda&= \varphi_t^*(di_X\lambda)+\varphi_t^*(-i_X\omega)+\alpha.e^{-\alpha t}\lambda\\
&=\varphi_t^*(di_X\lambda)-\alpha \varphi_t^*(\lambda)-\varphi_t^*(dH)+\alpha.e^{-\alpha t}\lambda
\end{split}\]

Hence we have
$$\begin{cases}
\frac{d\eta_t}{dt}=-\alpha \eta_t+d(\varphi_t^*(i_X\lambda)-H\circ\varphi_t)\\
\eta_0=0
\end{cases}$$
and we deduce the wanted result for 
$$S_t=\int_0^te^{\alpha(s-t)}(\varphi_s^*(i_X(\lambda))-H\circ \varphi_s)ds.$$
\end{proof}
Denoting the De Rham cohomology class by $[\cdot]$ and using the same notation for the homology, we deduce that $[\varphi_t^*\lambda_{|L}]=e^{-\alpha t}[\lambda_{|L}]$. As the action of the flow on $H_1(L)$ is $Id$, we have for every loop $\gamma$ on $L$
$$\langle [\lambda_{|L}], [\gamma]\rangle=\langle [\lambda_{|L}, \varphi_t\circ\gamma\rangle=\langle [\varphi_t^*\lambda_{|L}], [\gamma]\rangle=e^{-\alpha t}\langle [\lambda_{|L}],[\gamma]\rangle.$$
We deduce that $[\lambda_{|L}]=0$.
\end{proof}

\section{Attractors}\label{sec:3}

In this section, we define the concepts of attractor and global attractor. We give examples of conformally symplectic dynamics that have a global attractor, and others that do not. We explain that the global attractor (when it exists) is unique and give some of its properties. We give a sufficient condition for a conformally Hamiltonian dynamics of a cotangent bundle to have a global attractor. We then  introduce two very classical examples~: Ma$\tilde{\text{n}}$\'e example and damped mechanical systems. These two examples   always have a global attractor, which is sometimes a submanifold and sometimes not. We then conclude with an example  due to Arnaud and Fejoz that has a global attractor that is a non-isotropic submanifold.

Let us now introduce the concept of global attractor, as defined in \cite{ArnaudSuZavidovique2015}. There are  similar definitions for diffeomorphisms.
\begin{definition}\label{Dattr+globattr}
 Let $(\varphi_t)$ be a flow defined for $t\geq 0$ on a manifold $N$, and let $K$ be a non-empty compact subset of $N$.
\begin{itemize}
\item $K$ is an {\sl attractor} if there exists an  open subset $U$ of $N$ such that $K\subset U$ and $\forall t>0, \varphi_t(\overline{U})\subset U$ and  $K=\bigcap_{t\geq 0}\varphi_t(U)$.
\item $K$ is the {\sl global attractor} if it is an attractor and if for every neighbourhood $V$ of $K$ and every $x\in N$, there exists $t\in \mathbb R$ such that $\varphi_t(x)\in V$.
\end{itemize}

\end{definition}
\begin{remark}
Observe that an attractor is always flow-invariant. 
\end{remark}

This concept is equivalent to another that may seem stronger, which we explain in the following proposition.
\begin{proposition}\label{Pattractordefstrongereq}
Let $(\varphi_t)$ be a flow defined for $t\geq 0$ on a manifold $N$, and let $K$ be a non-empty compact subset of $N$.
Then $K$ is an attractor if and only if, for every open subset $U$ of $N$ that contains $K$, there exists an open subset $V$ of $U$ that contains $K$ such that   $\forall t>0, \varphi_t(\overline{V})\subset V$ and  $K=\bigcap_{t\geq 0}\varphi_t(V)$.

\end{proposition}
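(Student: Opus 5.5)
The implication ``$\Leftarrow$'' is immediate: apply the hypothesis with $U=N$. All the work is in ``$\Rightarrow$''. Fix an open $W\supset K$ with $\varphi_t(\overline W)\subset W$ for $t>0$ and $K=\bigcap_{t\ge0}\varphi_t(W)$, and fix an arbitrary open $U\supset K$. The plan is to build, inside $U\cap W$, a new trapping neighbourhood of $K$ by saturating a small neighbourhood of $K$ under the flow for a bounded time.

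\textbf{Step 1 (small compact neighbourhoods).} Since $K$ is compact and $N$ is a manifold, choose open sets $V_1, V_0$ with
$$K\subset V_1\subset\overline{V_1}\subset V_0\subset\overline{V_0}\subset U\cap W,$$
where $\overline{V_0}$ is compact. Note the monotonicity $\varphi_s(W)=\varphi_t\bigl(\varphi_{s-t}(W)\bigr)\subset\varphi_t(W)$ for $s\ge t$. Hence the sets $\varphi_t(W)$ decrease to $K$, and $\varphi_t(\overline{V_0})\subset\varphi_t(W)$.

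\textbf{Step 2 (uniform entry time; the main obstacle).} I want $T>0$ such that $\varphi_t(\overline{V_0})\subset V_1$ for all $t\ge T$. I would argue by contradiction. Suppose there are $x_n\in\overline{V_0}$ and $t_n\to\infty$ with $\varphi_{t_n}(x_n)\notin V_1$. The delicate point is to keep $\varphi_{t_n}(x_n)$ in a compact set. I would first try to use continuity of the flow on the compact set $[0,1]\times\overline{V_0}$, together with the decreasing family $\varphi_t(\overline W)$, to extract a limit point $y\notin V_1$. This point should lie in $\varphi_t(\overline W)\subset W$ for every $t$, hence in $\bigcap_t\varphi_t(W)=K\subset V_1$, a contradiction. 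If the orbits can escape to infinity, this is exactly where the argument breaks. In that case I would restrict the trapping region: replace $W$ by $W'=\bigcup_{t\ge0}\varphi_t(V_0)$, which is open and forward invariant and, by Step 1, contained in $W$. I would then redo the compactness argument inside $\overline{W'}$, trying to show $\overline{W'}$ is compact using the first-entry behaviour near $K$. I expect this uniformity to be the only genuinely nontrivial point of the proof.

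\textbf{Step 3 (construction of $V$).} With $T$ from Step 2, set
$$V=\bigcup_{0\le t\le T}\varphi_t(V_0).$$
This set is open, since each $\varphi_t$ is open onto its image, being a diffeomorphism onto an open set as a restriction of a flow. It contains $K$, because $K=\varphi_t(K)$ by invariance. It lies in $U$, because $\varphi_t(\overline{V_0})\subset V_1\subset U$ for $t\ge T$ and, after shrinking $V_0$ if necessary using continuity of the flow on $[0,T]\times K$, also for $t\le T$. Forward invariance $\varphi_s(\overline V)\subset V$ then follows from two facts: $\overline V\subset\bigcup_{0\le t\le T}\varphi_t(\overline{V_0})$ by compactness of $[0,T]\times\overline{V_0}$, and any time exceeding $T$ lands in $V_1\subset V_0\subset V$. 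Finally, $K\subset\bigcap_{t\ge0}\varphi_t(V)\subset\bigcap_{t\ge0}\varphi_t(W)=K$, which is the required equality.
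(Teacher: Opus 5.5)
Your proof of ``$\Leftarrow$'' is fine. The ``$\Rightarrow$'' direction has a genuine gap exactly where you locate it: Step 2 is never proved, and for $V_0$ constrained only as in Step 1 it is false. On $N=\mathbb R$, take the complete flow of $\dot x=x(x-3/2)/(1+x^2)$ and $W=(-1,1)\cup(2,+\infty)$. Then $\varphi_t(\overline W)\subset W$ for $t>0$ and $\bigcap_{t\ge0}\varphi_t(W)=\{0\}=K$. With $U=N$, $V_1=(-1/2,1/2)$ and $V_0=(-3/4,3/4)\cup(3,4)$, every requirement of Step 1 holds. Yet $\varphi_t([3,4])\to+\infty$, so no $T$ exists and $W'=\bigcup_{t\ge0}\varphi_t(V_0)$ is not relatively compact. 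Your sequential-compactness argument therefore cannot work, since orbits starting in a compact subset of $W$ may leave every compact set. The fallback ``show $\overline{W'}$ is compact'' is only a restatement of the missing claim.

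The idea you need is the paper's, and it avoids compactness of orbits altogether. Shrink $U$ so that $\overline U\subset U_1$ with $U_1$ relatively compact. The sets $\varphi_t(\overline W)\cap(\overline{U_1}\setminus U)$ form a decreasing family of compact sets whose intersection is $K\cap(\overline{U_1}\setminus U)=\emptyset$. Hence some $\varphi_T(\overline W)$ misses this compact ``shell'' and lies in the union of the disjoint open sets $U$ and $N\setminus\overline{U_1}$. Since $\varphi_s(\varphi_T(\overline W))\subset\varphi_T(\overline W)$ and $s\mapsto\varphi_s(x)$ is continuous, the forward orbit of a point of $\varphi_T(\overline W)\cap U$ can never leave $U$. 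Points of $\varphi_T(\overline W)$ outside $\overline{U_1}$ simply stay outside.

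Step 3 has a second, independent gap. Your proof of $\varphi_s(\overline V)\subset V$ only handles the case $t+s\ge T$. For $y\in\partial V_0$ and small $s>0$, the point $\varphi_s(y)$ lies in $\varphi_s(\overline{V_0})$ but need not lie in $V=\bigcup_{0\le t\le T}\varphi_t(V_0)$. For example, take $\varphi_t(p)=e^{-t}p$ on $\mathbb R^2$, with $W=U$ the open disc of radius $3$, $V_1$ the open disc of radius $1/2$, and $V_0$ the open disc of radius $2$ minus the segment $[1,2]\times\{0\}$. Step 2 holds for any $T>\ln 4$. Yet $y=(3/2,0)\in\overline V$ and $\varphi_s(y)=(\frac32 e^{-s},0)\notin V$ for $0<s<\ln(3/2)$. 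Separately, shrinking $V_0$ after $T$ has been fixed can destroy $V_1\subset V_0$, which your forward-invariance argument uses.

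The paper avoids both problems by taking $V=\varphi_T(W)\cap U$. Then $\overline V\subset\varphi_T(\overline W)\cap U$, because $\overline U$ does not meet $N\setminus\overline{U_1}$. For $s>0$, the trapping property of $W$ itself gives $\varphi_s(\overline V)\subset\varphi_{T+s}(\overline W)\subset\varphi_T(W)$. The connectedness argument above gives $\varphi_s(\overline V)\subset U$. The equality $K=\bigcap_{t\ge0}\varphi_t(V)$ then follows as in your last line.
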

\begin{proof}[Proof of Proposition \ref{Pattractordefstrongereq}] Let us assume that $K$ is an attractor. Let $W$ be an    open subset  of $N$  such that $K\subset W$ and $\forall t>0, \varphi_t(\overline{W})\subset W$ and  $K=\bigcap_{t\geq 0}\varphi_t(W)$. \\
Let $U$ be an open subset that contains $K$. Even if it means reducing $U$, we can assume that that there exists a relatively compact open subset $U_1$ of $N$ such that $\overline{U}\subset U_1$.
Then the family $(\varphi_t(\overline{W})\cap (\overline{U_1}\backslash U))_{t\geq 0}$ is a decreasing family of compact subsets whose intersection is empty. This implies that there exists $T\geq 0$ such that $\forall t\geq T, \varphi_t(\overline{W})\cap (\overline{U_1}\backslash U)=\emptyset$ hence such that $\varphi_T(\overline{W})\subset U\cup(N\backslash \overline{U_1})$. As $U$ and $N\backslash \overline{U_1}$ are disjoint open subsets, every positive orbit that is contained in $U\cup(N\backslash \overline{U_1})$ is contained either in $U$ or in $ N\backslash \overline{U_1}$. We  then choose $V=\varphi_T(W)\cap U$.
\end{proof}

Albert Fathi explained to us that the definition of an attractor can be weakened as follows.  We would also like to thank Baptiste Serraille for asking the question.  A related reference is \cite{Fathi2022}. 

\begin{proposition}\label{Pattractordeflightereq}
Let $(\varphi_t)$ be a flow defined for $t\geq 0$ on a manifold $N$, and let $K$ be a non-empty  compact subset of $N$.
Then $K$ is an attractor if and only if there exists an open subset $U$ of $N$  such that  $K=\bigcap_{t\geq 0}\varphi_t(U)$.
\end{proposition}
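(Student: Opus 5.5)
The plan is to build, starting from the open set $U$ with $K=\bigcap_{t\ge 0}\varphi_t(U)$, a smaller open neighbourhood $W$ of $K$ with $\varphi_t(\overline W)\subset W$ for $t>0$, imitating the compactness argument in the proof of Proposition \ref{Pattractordefstrongereq}. The direction "attractor $\Rightarrow$ the condition" is trivial, since an attractor comes with such a $U$. For the converse, first note that $K\subset U$ (take $t=0$). Also, $x\in\varphi_t(U)$ means exactly that the backward orbit of $x$ is defined up to time $-t$ and $\varphi_{-t}(x)\in U$. So $K$ is the set of points whose whole backward orbit exists and stays in $U$.

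\textbf{Step 1 (choice of an isolating neighbourhood).} Choose a relatively compact open set $U_1$ with $K\subset U_1\subset\overline{U_1}\subset U$. Every point of the compact set $\partial U_1$ lies outside $K$. Using the continuity of the flow, I will show that there is $T>0$ such that $\varphi_T(U)\cap\partial U_1=\emptyset$. The argument is the same Dini-type argument as before: the family $\varphi_t(\overline{U_1})\cap\partial U_1$, restricted to backward orbits staying in $\overline{U_1}$, gives compact sets which are decreasing in $t$ and whose intersection is contained in $K\cap\partial U_1=\emptyset$. This step needs care, since the sets $\varphi_t(U)$ are not a priori nested. The fix I intend is to replace $\varphi_t(U)$ by $\bigcap_{0\le s\le t}\varphi_s(U)$, which is nested and has the same intersection $K$.

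\textbf{Step 2 (separation trick).} Set $V_T=\bigcap_{0\le s\le T}\varphi_s(U)$ and $W=V_T\cap U_1$. Since $V_T$ misses $\partial U_1$, the open set $V_T$ is the disjoint union of the two open sets $V_T\cap U_1$ and $V_T\setminus\overline{U_1}$. I then check the following forward-invariance property. If $x\in W$ and $t>0$, the backward orbit of $\varphi_t(x)$ over time $T$ is made of points of $\varphi_{[0,t]}(x)$ and of the backward orbit of $x$. I must show that these stay in $U$, so that $\varphi_t(x)\in V_T$. By the connectedness of the orbit segment $\varphi_{[0,t]}(x)$ and the separation above, this segment cannot cross $\partial U_1$, so it stays in $U_1\subset U$. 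Together with the backward orbit of $x$ lying in $U$, this gives $\varphi_t(x)\in V_T\cap U_1=W$.

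\textbf{Step 3 (closures and the intersection).} Replacing $W$ by $\varphi_\tau(W)$ for a small $\tau>0$, or enlarging $T$, should upgrade $\varphi_t(W)\subset W$ to $\varphi_t(\overline W)\subset W$. The reason is that $\overline W\subset\overline{U_1}\subset U$, so backward orbits from points of $\overline W$ gain a margin inside the open set $U$. Finally, $\bigcap_{t\ge0}\varphi_t(W)\subset\bigcap_t\varphi_t(U)=K$. Conversely, $K$ is backward invariant and lies in $W$, so $K\subset\varphi_t(K)\subset\varphi_t(W)$ for all $t\ge 0$, which gives equality.

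\textbf{Main obstacle.} I expect the hardest points to be the uniformity in Step 1 and the closure statement in Step 3. Both require quantifying "leaves $U$ in bounded backward time" uniformly on compact sets, despite the non-monotonicity of $t\mapsto\varphi_t(U)$ and the fact that the flow is only a forward semiflow. The first thing I would try is the nested replacement described in Step 1, combined with the boundary-compactness argument from Proposition \ref{Pattractordefstrongereq}.
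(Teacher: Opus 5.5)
There is a genuine gap, in two places. The second is exactly the point where the paper brings in an outside tool.

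\textbf{Step 1 is false as formulated.} Your compactness (Dini-type) argument only controls backward orbits confined to the compact set $\overline{U_1}$. It says nothing about orbits that leave $\overline{U_1}$ but keep wandering inside $U$. Moreover, the sets $V_T\cap\partial U_1$ are not compact.

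Here is a counterexample. Take $\varphi_t(z)=e^{-t}z$ on $\mathbb R^2$ and $U=D\cup\{(x,y): x>0,\ 0<y<1\}$, where $D$ is the open unit disc. Then $\bigcap_{t\ge0}\varphi_t(U)=\{0\}$. Now let $U_1\ni 0$ be any bounded open set with $\overline{U_1}\subset U$. Let $p_\theta$ be the first point where the ray of angle $\theta\in(0,\pi/2)$ meets $\partial U_1$. Then $e^{s}p_\theta\in U$ for all $s<\log\bigl(1/(|p_\theta|\sin\theta)\bigr)$, and this bound tends to $+\infty$ as $\theta\to0^+$. Hence $V_T\cap\partial U_1\neq\emptyset$ (and $\varphi_T(U)\cap\partial U_1\neq\emptyset$) for every $T$.

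The repair is to work with $C_T=\{x:\varphi_{-s}(x)\in\overline{U_1}\ \forall s\in[0,T]\}$. These sets are compact, decreasing, and satisfy $\bigcap_T C_T\subset K$, so $C_T\cap\partial U_1=\emptyset$ for some $T$. Then set $W=\{x:\varphi_{-s}(x)\in U_1\ \forall s\in[0,T]\}$. A first-exit-time argument gives $\varphi_t(W)\subset W$ for all $t\ge0$ and $K=\bigcap_t\varphi_t(W)$. Your separation argument, as written, assumes $\varphi_{[0,t]}(x)\subset V_T$, which is what has to be shown.

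\textbf{Step 3 is the real difficulty, and your proposed fixes do not work.} Points of $\overline W$ only have backward orbits in $\overline{U_1}$ (in your version, in $\overline U$), so there is no margin. If the backward orbit of $x\in\overline W$ touches the boundary tangentially from inside at a time $u<T$, then $\varphi_t(x)\notin W$ for all $t\in(0,T-u]$.

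Concretely, use the same radial contraction with $U_1=D_3\setminus\overline B$, where $D_3$ is the open disc of radius $3$ centred at $0$ and $\overline B$ is the closed unit disc centred at $(2,1)$. The point $x=(3e^{-T},0)$ lies in $\overline W$ (points just below the axis are in $W$). Its backward ray touches $(2,0)\in\partial U_1$ at time $T-\log(3/2)$. So $\varphi_t(x)\notin W$ for $0<t\le\log(3/2)$, whatever admissible $T$ you choose.

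Enlarging $T$ therefore does not help. Replacing $W$ by $\varphi_\tau(W)$ does not help either, since $\varphi_t(\overline{\varphi_\tau(W)})\subset\varphi_\tau(W)$ is equivalent to $\varphi_t(\overline W)\subset W$. What your construction actually yields is $\varphi_t(\overline W)\subset W$ only for $t\ge T$.

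To get the strict inclusion for every $t>0$, you need a continuous function that decreases strictly along orbits near $K$, off $K$. Its sublevel sets are then automatically strictly trapping. This is exactly the ingredient the paper uses. It chooses $V$ with $K\subset V\subset\overline V\subset U$ and invokes Fathi's theorem \cite{Fathi2022}, which gives a smooth $f\ge0$ with $f^{-1}(0)=K$ and $Df\cdot X<0$ on $\overline V\setminus K$. It then takes $U_\varepsilon=f^{-1}([0,\varepsilon))$. You could instead build such a function by hand from your forward-invariant set, for instance by Conley-type averaging $\int_0^\infty e^{-s}g(\varphi_s(x))\,ds$. But some such device is missing from your proposal.
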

\begin{proof}[Proof of Proposition \ref{Pattractordeflightereq}]
Assume    there exists   an open neighborhood $U$ of $K$ such that  $K=\bigcap\limits_{t\geq0}\varphi_t(U)$. Let $V\subset N$ be an open set such that $K\subset V \subset \overline V \subset U$. Then $K=\bigcap\limits_{t>0}\varphi_t(\overline V)$.  It is proven in \cite[Theorem 1.4]{Fathi2022} that there exists a smooth Lyapounov  function $f : N \to [0,+\infty)$ such that $f^{-1}(\{0\}) = K$ and $Df(x)\cdot X(x) <0$ for all $x \in \overline V\setminus K$ where $X$ denotes the vector field generating the flow. Then setting $U_\varepsilon = f^{-1}([0,\varepsilon))$ for $\varepsilon>0$ sufficiently small   gives an open set that contains $K$ and such that  $\varphi_t(\overline{U_\varepsilon})\subset U_\varepsilon$ and $K=\bigcap\limits_{t>0}\varphi_t(U_\varepsilon)$. 
\end{proof}

\begin{example} Let's return to the examples in Section \ref{sec:1}.
\begin{itemize}
\item For the Liouville flow on $T^*N$, the zero-section is the global attractor.
\item For the diffeomorphism $f: \mathbb R^2\to \mathbb R^2$ defined by $f(x, y)=(x+1, ay)$, there is no (compact) global attractor.
\item For Example \ref{E10} where we consider the conformally Hamiltonian flow of     $H:\mathbb T\times \mathbb R\to \mathbb R$  defined by $H(\theta, r)=r\sin (2\pi \theta)$ with $\alpha\in (0, 2\pi)$, there is no (compact) global attractor.
\end{itemize}

\end{example}

Can a periodic orbit be attractive, repulsive? \\
It is known for a  non fixed $T$-periodic point a symplectic flow $(\varphi_t)$ that the eigenvalues of $D\varphi_T(x)$ come by pairs, see section 2.2 of \cite{McDuffSalamon2017}. More precisely, $1$ is a eigenvalues with even non-zero multiplicity, and if $\lambda$ is an eigenvalue, then $\lambda$, $\lambda^{-1}$, $\overline{\lambda}$ and $\overline{\lambda}^{-1}$ are eigenvalues with the same mulitiplicity. If $-1$ is an eigenvalue, its multiplicity is even. \\
Hence in the conformal case,  if $\lambda$ is an eigenvalue, then $\overline{\lambda}$, $e^{-\alpha T}\lambda$ and $e^{-\alpha T}\overline{\lambda}$ are eigenvalues with the same multiplicity. Thus, when $\alpha>0$, it may happen that there is an attracting periodic orbit, but there is no repulsive periodic orbit.

Let us now state a maximality property of the global attractor.

\begin{proposition}\label{Pglogattrmax}
When there is a global attractor, all the backward invariant compact subsets are contained in the global attractor.
\end{proposition}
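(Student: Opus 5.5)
The plan is to compare a backward invariant compact set $C$ with the sets $\varphi_t(U)$ that cut out the global attractor $K$. Here \emph{backward invariant} is read as $C\subset\varphi_t(C)$ for every $t\geq 0$: every point of $C$ has its whole backward orbit in $C$. Let $U$ be an open set as in Definition \ref{Dattr+globattr}, so that $K\subset U$, $\varphi_t(\overline U)\subset U$ for $t>0$, and $K=\bigcap_{t\geq 0}\varphi_t(U)$. It suffices to show that $C\subset \varphi_t(U)$ for every $t\geq 0$.

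\textbf{Step 1: the family $(\varphi_t(U))_{t\geq0}$ is decreasing.} For $s\geq t$ we have $\varphi_s(U)=\varphi_t(\varphi_{s-t}(U))\subset\varphi_t(U)$, because $\varphi_{s-t}(U)\subset U$.

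\textbf{Step 2: some time $s\geq 0$ with $\varphi_s(C)\subset U$.} This is the key step, and it uses the global attractor property together with compactness. Take $x\in C$. Applying the definition of global attractor with the neighbourhood $V=U$ of $K$ gives a time $t_x$ with $\varphi_{t_x}(x)\in U$.
\begin{itemize}
\item The definition allows $t_x\in\mathbb R$. If $t_x<0$, then $x=\varphi_{|t_x|}(\varphi_{t_x}(x))\in\varphi_{|t_x|}(U)\subset U$, so we may replace $t_x$ by $0$. Hence we may take $t_x\geq 0$.
\item By continuity of $\varphi_{t_x}$ there is an open neighbourhood $W_x$ of $x$ with $\varphi_{t_x}(W_x)\subset U$.
\item Forward invariance of $U$ then gives $\varphi_t(W_x)\subset U$ for all $t\geq t_x$.
\end{itemize}
Cover $C$ by finitely many sets $W_{x_1},\dots,W_{x_k}$ and put $s=\max_i t_{x_i}$. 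Then $\varphi_s(C)\subset U$.

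\textbf{Step 3: conclusion.} For every $t\geq 0$, backward invariance applied with time $t+s$ gives
$$C\subset\varphi_{t+s}(C)=\varphi_t(\varphi_s(C))\subset\varphi_t(U).$$
Taking the intersection over $t\geq 0$ yields $C\subset\bigcap_{t\geq0}\varphi_t(U)=K$.

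I expect the only delicate point to be Step 2. One has to turn the pointwise statement "every orbit eventually enters $U$" into a uniform entrance time on $C$. This works precisely because $U$ is forward invariant, so that once an orbit has entered $U$ it stays there, and because $C$ is compact. The possibly negative time allowed in the definition of global attractor is harmless, as shown in the first item of Step 2. The same argument applies verbatim to diffeomorphisms, with integer times in place of real ones.
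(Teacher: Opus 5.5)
Your proof is correct and is essentially the paper's argument: compactness and forward invariance of a trapping neighbourhood $U$ give a uniform time $s$ with $\varphi_s(C)\subset U$, and backward invariance $C\subset\varphi_{s}(C)$ then transfers this to $C$ itself. The only difference is that the paper argues by contradiction, using a trapping neighbourhood chosen to miss a point of $C$ (which tacitly relies on Proposition \ref{Pattractordefstrongereq}), whereas you conclude directly from $C\subset\varphi_t(U)$ for all $t\geq 0$ and $K=\bigcap_{t\geq 0}\varphi_t(U)$, which avoids that proposition and spells out the compactness step that the paper only sketches.
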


\begin{corollary}\label{Cunstableglobattract}
Assume that $x$ is a periodic hyperbolic periodic point of the flow $(\varphi_t)$ and assume that there is a global attractor. Then the unstable submanifold of $x$ is contained in the global attractor. 
\end{corollary}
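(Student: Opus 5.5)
We need to prove Corollary \ref{Cunstableglobattract}: if $x$ is a hyperbolic periodic point and a global attractor $K$ exists, then the unstable manifold $W^u(x)$ is contained in $K$. The plan is to deduce this from Proposition \ref{Pglogattrmax} by exhausting $W^u(x)$ with backward invariant compact sets.

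Let $T$ be the period of $x$ and let $\mathcal O=\{\varphi_t(x); t\in[0,T]\}$ be its orbit. The unstable manifold is
$$W^u(x)=\{ y;\ d(\varphi_{-t}(y),\varphi_{-t}(x))\to 0 \text{ as } t\to+\infty\}.$$
For a flow it is natural to take the unstable set of the orbit, namely the points whose $\alpha$-limit set is $\mathcal O$ in the asymptotic-phase sense; any of the usual definitions works.

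Fix $y\in W^u(x)$. The key step is to show that the set
$$C_y=\overline{\{\varphi_{-t}(y); t\ge 0\}}=\{\varphi_{-t}(y); t\geq 0\}\cup \mathcal O$$
is compact and backward invariant, i.e. $\varphi_{-s}(C_y)\subset C_y$ for $s\ge0$.

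Compactness comes from the convergence $d(\varphi_{-t}(y),\varphi_{-t}(x))\to0$. Given a sequence $\varphi_{-t_n}(y)$, either the $t_n$ are bounded, and a subsequence converges to a point of the compact arc $\{\varphi_{-t}(y); t\in[0,t_0]\}$, or $t_n\to\infty$. In the second case the points stay within vanishing distance of the compact orbit $\mathcal O$, so a subsequence converges to a point of $\mathcal O$. Hence the closure is the displayed union and is compact. Backward invariance is immediate for the negative half-orbit. For $\mathcal O$ it holds because $\mathcal O$ is a full periodic orbit, and by continuity of $\varphi_{-s}$ the closure of a backward invariant set is backward invariant.

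Applying Proposition \ref{Pglogattrmax}, we get $C_y\subset K$, and in particular $y\in K$. Since $y\in W^u(x)$ was arbitrary, $W^u(x)\subset K$.

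The main obstacle is only a matter of definitions. For a flow defined merely for $t\ge0$, as in Definition \ref{Dattr+globattr}, the backward orbit of $y$ must make sense. This is fine here because points of $W^u(x)$ have a backward orbit by the very definition of the unstable manifold. We should also check that ``backward invariant'' in Proposition \ref{Pglogattrmax} means $\varphi_{-s}(C)\subset C$ whenever it is defined, or equivalently $C\subset\varphi_s(C)$, which $C_y$ satisfies. Hyperbolicity itself is used only to guarantee that $W^u(x)$ is a genuine immersed manifold; the inclusion argument uses nothing but backward asymptotics.
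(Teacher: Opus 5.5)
Your argument is correct and is exactly the intended derivation from Proposition~\ref{Pglogattrmax}. For $y\in W^u(x)$, the closure $\{\varphi_{-t}(y);t\geq 0\}\cup\mathcal O$ of the negative half-orbit is a compact backward invariant set, so it lies in the global attractor. Your remark that hyperbolicity plays no role in the inclusion matches the paper, which notes right after the corollary that the statement holds for the unstable subset of a non-hyperbolic periodic orbit.
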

When $x$ is not hyperbolic, the statement is true if we replace the unstable submanifold by the   unstable subset, that is, the set of points whose negative orbit tends  toward the orbit of $x$ in negative times.

\begin{corollary} Let $(\varphi_t)$ be a   flow of $M$. Then there is at most one global attractor. 

\end{corollary}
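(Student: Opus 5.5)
Suppose $K_1$ and $K_2$ are both global attractors of $(\varphi_t)$. I will show $K_1\subset K_2$; exchanging the roles then gives $K_2\subset K_1$, hence $K_1=K_2$. The whole argument rests on Proposition \ref{Pglogattrmax}, applied with $K_2$ as the global attractor and $K_1$ as the compact set to be absorbed.

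First, $K_1$ is compact by definition of an attractor. Second, $K_1$ is backward invariant; this is the only step needing an argument, and it is the main point. Let $U$ be an open set with $K_1=\bigcap_{t\geq 0}\varphi_t(U)$ and $\varphi_t(\overline U)\subset U$ for $t>0$. Fix $s\ge 0$; I will show $\varphi_s(K_1)=K_1$, which gives invariance in both time directions.

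For the inclusion $\varphi_s(K_1)\subset K_1$, note that the family $\varphi_t(U)$ is decreasing in $t$, since $\varphi_{t+r}(U)=\varphi_t(\varphi_r(U))\subset\varphi_t(U)$. Hence
$$\varphi_s(K_1)\subset\bigcap_{t\ge0}\varphi_{s+t}(U)=\bigcap_{t\ge s}\varphi_t(U)=K_1 .$$

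For the reverse inclusion, take $y\in K_1$. For each $n\geq s$ write $y=\varphi_n(u_n)$ with $u_n\in U$, and set $x_n=\varphi_{n-s}(u_n)$. Then $x_n\in\varphi_{n-s}(U)$ and $\varphi_s(x_n)=y$. If the flow is only a forward semiflow, I argue by compactness: the points $x_n$ lie in $\varphi_{n-s}(\overline U)$, and these sets shrink to $K_1$, so the $x_n$ accumulate on $K_1$. To justify the accumulation, use Proposition \ref{Pattractordefstrongereq} to choose $U$ relatively compact, so that the $x_n$ remain in a compact set. A limit point $x$ then lies in $\bigcap_{t}\varphi_t(\overline U)=K_1$, and continuity gives $\varphi_s(x)=y$. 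If the flow is invertible, this step is immediate: $x_n=\varphi_{-s}(y)$ for every $n$. Either way, $K_1\subset\varphi_s(K_1)$. This establishes the invariance of an attractor, which the paper's Remark also records.

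Having shown that $K_1$ is a backward invariant compact set, Proposition \ref{Pglogattrmax} applied to the global attractor $K_2$ gives $K_1\subset K_2$. By symmetry $K_2\subset K_1$, so $K_1=K_2$, and the global attractor is unique.
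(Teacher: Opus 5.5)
Your proof is correct and follows the paper's route: the corollary is meant as a direct consequence of Proposition \ref{Pglogattrmax}, since each global attractor is a compact, flow-invariant (hence backward invariant) set and is therefore contained in the other. The only addition in your write-up is a proof that an attractor is invariant, which the paper simply states in its Remark. Your argument for this is sound, including the semiflow case, where the identity $\bigcap_{t\ge0}\varphi_t(\overline U)=K_1$ follows from $\varphi_{2t}(\overline U)\subset\varphi_t(U)$ for $t>0$.
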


\begin{corollary}
Let $(\varphi_t)$ be a   flow on $M$  that is assumed to be compact. Then the global attractor is $M$.
\end{corollary}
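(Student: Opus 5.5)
The statement to prove is the last corollary: if $M$ is compact, then the global attractor of $(\varphi_t)$ is $M$. The plan is to show two things: that $M$ itself is a global attractor, and that no other global attractor can exist.

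First, $M$ is an attractor. Take $U=M$, which is open, contains $M$, and is compact and non-empty. Since the flow is defined for $t\geq 0$ and $M$ is compact, each $\varphi_t$ is a diffeomorphism of $M$ onto itself. Hence $\varphi_t(\overline{U})=M\subset U$ for all $t>0$, and
$$\bigcap_{t\geq 0}\varphi_t(U)=M.$$
So $M$ satisfies Definition~\ref{Dattr+globattr}. It is moreover the global attractor: every neighbourhood $V$ of $M$ equals $M$, so every point $x$ lies in $V$ already at $t=0$.

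Second, uniqueness. By the preceding corollary there is at most one global attractor, so $M$ is the global attractor.

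As a consistency check, or as an alternative route, apply Proposition~\ref{Pglogattrmax} directly. Let $K$ be any global attractor. The set $M$ is compact and backward invariant, because $\varphi_{-t}(M)=M$. The proposition then gives $M\subset K$, hence $K=M$.

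The only point needing care is that $\varphi_t$ is surjective onto $M$, so that $\varphi_t(M)=M$ rather than a proper subset. This holds because a flow on a compact manifold is complete and each time-$t$ map is invertible, with inverse $\varphi_{-t}$. I do not expect any real obstacle.
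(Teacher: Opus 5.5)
Your proof is correct and is essentially the paper's argument: the corollary is meant to follow from Proposition~\ref{Pglogattrmax} applied to the compact, backward-invariant set $M$. That proposition is also the source of the uniqueness corollary you invoke, and the rest is the trivial observation that $M$ is a global attractor with $U=M$. The one point needing care is $\varphi_t(M)=M$, which holds because $(\varphi_t)$ is a genuine flow with inverse $\varphi_{-t}$, as you note at the end; being defined for $t\geq 0$ on a compact set would not be enough.
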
 
\begin{proof}[Proof of Proposition \ref{Pglogattrmax}]
We denote the global attractor by $A$ and assume that $K$ is a backward invariant compact subset that is not contained in $A$. We then choose an open subset $U$ such that $A\subset U$, $K\not\subset U$ and $\forall t>0, \varphi_t(\overline U)\subset U$. Thanks to the compactness of $K$  and the forward invariance of $U$, there exists $T>0$ such that $\forall t\geq T, \forall x\in K, \varphi_t(x)\in U$. But since $K$ is backward invariant, we know that $K\subset \varphi_T(K)$, and we obtain that $K\subset \varphi_T(K)\subset U$, which contradicts $K\not\subset U$.

\end{proof}
For the following proposition, we need an hypothesis for the exact symplectic manifold to ensure that   the intersection of a closed exact Lagrangian submanifold and its image by a (symplectic) Hamiltonian diffeomorphism  intersect. The classical hypothesis is to assume that the manifold is convex at infinity, \cite{Floer1988, McDuff1991, Viterbo1999}.

\begin{proposition}\label{PglobalattmeetexLag}
Let $(M, -d\lambda)$ be an exact symplectic manifold that is convex at infinity and let $H:M\to \mathbb R$ be $C^{1, 1}$ and let $\alpha>0$. Assume that $X_H^\alpha$ has a global attractor $A$. Then $A$ meets every closed exact Lagrangian submanifold of $M$.
\end{proposition}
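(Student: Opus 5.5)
Let $L$ be a closed exact Lagrangian submanifold of $M$ and let $(\varphi_t)$ be the flow of $X=X_H^\alpha$. I argue by contradiction: assume $A\cap L=\emptyset$. Then there is an open neighbourhood $U$ of $A$ with $\overline U\cap L=\emptyset$. By Proposition \ref{Pattractordefstrongereq} we may take $U$ forward invariant, i.e. $\varphi_t(\overline U)\subset U$ for $t>0$. Since $L$ is compact and every point of $L$ enters $U$, a compactness argument (each point enters the open forward-invariant set $U$ at some time and stays there, and by continuity this time is locally uniform) gives $T>0$ with $\varphi_t(L)\subset U$ for all $t\geq T$. In particular $\varphi_T(L)\cap L=\emptyset$.

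The contradiction will come from showing that $\varphi_T(L)$ is the image of $L$ by a Hamiltonian diffeomorphism, up to a Liouville rescaling, so that the intersection property in manifolds convex at infinity applies. By Lemma \ref{Lhamexact}, $\varphi_T^*\lambda-e^{-\alpha T}\lambda=dS_T$. Hence $\varphi_T(L)$ is a closed Lagrangian submanifold, and $\lambda_{|\varphi_T(L)}$ pulls back to $e^{-\alpha T}\lambda_{|L}+dS_T{}_{|L}$, which is exact because $L$ is exact. So $\varphi_T(L)$ is again a closed exact Lagrangian submanifold. Moreover, $L_t=\varphi_t(L)$ for $t\in[0,T]$ is an isotopy through exact Lagrangian submanifolds.

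The standard fact I need is that an exact Lagrangian isotopy is induced by a Hamiltonian isotopy. The normal velocity of $L_t$ is the 1-form $i_X\omega_{|L_t}=(\alpha\lambda+dH)_{|L_t}$, and this form is exact on $L_t$. Choosing a primitive $h_t$ on $L_t$, extending it to a compactly supported function on $M$ near $L_t$, and correcting tangentially produces a Hamiltonian diffeomorphism $\psi$ of $M$ with $\psi(L)=\varphi_T(L)$. By the intersection result for manifolds convex at infinity \cite{Floer1988, McDuff1991, Viterbo1999}, which is exactly the hypothesis invoked before the statement, $\psi(L)\cap L\neq\emptyset$. This contradicts $\varphi_T(L)\cap L=\emptyset$. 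Therefore $A\cap L\neq\emptyset$.

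I expect the main obstacle to be this middle step: carefully turning the exact Lagrangian isotopy $\varphi_t(L)$ into an ambient compactly supported Hamiltonian isotopy. The point to verify is that exactness of the flux form $(\alpha\lambda+dH)_{|L_t}$ holds at each time, which follows from the cohomology computation in Lemma \ref{Lhamexact}. A more minor point is the uniform entrance time $T$, which needs forward invariance of $U$ and compactness of $L$.
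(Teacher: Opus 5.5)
Your proof is correct and follows the paper's argument. You argue by contradiction using a forward-invariant neighbourhood $U$ of $A$ disjoint from $L$, take a uniform entrance time $T$ from compactness of $L$, and then invoke non-displaceability of $L$ under a compactly supported Hamiltonian isotopy carrying $L$ to $\varphi_T(L)$. The only difference is how you build that isotopy: you get it in one stroke from exactness of the flux form $(\alpha\lambda+dH)_{|\varphi_t(L)}$, whereas the paper works locally in time with Weinstein neighbourhoods and concludes by transitivity; your version is, if anything, more direct.
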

\begin{proof}[Proof of Proposition \ref{PglobalattmeetexLag}] Let $L$ be a closed exact Lagrangian submanifold of $M$ and let $(\varphi_t)$ be the flow of $X_H^\alpha$. 
\begin{lemma}\label{LconformalnotconformaHamisotLag}
Let $L$ be an exact closed Lagrangian submanifold and let $(\varphi_t)$ be a conformally Hamiltonian flow. Let $T\in\mathbb R$ be any real number. Then there exists a (symplectic) Hamiltonian isotopy $(\psi_t)$ such that $  \psi_T(L)=\varphi_T(L)$.
\end{lemma} 
\begin{proof}[Proof of Lemma \ref{LconformalnotconformaHamisotLag}] The proof is very similar to that of  corollary 3 p 172 of  \cite{ArnaudFejoz2024}. We give the main steps of the proof.
\begin{itemize}
\item By examining the proof of Lemma \ref{Lhamexact} in Section \ref{sec:2}, we see that the De Rham cohomology classes of $\lambda_{|L}$ and $\lambda{|\varphi_t(L)}$ satisfy the equality $[\lambda{|\varphi_t(L)}]=e^{-\alpha t}[\lambda_{|L}]$. This implies that   $(\varphi_t(L))$ is an isotopy of exact Lagrangian submanifolds.
\item Let   $t$ be fixed. Let  $j:U\subset T^*L \hookrightarrow M$ be  a Weinstein neighbourhood  of  $\varphi_t(L)$, see \cite{Weinstein1979 }. Note that $j$ is exact symplectic because the zero-section of $T^*L$ and $L\subset M$ are exact Lagrangian submanifolds. Then there exists $\varepsilon>0$ such that for all $s\in [t-\varepsilon, t+\varepsilon]$, $\varphi_s(L)\subset j(U)$ and $j^{-1}(\varphi_s(L))$ is the graph of $du_s$ for some $u_s:L\to\mathbb R$.  The function $u_s$ can be chosen to be $C^1$ with respect to  $s$. 
\item We   then construct a family $(h_s)$ of functions with compact support in  $U$ such that $h_t(q,p)=-\frac{\partial u_t}{\partial t} (q)$ in the neighbourhhod of $L$.  It defines a (symplectic) Hamiltonian isotopy $(f_t)$ with compact support in $U$ such that $f_s(j^{-1}(L))=j^{-1}(\varphi_s(L))$.
\item We now define $H_s:M\to \mathbb R$ by $H_s=h_s\circ j^{-1}$, which defines a (symplectic) Hamiltonian isotopy $(g_s)$ such that $g_s(\varphi_t(L))=\varphi_s(L)$.
\item By transitivity of the relation ``to be  Hamiltonianly isotopic'', we conclude that all  $\varphi_t(L)$ are (symplectically) Hamiltonianly isotopic to $L$.

\end{itemize}

\end{proof}
Now  assume that $L\cap A=\emptyset$. Then there exists an open set $U$ in $M$ such that $A\subset U$, $L\cap U=\emptyset$ and $\forall t>0, \varphi_t(\overline U)\subset U$. Since $L$ is compact, $A$ is the global attractor and $U$ is forward invariant, there exists $T>0$ such that $\forall x\in L, \forall t\geq T, \varphi_t(x)\in U$. We then obtain  that $L\cap\varphi_T(L)=\emptyset$, which contradicts the fact that $L$ and $\varphi_T(L)$ are Hamiltonianly isotopic according to Lemma \ref{LconformalnotconformaHamisotLag}.
\end{proof}

We recall that a function is {\sl coercive} if it tends to $+\infty$ at infinity.

\begin{proposition}[Arnaud-Su-Zavidovique, \cite{ArnaudSuZavidovique2015}] \label{PropExistGlobAttr}
Assume that $H:T^*M\to \mathbb R$ is $C^{1, 1}$,  convex in the fiber direction and coercive. Then  the $(\alpha, H)$ conformal Hamiltonian vector field $X_H^\alpha$ is complete in positive times and has a global attractor.
\end{proposition}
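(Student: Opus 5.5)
The plan is to use $H$ itself as a Lyapunov function: along the flow of $X=X_H^\alpha$ it decreases at an exponential rate down to a fixed level. Coercivity then turns its sublevel sets into compact trapping regions. Throughout, $M$ is the closed base and $\alpha>0$, so that $L_X\omega=-\alpha\omega$ and $\omega$ is contracted.

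\emph{Step 1: the derivative of $H$ along the flow.} From $i_X\omega=\alpha\lambda+dH$ we get
$$dH(X)=\omega(X,X)-\alpha\lambda(X)=-\alpha\lambda(X).$$
Write $X=\alpha Z_\lambda+X_H$, as in the Remark of Section \ref{sec:1}. In canonical coordinates $\lambda=p\,dq$ and $Z_\lambda=-p\,\partial_p$, so $\lambda(Z_\lambda)=0$. Moreover $\lambda(X_H)=p\cdot\partial_pH$, because the $\dot q$-component of $X_H$ is $\partial_pH$. Hence
$$\frac{d}{dt}H(\varphi_t x)=-\alpha\, p\cdot\partial_pH(q,p)\Big|_{\varphi_t x}.$$

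\emph{Step 2: convexity gives a differential inequality.} This is the key step. Convexity of $H$ in the fibre gives
$$H(q,0)\geq H(q,p)+\partial_pH(q,p)\cdot(0-p),$$
that is, $p\cdot\partial_pH(q,p)\geq H(q,p)-H(q,0)$. Set $C=\max_{q\in M}H(q,0)$, which is finite since $M$ is compact. Then
$$\frac{d}{dt}H(\varphi_tx)\leq-\alpha\big(H(\varphi_tx)-C\big),$$
and Gronwall's lemma yields
$$H(\varphi_tx)\leq C+e^{-\alpha t}\big(H(x)-C\big)$$
for all $t\geq0$ in the domain of the solution.

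\emph{Step 3: completeness and the attractor.} Since $H$ is $C^{1,1}$, the field $X$ is locally Lipschitz, so local solutions exist and are unique. By Step 2, the positive orbit of $x$ stays in the sublevel set $\{H\leq\max(H(x),C)\}$, which is compact by coercivity. A solution confined to a compact set cannot blow up, so $X$ is complete in positive times.

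Now set $U=\{H<C+1\}$. This is open and relatively compact, and it is nonempty because it contains the zero-section. For $t>0$, Step 2 gives
$$\varphi_t(\overline U)\subset\{H\leq C+e^{-\alpha t}\}\subset U.$$
Define $K=\bigcap_{t\geq0}\varphi_t(U)$. Since $\varphi_t(U)\subset\varphi_t(\overline U)\subset\varphi_s(U)$ for $t>s$, we also have $K=\bigcap_{t\geq 0}\varphi_t(\overline U)$. This is a decreasing intersection of nonempty compact sets, so $K$ is compact and nonempty, and therefore an attractor in the sense of Definition \ref{Dattr+globattr}.

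\emph{Step 4: $K$ is global.} Every point $x$ enters $U$ at some time $t_0$, since $C+e^{-\alpha t}(H(x)-C)<C+1$ for $t$ large. Let $V$ be any neighbourhood of $K$. The sets $\varphi_t(\overline U)\setminus V$ are compact and decreasing with empty intersection, so by the compactness argument of Proposition \ref{Pattractordefstrongereq} there is $T$ with $\varphi_T(\overline U)\subset V$. Hence $\varphi_{t_0+T}(x)\in V$, and $K$ is the global attractor.

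The only delicate point is the sign bookkeeping in Steps 1 and 2, namely identifying $\lambda(X)=p\cdot\partial_pH$ and applying convexity at the base point $p=0$. Everything after that is routine compactness.
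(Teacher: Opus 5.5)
Your proof is correct and follows the same route as the paper. You use $H$ as a Lyapunov function above the level $\sup_{q\in M}H(q,0)$, via the convexity inequality $p\cdot\partial_pH(q,p)\geq H(q,p)-H(q,0)$ at $p=0$, and you take the global attractor to be $\bigcap_{t\geq0}\varphi_t$ of a sublevel set of $H$. Your Gronwall estimate and the explicit compactness arguments only make precise the steps the paper summarizes in ``We can then conclude''.
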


A different proof of this proposition is given in \cite{MaroSorrentino2017} when $H$ is assumed to be Tonelli, which is a more restrictive condition than ours.

\begin{proof}[Proof of Proposition \ref{PropExistGlobAttr}]
 Let $H:T^*M\to\mathbb R$ be $C^{1, 1}$,  convex in the fiber direction and coercive. We prove that there exists $R>0$ such that $H$ is a strict Lyapunov function on $\{ H>R\}$. We can then conclude that $X_H^\alpha$ is complete in positive times and that $\bigcap_{t>0}\varphi_t(\{H\leq R+1\})$ is the global attractor (where $(\varphi_t)$ is the flow of $X_H^\alpha$).
 
 We use canonical coordinates $(q, p)$. 
 \begin{lemma}\label{LemmLyapfunct} Under the assumptions of Proposition \ref{PropExistGlobAttr}, suppose that $H(q, p)>R=\sup_{q\in M} H(q, 0)$ and let  $(q(t), p(t))$  be the orbit of $(q, p)$. Then $$\frac{d}{dt}(H(q(t), p(t))_{|t=0}<0.$$
 \end{lemma}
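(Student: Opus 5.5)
The plan is a direct computation in canonical coordinates: write the conformally Hamiltonian equations explicitly, differentiate $H$ along the flow, and then use convexity in the fibre to control the sign.

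\emph{Step 1: the equations of motion.} In canonical coordinates $(q,p)$ we have $\lambda=p\,dq$ and $\omega=-d\lambda=dq\wedge dp$. For $X=(\dot q,\dot p)$,
$$i_X\omega=\dot q\,dp-\dot p\,dq.$$
The defining relation $i_X\omega=\alpha\lambda+dH$ then reads $\dot q\,dp-\dot p\,dq=\alpha p\,dq+\frac{\partial H}{\partial q}dq+\frac{\partial H}{\partial p}dp$. Comparing coefficients gives
$$\dot q=\frac{\partial H}{\partial p},\qquad \dot p=-\frac{\partial H}{\partial q}-\alpha p .$$
This is consistent with the Liouville example, where the orbits are $(q,e^{-t}p)$.

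\emph{Step 2: the derivative of $H$.} Differentiating along the orbit,
$$\frac{d}{dt}H(q(t),p(t))=\frac{\partial H}{\partial q}\dot q+\frac{\partial H}{\partial p}\dot p=-\alpha\, p\cdot\frac{\partial H}{\partial p}(q,p).$$
The symplectic terms cancel, so only the dissipative part survives.

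\emph{Step 3: sign via convexity.} This is the only real point. Since $H(q,\cdot)$ is convex and $C^1$ on the fibre $T_q^*M$, the tangent inequality between $p$ and $0$ gives
$$H(q,0)\ge H(q,p)+\frac{\partial H}{\partial p}(q,p)\cdot(0-p).$$
Hence
$$p\cdot\frac{\partial H}{\partial p}(q,p)\ge H(q,p)-H(q,0)\ge H(q,p)-R>0$$
whenever $H(q,p)>R=\sup_q H(q,0)$. Here $R$ is finite because the zero section is compact; in particular $M$ is implicitly closed, since otherwise $H$ could not be coercive.

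\emph{Conclusion.} With $\alpha>0$, which is the dissipative convention used in this section, as in Proposition \ref{PglobalattmeetexLag}, Steps 2 and 3 give
$$\frac{d}{dt}H(q(t),p(t))_{|t=0}\le-\alpha\bigl(H(q,p)-R\bigr)<0.$$
This is the claim.

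The main obstacle is only keeping the sign conventions for $\omega$ and $\lambda$ straight in Step 1; the convexity argument in Step 3 is immediate once the derivative is written as $-\alpha\,p\cdot\partial_pH$. The stronger quantitative bound $\frac{d}{dt}H\le-\alpha(H-R)$ is what later gives completeness in positive time and the absorbing set $\{H\le R+1\}$.
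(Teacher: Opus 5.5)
Your proof is correct and follows the paper's argument: the paper also writes $\frac{d}{dt}H=-\alpha\,\partial_pH(q,p)\,p$ and uses fibrewise convexity (the tangent inequality at $0$) to bound this by $\alpha\bigl(H(q,0)-H(q,p)\bigr)<0$, while you additionally derive the equations of motion and make explicit the convention $\alpha>0$ that the paper leaves implicit. One side remark is inaccurate, though it does not affect the argument: coercivity of $H$ does not force $M$ to be closed (e.g.\ $\frac12(q^2+p^2)$ on $T^*\mathbb R$); rather, for non-compact $M$ coercivity forces $R=+\infty$, so compactness of the base is the implicit standing assumption that makes $R$ finite (otherwise the lemma is vacuous).
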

 \begin{proof}[Proof of Lemma  \ref{LemmLyapfunct}]
 By convexity, we have 
  $$\frac{d}{dt}(H(q(t), p(t))_{|t=0}=-\alpha \partial_pH(q, p)p\leq\alpha(H(q,0)-H(q,p))< 0.$$
 \end{proof}\end{proof}

Up to the end of this section, we will explain different examples. Ma\~n\'e's example is classical in the symplectic setting. Damped mechanical systems are also well-known examples. We will conclude with an example that has a closed, non-isotropic invariant hypersurface that is the global attractor.

\begin{example}[Ma$\tilde{n}$\'e example] Let $N$ be a closed manifold endowed with some Riemannian metric, we denote by $\|\cdot\|$ the corresponding norm and by $\langle\cdot , \cdot\rangle$ the corresponding scalar product.   Let $Y$ be your favorite vector field on $N$. We introduce $Y^\sharp=\langle Y, \cdot\rangle$ that is a section of $T^*N$. Then $H$ is defined on $T^*N$ by
$$H(q,p)=\frac{1}{2} \|p+Y^\sharp(q)\|^2-\frac{1}{2}\| Y^\sharp(q)\|^2$$
and $$X_H^\alpha(q,p)=(\langle p+Y^\sharp(q), \cdot\rangle, {}^tDY^\sharp(q)p-\alpha p).$$
Therefore, the zero-section is invariant and the restricted vector field is $Y$.

Note that if $Y$ is such that, for a certain $q_0$ (in chart), we have ${}^tDY^\sharp(q_0)=\alpha\mathbf 1$ and $Y(q_0)\neq 0$, then $X_H^\alpha(q_0, -Y^\sharp(q_0))=(0,0)$, the flow has a fixed point outside the zero-section. Hence the zero-section is not always the global attractor. In this case, we have a coercive and convex in the fiber  example where the global attractor is not a submanifold.
 
\end{example}

\begin{example}[Mechanical Hamiltonians]  Let $N$ be a closed manifold endowed with some Riemannian metric and let $V:N\to\mathbb R$ be a $C^{1, 1}$ function. The Hamiltonian associated to the potential $-V$ is defined by  
$H(q, p)=\frac{1}{2}\| p\|^2+V(q)$. Let $\alpha>0$ be a positive number.

If $(q(t), p(t))$ is an orbit for $X_H^\alpha$, then we have
$$\frac{d}{dt}H(q(t), p(t))=-\alpha\| p\|^2.$$
So $H$ is a Lyapunov function, which is non increasing along the orbits. Proposition \ref{PropExistGlobAttr} implies that $\omega(q(0), p(0))$ is compact. Then any point $(q_\infty, p_\infty)$ in $\omega(q(0),p(0))$ satisfies $H(q_\infty, p_\infty)=\lim_{t\to +\infty}H(q(t), p(t))$ and therefore $-\alpha\| p_\infty\|^2=0$, that is 
$p_\infty=0$. Therefore,  $\omega(q,p)$ is an invariant set in the  zero-section, and we have $0=\dot p_\infty=-dV(q_\infty)$, that is,  $(q_\infty, p_\infty)$ is a critical point of $H$. We have thus proven that $T^*N$ is the stable subset of the  set of  critical points.

Let $A$ be the global attractor and let $(q_\infty, p_\infty)$ be a point of $A$. Then the negative orbit of $(q_\infty, p_\infty)$ is contained in the compact set $A$, and the $\alpha$-limit set of $(q_\infty, p_\infty)$ is compact. The same proof as before applies in negative times, and therefore $(q_\infty, p_\infty)$ is in the unstable subset  of the set of critical points. We deduce from  Proposition \ref{Pglogattrmax} that $A$ is    the unstable set  of the set of critical points.

In the case of the damped pendulum, this attractor  is not a smooth submanifold but a $C^0$-submanifold (a spiral), see e.g. \cite{ArnHumVit2024}. More generally, when le critical points are all hyperbolic, the global attractor is the union of the unstable submanifolds of the critical points. 

\end{example} 

We now give some ideas for constructing an example where the global attractor is
a submanifold that is not isotropic. The detailed construction is given in Proposition 1 of  \cite{ArnaudFejoz2024}.  Note that this example is very close to an example of Geiges, \cite{Geiges1995}, even though it is different. A recent article of Cieliebak,   Lazarev,   Moreno,  Massoni, \cite{CieLazMasMo2022} suggests that Geiges'example does not contain any exact Lagrangian submanifolds. It would be interesting to know if this is the case of our example. Indeed, when there is no closed exact Lagrangian submanifold,  there is no theory of Birkhoff attractor as developped in \cite{ArnHumVit2024}.
\begin{example}[An example with a global attractor that is a non-isotropic submanifold, Arnaud-Fejoz,  \cite{ArnaudFejoz2024}]\label{ExArnFejnonisot}\phantom{fish}
The idea is to use the suspension of an Anosov flow, as  done in \cite{ArAv1968}.
We consider a  linear  Anosov diffeomorphism $A:\mathbb T^2\to\mathbb T^2$ with two eigenvalues $0<\lambda_-=\frac{1}{\lambda_+}<1<\lambda_+$, for example the one with matrix $\begin{pmatrix} 2&1\\ 1&1\end{pmatrix}$. We choose  $v_\pm$, two eigenvectors associated to $\lambda_\pm$. 

We endow $\mathbb T^2\times \mathbb R$ with the equivalence relation $\sim$ defined by
$$\forall (\xi, z)\in \mathbb T^2\times \mathbb R, (\xi, z)\sim F(\xi, z)=(A\xi, z-1).$$
Then $N$ is the quotient manifold $N=\mathbb T^2\times \mathbb R/\sim$. The vector field $\partial_z$ generates the flow $(\xi, z)\mapsto (\xi, z+t)$. The first return map on $\mathbb T^2\times \{ 0\}$ is $(\xi, 0)\mapsto (A\xi, 0)$. Therefore,  the tangent space to $N$ is the sum of an expanding direction $E_+$ (associated to $\lambda_+$), a contracting direction $E_-$  (associated to $\lambda_-$) and the central direction generated by the vector field $\partial_z$. 

Since we want to construct a conformally symplectic flow, we introduce on $N\times \mathbb R$ a symplectic form $\omega=dq_1\wedge dp_1+dq_2\wedge dp_2$ where the coordinate $q_1$ corresponds to the direction of $E_-$, $p_1$ corresponds to the direction of the vector field $\partial_z$, $q_2$ corresponds to the direction of $E_+$ and $p_2$ to the direction $\mathbb R$. Therfore,  $dq_1\wedge dp_1$ is contracted by $\lambda_-$ by the time-one map, and $dq_1$ is expanded by $\lambda_+=\lambda_-^{-1}$. We therefore define the flow on the $\mathbb R$-coordinate as $p_2\mapsto (\lambda_-)^{2t}p_2$. 

In doing so, we can prove that the manifold is exact symplectic and the vector field is $\log \lambda_+$ times the Liouville vector field, that there exists a global attractor which is $N\times \{ 0\}$, which is 3-dimensional, coisotropic but not isotropic.
\end{example}
\begin{remark}
Before concluding this section, we should mention that in his parallel course \cite{Viterbo2025}, Viterbo introduces the concept of Birkhoff attractor. Despite its name,  it is not generally an attractor. When the assumptions of Proposition \ref{PropExistGlobAttr} are satisfied, the Birkhoff attractor is contained in the global attractor. 
\end{remark} 
\section{Conformal dynamics on locally symplectic manifolds}
\label{sec:4}
We provide two equivalent definitions of the concept of locally symplectic manifold. We
also introduce conformal diffeomorphisms in this context. We provide several examples.
Next, focusing on the conformal Hamiltonian flows, we highlight conservative and dissipative behaviors,
which are closely related to the rotation number (i.e., the  $\eta$-shape) of the
orbit under consideration. We give an example of such a dynamic that has no periodic orbit.
Therefore, in this framework, Weinstein's conjecture is false.

\begin{definition}\phantom{fish}
$\bullet$ A {\sl locally symplectic manifold} is a manifold $M^{(2d)}$ endowed with an atlas $\mathcal A$ of charts $\psi_i:U_i\to\mathbb R^{2d}$ such that on $U_i\cap U_j$, we have $\psi_i^*\omega_\text{stand}=c_{ij}\psi_j^*\omega_\text{stand}$ for certain $c_{ij}>0$ where $\omega_\text{stand}$ is the standard symplectic form on $\mathbb R^{2d}$.\\
$\bullet$ A diffeomorphism $f: M\to M$ is conformal if there exists $c_{ij}(f)>0$ such that 
$$(\psi_i^{-1}\circ  f\circ \psi_j)^*\omega_\text{stand}=c_{ij}(f)\omega_\text{stand}$$
when the expression makes sense.
\end{definition}
We define $B$ as the half-line bundle of the 2-forms $\omega_i=\psi_i^*\omega_\text{stand}$ above $M$ and we consider a section $\Omega$ of this bundle such that, in each chart, we have $\Omega=e^{a_i(x)}\omega_i$. Then $d\Omega=da_i\wedge \Omega$ and on $U_i\cap U_j$, we have $e^{a_i(x)}=c_{ij}e^{a_j(x)}$, from which $a_i(x)=a_j(x)+\log c_{ij}$ and $da_i$ is independent of $i$ and defines a closed 1-form $\eta$ on $M$, such that $d\Omega=\eta\wedge \Omega$.

If we take another section $\Omega'=e^{b(x)}\Omega$, then $d\Omega'=(\eta+db(x))\wedge \Omega'$.

This leads us to introduce the definition of the {\sl conformal symplectic  structure}.

\begin{definition}\phantom{fish}
$\bullet$ A {\sl conformal pair} on the   manifold $M^{(2d)}$ is a pair $(\Omega, \eta)\in \Omega^2(M)\times \Omega^1(M)$ such that $d\eta=0$ and $d\Omega=\eta\wedge\Omega$ and $\Omega^{\wedge d}\neq 0$. \\
$\bullet$ Two such pairs $(\Omega, \eta)$ and $(\Omega_1, \eta_1)$ are equivalent if $(\eta_1, \Omega_1)=(\eta+db, e^b\Omega)$ for some $b:M\to\mathbb R$.\\
$\bullet$ An equivalence class is a {\sl conformal symplectic structure}. \\
$\bullet$ A conformal pair being fixed, $\Omega$ is called the {\sl conformal form} and $\eta$ is the {\sl Lee form}.
\end{definition}
\begin{remark}\phantom{fish}
\begin{enumerate}
\item Given such a a conformal structure, we can be reconstructed an atlas that endows $M$ with a locally symplectic structure. The proof is given in \cite{ChaMur2019}.  
\item If $(\Omega, \eta)$ is a conformal pair and $\theta$ is a primitive of the closed 1-form $\eta$ on an open subset $U$ of $M$, then $e^{-\theta}.\Omega$ is a symplectic form because $d(e^{-\theta}.\Omega)=e^{-\theta}d_\eta \omega=0$. 
In particular, when $\eta$ is exact, $(\omega, \eta=d\theta)$ is  equivalent to the symplectic pair $(e^{-\theta}\Omega, 0)$. 
\item More globally,  consider $\pi_\eta: M_\eta\to M$, which is the lift of $M$ defined by $[\eta]$, i.e. such that for any lift $\pi:\tilde{M}\to M$ such that $\pi^*\eta$ is exact,  there exists $p:\tilde{M}\to M_\eta$ such that $\pi=\pi_\eta\circ p$. Let $\Theta$ be a primitive of $\pi^*_\eta\eta$. Then $\Omega_\eta=e^{-\Theta}\pi_\eta^*\Omega$ is a symplectic form on $M_\eta$.
\item Note that a diffeomorphism $f:M\to M$ is conformal if and only if there exists a function $a:M\to \mathbb R$ such that $f^*\Omega=e^a\Omega$. 
\end{enumerate}
\end{remark}
\begin{exercise} 
Suppose that $(\Omega, \eta)$ is a conformal pair on $M$ such that $\dim M\geq 4$ and that $f^*\Omega=e^a\Omega$.
\begin{enumerate}
\item Prove that   $f^*\eta=\eta+da$. 
\item Let $\pi_\eta: M_\eta\to M$ be the lift of $M$ defined by $[\eta]$ and let $\Theta$ be a primitive of $\pi_\eta^*\eta$. We denote by $\Omega_\eta$ the symplectic form $e^{-\Theta}\pi_\eta^*\Omega$ on $M_\eta$ and by $F:M_\eta\to M_\eta$ a lift of $f$. Using the result of Exercise \ref{ExoLibermann}, prove that there exists a constant $C$ such that $a=\Theta\circ F-\Theta+C$.
\end{enumerate}
\end{exercise} 
Therefore, from now on, a conformal diffeomorphism will be $f:M\to M$ such that there exists $a:M\to \mathbb R$ such that $f^*\Omega=e^a\Omega$ and $f^*\eta=\eta+da$. \\

\noindent{\bf Notation.} If $\eta$ is a closed $1$-form on $M$, the {\sl De Rham-Lichnerowicz differential} $d_\eta:\Omega(M)\to\Omega(M)$ is defined by $d_\eta \nu=d\nu-\eta\wedge\nu$. We then  have $d_\eta\circ d_\eta=0$.

\begin{example}[Examples of locally symplectic manifolds]\phantom{fish}
\begin{enumerate}
\item Any surface endowed with a symplectic form and a closed 1-form is locally symplectic.
\item Let us describe the (twisted) conformal symplectization of a contact manifold as it is introduced in \cite{AllArna2024}. Recall that a contact form on $Y^{(2n+1)}$ is a 1-form $\alpha$ such that everywhere, we have $\alpha\wedge (d\alpha)^{\wedge n}\neq 0$. Let $\beta$ be a closed 1-form on $Y$. The {\sl $\beta$-twisted symplectization } of $(Y, \alpha)$ is $Y\times \mathbb T$ endowed with the Lee form $\eta=\pi^*\beta-d\theta$ and the conformal form $\Omega=-d_\eta(\pi^*\alpha)$ where $\theta:Y\times \mathbb T\to \mathbb T$ and $\pi: Y\times \mathbb T\to Y$ are the projections.\\
For example, $\mathbb S^3\times \mathbb S^1$ (with $\beta=0$) can be endowed with a locally symplectic structure but not with a symplectic form (as $H^2=0$).
\end{enumerate}
\end{example}
\begin{definition} Suppose that the manifold $M$ is endowed with the conformal pair $(\Omega, \eta)$.
Let $H:M\to\mathbb R$ be a $C^{1,1}$  function (meaning that $dH$ is locally Lipchitz). 
\begin{itemize}
\item The {\sl conformally Hamiltonian vector field} is defined by 
$$i_X\Omega =d_\eta H= dH-H.\eta.$$
\item If the flow $(\varphi_t)$ is defined on $\{ x\}\times [0, T]$, the rotation number is 
$$r_T(x)=\int_0^T\eta(X\circ \varphi_t(x))dt.$$
\item When $H=1$, the associated vector field $L_\eta$ that satisfies $i_{L_\eta}\Omega=-\eta$ is called the {\sl Lee vector field}. Since $\eta(L_\eta)=0$,   when it is defined, the rotation number is zero.
\end{itemize} 
\end{definition}
\begin{proposition}\label{Pfactconforme} Let $(\Omega, \eta)$ be a conformal pair on the manifold $M$. Let $(\varphi_t)$ be the conformal Hamiltonian flow associated with a Hamiltonian $H:M\to\mathbb R$. Then, if we denote by $r_t$ the associated rotation number, we have 
$$\varphi_t^*\Omega=e^{r_t}\Omega\text{ and } H\circ \varphi_t=e^{r_t}H.$$
We deduce for the flow $(\psi_t)$ of the Lee vector field that
$$\psi_t^*\Omega=\Omega\text{ and  } H\circ \psi_t=H.$$

\end{proposition}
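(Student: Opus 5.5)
The plan is to compute the Lie derivative of $\Omega$ and the derivative of $H$ along $X$, show both are multiplication by the function $\eta(X)$, and then integrate along orbits. The rotation number is exactly the time integral of $\eta(X)\circ\varphi_t$, which produces the factor $e^{r_t}$.

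\emph{Step 1: the Lie derivative of $\Omega$.} By Cartan's formula, $L_X\Omega=d(i_X\Omega)+i_X d\Omega$. Using $i_X\Omega=dH-H\eta$ and $d\eta=0$, the first term is $d(dH-H\eta)=-dH\wedge\eta$. Using $d\Omega=\eta\wedge\Omega$, the second term is
$$i_X(\eta\wedge\Omega)=\eta(X)\Omega-\eta\wedge i_X\Omega=\eta(X)\Omega-\eta\wedge dH+H\,\eta\wedge\eta .$$
Since $\eta\wedge\eta=0$ and $dH\wedge\eta+\eta\wedge dH=0$, the terms involving $dH$ cancel. This leaves
$$L_X\Omega=\eta(X)\,\Omega .$$
This cancellation is the only real computation, and getting the signs right in the graded Leibniz rule for $i_X$ is the main point of care.

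\emph{Step 2: the derivative of $H$.} Since $\Omega$ is antisymmetric, $0=\Omega(X,X)=(i_X\Omega)(X)=dH(X)-H\,\eta(X)$. Hence $dH(X)=\eta(X)H$.

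\emph{Step 3: integration along orbits.} Fix $x$ and set $g(t)=\eta(X)(\varphi_t x)$, so that $\frac{d}{dt}r_t(x)=g(t)$ and $r_0=0$. We have
$$\frac{d}{dt}\varphi_t^*\Omega=\varphi_t^*L_X\Omega=g(t)\,\varphi_t^*\Omega .$$
At the point $x$ this is a linear ODE in the fixed vector space $\Lambda^2T_x^*M$, with initial value $\Omega_x$. Its unique solution is $(\varphi_t^*\Omega)_x=e^{r_t(x)}\Omega_x$. Similarly, by Step 2, $\frac{d}{dt}H(\varphi_t x)=g(t)H(\varphi_t x)$, so $H\circ\varphi_t=e^{r_t}H$. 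The $C^{1,1}$ hypothesis gives a locally Lipschitz vector field, so the flow exists and these ODE arguments apply on the interval of definition.

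\emph{Step 4: the Lee vector field.} For $H=1$ we have $d_\eta 1=-\eta$, so the Lee vector field is the conformally Hamiltonian vector field of the constant function $1$. Step 2 then gives $\eta(L_\eta)\cdot 1=d(1)(L_\eta)=0$, so $r_t\equiv0$. Steps 1–3 then yield $\psi_t^*\Omega=\Omega$. Finally, $H\circ\psi_t=H$ for any $C^{1,1}$ function $H$, since $dH(L_\eta)=\Omega(L_\eta,X_H)$ and $\eta(X_H)$ can be related as follows. The precise claim for a general $H$ uses $i_{L_\eta}(i_{X_H}\Omega)=dH(L_\eta)-H\eta(L_\eta)=dH(L_\eta)$, together with $i_{X_H}(i_{L_\eta}\Omega)=-\eta(X_H)$.

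So I would read the second displayed equality as the statement for $H=1$, which is trivially invariant. For a general $H$, I would check whether the paper intends the Lee flow to be associated with the same $H$. This interpretive point, rather than any computation, is the step I expect to need care.
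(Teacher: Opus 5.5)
Your Steps 1--3 are the paper's proof: Cartan's formula with the same cancellation of the $dH\wedge\eta$ terms, then $dH(X)=\eta(X)H$ from $i_X\Omega=dH-H\eta$, with the integration of the scalar linear ODEs along orbits written out. Your treatment of the Lee field ($\eta(L_\eta)=0$, hence $r_t\equiv 0$ and $\psi_t^*\Omega=\Omega$) is likewise the paper's deduction.

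One correction in Step 4: delete the sentence asserting $H\circ\psi_t=H$ for every $C^{1,1}$ function $H$. The computation you start there actually gives $dH(L_\eta)=\Omega(X_H,L_\eta)=-\Omega(L_\eta,X_H)=\eta(X_H)$, which does not vanish in general. For instance, in Example~\ref{ExampleHamconf}(1) with $H=\sin 2\pi\theta_2$ one finds $L_\eta=(0,-2\pi)$ and $dH(L_\eta)=-4\pi^2\cos 2\pi\theta_2$. Your final reading, taking $H=1$ (the Lee field's own Hamiltonian), is the one under which the second identity holds, and it is exactly what specializing the first part gives.
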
 

\begin{proof}[Proof of Proposition \ref{Pfactconforme}]
We have 
\[\begin{split}
L_X\Omega&= d(i_X\Omega)+i_X(d\Omega)=d(dH-H\eta)+i_X(\eta\wedge\Omega)\\
&=-dH\wedge \eta +\eta(X)\Omega-\eta\wedge i_X\Omega\\
&=\eta\wedge dH+\eta(X)\Omega-\eta\wedge(dH-H\eta)=\eta\wedge dH+\eta(X)\Omega-\eta\wedge dH\\
&= \eta(X)\Omega
\end{split}\]
which gives the first formula.\\
From $i_X\Omega=dH-H\eta$ we deduce $dH(X)=\eta(X)H$ and this gives the second formula.

\end{proof} 
\begin{exercise}
Prove that we  also have: $\eta\circ \varphi_t=\eta+dr_t$.
\end{exercise}

\begin{proposition}\label{PconfpairHam}
Let $(\Omega, \eta)$ be a conformal pair on the manifold $M$.
\begin{enumerate}
\item If $\eta$ is not exact, the map $\mathcal H_{\Omega, \eta}:H\mapsto X_H$, that maps a  function onto the associated Hamiltonian vector field, is injective. 
\item If $(\Omega', \eta')$ is a different  conformal pair but  equivalent to $(\Omega, \eta)$, then the map $\mathcal H_{\Omega', \eta'}$ is not $\mathcal H_{\Omega, \eta}$, but both maps have the same image. More precisely, we have
$$\mathcal X_{\Omega, \eta}(H)=\mathcal X_{e^f\Omega, \eta+df}(e^fH).$$
\item When $H>0$, $X_H$ is the Lee vector field for an equivalent conformal pair.
\end{enumerate}
\end{proposition}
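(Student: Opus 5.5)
Each item reduces to the defining identity $i_X\Omega=dH-H\eta$ together with the non-degeneracy of $\Omega$. The latter means $\Omega$ gives a bundle isomorphism $v\mapsto i_v\Omega$ from $TM$ to $T^*M$, so a vector field is determined by the $1$-form $i_X\Omega$. I would treat the three items in order.

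\emph{Item (1).} The map $\mathcal H_{\Omega,\eta}$ is linear, so it suffices to show its kernel is trivial. Suppose $X_H=0$. Then $dH=H\eta$. Let $U=\{H\neq 0\}$, which is open. On $U$ we have $\eta=d(\log|H|)$, and $\log|H|$ blows up at the boundary of $U$. I would argue globally (taking $M$ connected) that $U$ is empty. Since $dH=H\eta$, the function $H$ satisfies a linear ODE along every curve $\gamma$: $\frac{d}{ds}H(\gamma(s))=\eta(\dot\gamma)\,H(\gamma(s))$. Hence
$$H(\gamma(1))=H(\gamma(0))\exp\Big(\int_\gamma\eta\Big).$$
So if $H$ vanishes at one point it vanishes everywhere, by connectedness and path-connectedness. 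Otherwise $H$ never vanishes, and then $\eta=d\log|H|$ is exact, which contradicts the assumption. Therefore $H=0$. Note that the path formula also shows directly that $\int_\gamma\eta=0$ for loops when $H\neq0$. The main subtlety of the whole proposition is this uniqueness-of-ODE step; the explicit formula along paths handles it cleanly.

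\emph{Item (2).} I compute directly with $\Omega'=e^f\Omega$, $\eta'=\eta+df$ and $H'=e^fH$:
$$d_{\eta'}H'=e^f\,dH+e^fH\,df-e^fH\,\eta-e^fH\,df=e^f\,d_\eta H.$$
Hence $i_X\Omega'=e^f\,i_X\Omega$ equals $d_{\eta'}H'$ if and only if $i_X\Omega=d_\eta H$. This gives $\mathcal X_{\Omega,\eta}(H)=\mathcal X_{e^f\Omega,\eta+df}(e^fH)$. Since $H\mapsto e^fH$ is a bijection of functions, the two maps have the same image. They are different maps when $f\neq0$, because the same $H$ is sent to the vector field of $e^{-f}H$ for the other pair, and injectivity (or an easy example) distinguishes them.

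\emph{Item (3).} If $H>0$, set $f=-\log H$. By item (2), $X_H$ for $(\Omega,\eta)$ equals the vector field of $e^fH=1$ for the equivalent pair $(H^{-1}\Omega,\eta-d\log H)$. By definition, the vector field of the constant Hamiltonian $1$ is the Lee vector field of that pair.
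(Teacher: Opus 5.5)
Your proof is correct. Items (2) and (3) follow the paper's argument: the paper leaves the identity $d_{\eta+df}(e^fH)=e^f\,d_\eta H$ to the reader and takes $f=-\log H$ for (3), exactly as you do. You also justify that the two maps differ when $f\neq 0$, which the paper only asserts.

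For item (1) your route differs somewhat. The paper passes to the cover $\pi_\eta: M_\eta\to M$, where $\pi_\eta^*\eta=d\theta$. It notes that $dH=H\eta$ makes $e^{-\theta}H\circ\pi_\eta$ constant. It then uses non-exactness of $\eta$ to find points $y_n$ with $\theta(y_n)\to+\infty$, which forces the constant to vanish.

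You instead integrate $dH=H\eta$ along paths in $M$ itself. This gives a dichotomy: $H$ vanishes identically or nowhere, and in the second case $\eta=d\log|H|$ is exact. Your path formula is precisely the paper's ``$e^{-\theta}H\circ\pi_\eta$ is constant'' read along lifted paths, so the mechanism is the same. Your version, however, needs no covering space. It also sidesteps a point the paper glosses over: $\theta(y_n)\to+\infty$ only kills the constant if $H$ is bounded (e.g.\ $M$ compact), or if the $y_n$ are taken in a single fibre of $\pi_\eta$ by iterating a loop with nonzero $\eta$-period.

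What the paper's formulation buys is coherence with the rest of Section 4. On $(M_\eta,\Omega_\eta)$ the lift of $X_H$ is the ordinary Hamiltonian vector field of $e^{-\theta}H\circ\pi_\eta$, so injectivity reduces to ``a Hamiltonian with zero vector field is constant.'' Both arguments tacitly need $M$ connected, which you rightly make explicit.
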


\begin{example}\label{ExampleHamconf}\phantom{fish} \begin{enumerate}
\item We consider on $\mathbb T^2$ the conformal pair $(\Omega, \eta)=(d\theta_1\wedge d\theta_2, -2\pi d\theta_1)$. We consider the  following two Hamiltonians.

\begin{multicols}{2}
$H(\theta_1, \theta_2)=\sin 2\pi\theta_1$\\
$X=(0, -2\sqrt{2}\pi\sin 2\pi(\frac{1}{8}+\theta_1))$\\

\begin{tikzpicture}
        \draw[->] (-1,1) -- (-1,0) ;
         \draw  (-1,0) -- (-1,-1) node[below]{$0$} ;
         \draw[->] (-0.75,1)  -- (-0.75,0.05) ;
          \draw[->] (-0.75,0.05) -- (-0.75,-0.05) ;
         \draw  (-0.75,-0.05) -- (-0.75,-1)  ;
          \draw[->] (-0.5,1) -- (-0.5,0) ;
         \draw  (-0.5,0) -- (-0.5,-1)  ;
          \draw  [dashed]  (-0.25,1) -- (-0.25,-1)   ;
          \draw[->] (0,-1)node[below]{$1/2$} -- (0,0) ;
         \draw  (0,0) -- (0,1) ;
         \draw[->] ( 0.25,-1) -- (0.25,-0.05) ;
          \draw[->] (0.25,-0.05) -- (0.25, 0.05) ;
         \draw  (0.25,0.05) -- (0.25,1)   ;
          \draw[->] ( 0.5,-1) -- (0.5,0) ;
          \draw (0.5, 0)  -- (0.5, 1) ;
          \draw [dashed]  (0.75,-1)--(0.75,1)   ;
          \draw[->] (1,1)--(1,0);
          \draw (1,0)--(1,-1) node[below]{$1$} ;
           \end{tikzpicture}
           
           $H(\theta_1, \theta_2)=\sin 2\pi\theta_2$\\
$X=(2\pi\cos 2\pi\theta_2, -2\pi\sin 2\pi \theta_2)$\\

\begin{tikzpicture} 
\draw[->] ( -2,1) -- (0,1) ;
\draw( -2,1) -- (2,1) ;
\draw[->][domain=-2:-0.25][samples=200] plot [variable=\x]  ({\x*\x-2},{0.5+((exp(\x)-exp(-\x))/((2*(exp(\x))+(2*exp(-\x))))});
\draw[->][domain=-0.25:0.3][samples=200] plot [variable=\x]  ({\x*\x-2},{0.5+((exp(\x)-exp(-\x))/((2*(exp(\x))+(2*exp(-\x))))});
\draw[domain=-2:2][samples=200] plot [variable=\x]  ({\x*\x-2},{0.5+((exp(\x)-exp(-\x))/((2*(exp(\x))+(2*exp(-\x))))});
\draw[->][domain=-1.7:-0.25][samples=200] plot [variable=\x]  ({\x*\x-1},{0.5+((exp(\x)-exp(-\x))/((2*(exp(\x))+(2*exp(-\x))))});
\draw[->][domain=-0.25:0.3][samples=200] plot [variable=\x]  ({\x*\x-1},{0.5+((exp(\x)-exp(-\x))/((2*(exp(\x))+(2*exp(-\x))))});
\draw[domain=-1.7:1.7][samples=200] plot [variable=\x]  ({\x*\x-1},{0.5+((exp(\x)-exp(-\x))/((2*(exp(\x))+(2*exp(-\x))))});
\draw[->][domain=-1.4:-0.25][samples=200] plot [variable=\x]  ({\x*\x},{0.5+((exp(\x)-exp(-\x))/((2*(exp(\x))+(2*exp(-\x))))});
\draw[->][domain=-0.25:0.3][samples=200] plot [variable=\x]  ({\x*\x},{0.5+((exp(\x)-exp(-\x))/((2*(exp(\x))+(2*exp(-\x))))});
\draw[domain=-1.4:1.4][samples=200] plot [variable=\x]  ({\x*\x},{0.5+((exp(\x)-exp(-\x))/((2*(exp(\x))+(2*exp(-\x))))});
\draw[->][domain=-1:-0.25][samples=200] plot [variable=\x]  ({\x*\x+1},{0.5+((exp(\x)-exp(-\x))/((2*(exp(\x))+(2*exp(-\x))))});
\draw[->][domain=-0.25:0.3][samples=200] plot [variable=\x]  ({\x*\x+1},{0.5+((exp(\x)-exp(-\x))/((2*(exp(\x))+(2*exp(-\x))))});
\draw[domain=-1:1][samples=200] plot [variable=\x]  ({\x*\x+1},{0.5+((exp(\x)-exp(-\x))/((2*(exp(\x))+(2*exp(-\x))))});

\draw[->] ( 2,00) -- (0,0) ;
\draw (0,0)--(-2, 0);
\draw[->][domain=-2:-0.25][samples=200] plot [variable=\x]  ({\x*\x-2},{-0.5+((exp(-\x)-exp(\x))/((2*(exp(\x))+(2*exp(-\x))))});
\draw[->][domain=-0.25:0.3][samples=200] plot [variable=\x]  ({\x*\x-2},{-0.5+((exp(-\x)-exp(\x))/((2*(exp(\x))+(2*exp(-\x))))});
\draw[domain=-2:2][samples=200] plot [variable=\x]  ({\x*\x-2},{-0.5+((exp(-\x)-exp(\x))/((2*(exp(\x))+(2*exp(-\x))))});
\draw[->][domain=-1.7:-0.25][samples=200] plot [variable=\x]  ({\x*\x-1},{-0.5+((exp(-\x)-exp(\x))/((2*(exp(\x))+(2*exp(-\x))))});
\draw[->][domain=-0.25:0.3][samples=200] plot [variable=\x]  ({\x*\x-1},{-0.5+((exp(-\x)-exp(\x))/((2*(exp(\x))+(2*exp(-\x))))});
\draw[domain=-1.7:1.7][samples=200] plot [variable=\x]  ({\x*\x-1},{-0.5+((exp(-\x)-exp(\x))/((2*(exp(\x))+(2*exp(-\x))))});
\draw[->][domain=-1.4:-0.25][samples=200] plot [variable=\x]  ({\x*\x},{-0.5+((exp(-\x)-exp(\x))/((2*(exp(\x))+(2*exp(-\x))))});
\draw[->][domain=-0.25:0.3][samples=200] plot [variable=\x]  ({\x*\x},{-0.5+((exp(-\x)-exp(\x))/((2*(exp(\x))+(2*exp(-\x))))});
\draw[domain=-1.4:1.4][samples=200] plot [variable=\x]  ({\x*\x},{-0.5+((exp(-\x)-exp(\x))/((2*(exp(\x))+(2*exp(-\x))))});
\draw[->][domain=-1:-0.25][samples=200] plot [variable=\x]  ({\x*\x+1},{-0.5+((exp(-\x)-exp(\x))/((2*(exp(\x))+(2*exp(-\x))))});
\draw[->][domain=-0.25:0.3][samples=200] plot [variable=\x]  ({\x*\x+1},{-0.5+((exp(-\x)-exp(\x))/((2*(exp(\x))+(2*exp(-\x))))});
\draw[domain=-1:1][samples=200] plot [variable=\x]  ({\x*\x+1},{-0.5+((exp(-\x)-exp(\x))/((2*(exp(\x))+(2*exp(-\x))))});\draw[->] ( -2,-1) -- (0,-1) ;
\draw( -2,-1) -- (2,-1) ;

\draw (-2,1) node[left]{1} ;
\draw (-2,0) node[left]{1/2} ;
\draw (-2,-1) node[left]{0} ;
\end{tikzpicture}
  
\end{multicols}
Note that the picture on the left ressembles a (symplectic) Hamiltonian dynamics, but the picture on the right  is highly dissipative, with an  attractive periodic orbit and one repulsive periodic orbit.
\item Let $(Y,\alpha)$ be a contact manifold and let $\beta$ be a closed 1-form on $Y$. Let $H:Y\to \mathbb R$ be a function, we recall that the associated contact vector field $X$ is defined by
$$\alpha(X)=H\text{ and }  i_Xd\alpha=(dH.R)\alpha-dH$$
where $R$ is the Reeb vector-field (associated with the constant Hamiltonian 1). On the $\beta$ twisted conformal symplectization of $(Y, \alpha)$, the associated to $\tilde H(x,\theta)=H(x)$ conformal vector field   is
$$\tilde X=X\oplus (\beta(X)-dH.R)\partial_\theta\in TY\oplus T\mathbb T$$
and the Lee vector field is $R\oplus \beta(R)\partial_\theta$.

\end{enumerate}

\end{example}
\begin{proof}[Proof of Proposition \ref{PconfpairHam}]
\begin{enumerate}
\item We assume that  $\eta$ is not exact and consider $\pi_\eta: M_\eta\to M$ which is the lift of $M$ defined by $[\eta]$. We want to prove that the kernel of the map $\mathcal H_{\Omega, \eta}:H\mapsto X_H$ is $\{ 0\}$. Let $H\in \ker \mathcal H_{\Omega, \eta}$.  Let $\theta$ be a primitive of $\pi_\eta^*\eta$. Then we have 
$d(e^{-\theta}H\circ \pi)=0$ so $e^{-\theta}H\circ \pi$ is constant. Since $\eta$   is not exact, there exists a sequence $(y_n)$ in $M_\eta$ such that $\lim_{n\to\infty}\theta(y_n)=+\infty$. We deduce that $H=0$ and therefore that   $\mathcal H_{\Omega, \eta}$ is injective.
\item We leave it to the reader to verify the formula
$$\mathcal X_{\Omega, \eta}(H)=\mathcal X_{e^f\Omega, \eta+df}(e^fH).$$
\item When $H>0$, we define $f=-\log H$. The previous formula gives
$$\mathcal X_{\Omega, \eta}(H)=\mathcal X_{\frac{1}{H}\Omega, \eta-\frac{dH}{H}}(1).$$
Thus,  $X_H$ is the Lee vector field for the  equivalent conformal pair $(\frac{1}{H}\Omega, \eta-\frac{dH}{H})$.
\end{enumerate}
\end{proof}
We endow $\mathbb T^2$ with its usual flat Riemannian metric. 
\begin{proposition}\label{PNoWeinstein} Let $\beta =a_1dx_1+a_2dx_2$ be a constant 1-form defined on $\mathbb T^2$ such that $1$, $a_1$ and $a_2$ are rationally independent. Let $T^1\mathbb T^2$ be the unit tangent vector bundle of $\mathbb T^2$, which is a contact manifold and let us consider its $\beta$ twisted conformal symplectization. Then the Lee vector field has no periodic orbit.
\end{proposition}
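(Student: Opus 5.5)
The plan is to compute the Lee vector field explicitly using the formula from Example \ref{ExampleHamconf}(2), and then to read off the absence of periodic orbits from the rational independence of $1, a_1, a_2$. On the $\beta$-twisted conformal symplectization $Y\times\mathbb T$ of $(Y,\alpha)$, the Lee vector field is $R\oplus \beta(R)\,\partial_\theta$, where $R$ is the Reeb field of $\alpha$. Here $\beta$ is understood as pulled back to $Y=T^1\mathbb T^2$ via the footpoint projection $p:T^1\mathbb T^2\to\mathbb T^2$.

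First, fix the contact structure. Write $T^1\mathbb T^2=\mathbb T^2\times\mathbb S^1$ with coordinates $(x_1,x_2,\phi)$, where $\phi$ is the angle of the unit vector $v=(\cos\phi,\sin\phi)$. The standard contact form is the restriction of the Liouville form under the metric identification:
$$\alpha=\cos\phi\,dx_1+\sin\phi\,dx_2 .$$
It is contact, since $d\alpha=-\sin\phi\,d\phi\wedge dx_1+\cos\phi\,d\phi\wedge dx_2$ and $\alpha\wedge d\alpha=\pm dx_1\wedge dx_2\wedge d\phi\neq0$. Its Reeb field is the geodesic field
$$R=\cos\phi\,\partial_{x_1}+\sin\phi\,\partial_{x_2},$$
because $\alpha(R)=1$ and $i_Rd\alpha=0$, which is a direct check. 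This is the flat geodesic flow: $\phi$ is constant along orbits and $x$ moves linearly.

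Next, compute the Lee field. We have $\beta(R)=a_1\cos\phi+a_2\sin\phi$, so the Lee flow starting at $(x,\phi,\theta)$ is
$$t\mapsto \bigl(x+t(\cos\phi,\sin\phi),\ \phi,\ \theta+t(a_1\cos\phi+a_2\sin\phi)\bigr).$$
This is a linear flow on the torus $\mathbb T^2\times\{\phi\}\times\mathbb T$ with velocity vector $w=(\cos\phi,\sin\phi,a_1\cos\phi+a_2\sin\phi)$. Such an orbit is periodic if and only if $Tw\in\mathbb Z^3$ for some $T>0$.

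Finally, argue by contradiction. Suppose $Tw=(m_1,m_2,k)\in\mathbb Z^3$ with $T>0$. Then $T\cos\phi=m_1$ and $T\sin\phi=m_2$ with $(m_1,m_2)\neq(0,0)$, since $T^2=m_1^2+m_2^2>0$. The third coordinate gives $a_1m_1+a_2m_2=k$, that is, $a_1m_1+a_2m_2-k\cdot1=0$ is a nontrivial rational relation among $1,a_1,a_2$. This contradicts the hypothesis, so no orbit is periodic.

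The main obstacle is convention-checking rather than a conceptual step. One must make sure that the Lee field of the twisted symplectization is indeed $R\oplus\beta(R)\partial_\theta$ with the paper's sign conventions ($\eta=\pi^*\beta-d\theta$). A change of sign in front of $\beta(R)$ would not matter, because the rationality argument is insensitive to signs. One must also confirm that the contact form on $T^1\mathbb T^2$ intended by the statement is the standard one, whose Reeb flow is the geodesic flow. If a different normalization of $\alpha$ is intended, the Reeb field is only rescaled by a constant, and the same argument applies.
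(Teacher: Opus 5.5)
Your proof is correct and follows the paper's route: you identify the Reeb flow of $T^1\mathbb T^2\cong\mathbb T^2\times\mathbb S^1$ with the flat geodesic flow, write the Lee flow as the linear flow $(x+tv,\,v,\,\theta+t(a_1v_1+a_2v_2))$, and turn periodicity into an integer relation among $1,a_1,a_2$. Your remark that $(Tv_1,Tv_2)\neq(0,0)$ because $|v|=1$ makes explicit why that relation is nontrivial, a point the paper leaves implicit.
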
  
Therefore,  there is no analogue   to Weinstein's conjecture in the conformally symplectic setting.

\begin{proof}[Proof of Proposition \ref{PNoWeinstein} ]
There is a natural diffeomorphism between $T^1\mathbb T^2$ and $\mathbb T^2\times \mathbb S^1$. Then the Reeb flow is the geodesic flow  given by
$\varphi_t^R(x, v)=(x+tv, v)$. Furthermore,  we have
$\int_0^t\beta(\partial_s\varphi_s^R(x,v))ds=t(a_1v_1+a_2v_2)$ and thus the Lee flow is
$$\psi_t(x, v, \theta)=(x+tv, v, \theta+t(a_1v_1+a_2v_2)).$$
The orbit is $T$-periodic if $Tv\in\mathbb Z^2$ and $T(a_1v_1+a_2v_2)\in\mathbb Z$, that is,  if $Tv_1\in\mathbb Z$, $Tv_2\in\mathbb Z$ and $(Tv_1)a_1+(Tv_2)a_2\in \mathbb Z$, which is impossible for $T\neq 0$
because $1$, $a_1$ and $a_2$ are rationally independent. 

\end{proof}

Looking to the first point in Example \ref{ExampleHamconf}, we see that
\begin{itemize}
\item in the  picture on the right, all orbits are periodic and the integral of $\eta$ along such an orbit is zero; no orbit is repulsive or attractive;
\item in the  picture on the   left, there are two periodic orbits, one repulsive and the other attractive; moreover, the integral of $\eta$ along such a periodic orbit is non-zero.
\end{itemize}
Let us explain why.
\begin{proposition}\label{Pexporbper} Assume that $\gamma:\mathbb R/T\mathbb Z\to M$ is a periodic orbit of the conformally Hamiltonian vector field $X$. Denote
$$\overline r(\gamma)=\frac{1}{T}\int_0^T\eta\circ X(\gamma(s))ds.$$
The characteristic multipliers $\lambda_1, \dots, \lambda_{2n}$ satisfy the relations
$$\lambda_{n-p}\overline\lambda_{n+p+1}=e^{T\overline r(\gamma)}.$$
Moreover, if $\overline r(\gamma)\neq 0$, then $H(\gamma(0))=0$.
\end{proposition}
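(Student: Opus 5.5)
The plan is to use Proposition \ref{Pfactconforme}: the linearized time-$T$ map $D\varphi_T(\gamma(0))$ is conformally symplectic for the bilinear form $\Omega(\gamma(0))$. Indeed $\varphi_T^*\Omega=e^{r_T}\Omega$ with
$$r_T(\gamma(0))=\int_0^T\eta(X(\gamma(s)))\,ds=T\overline r(\gamma).$$
Since $\varphi_T(\gamma(0))=\gamma(0)$, at this point we get $\Omega(D\varphi_T u,D\varphi_T v)=e^{T\overline r(\gamma)}\Omega(u,v)$. So $P=D\varphi_T(\gamma(0))$ is a linear map of the symplectic vector space $(T_{\gamma(0)}M,\Omega(\gamma(0)))$ satisfying $P^*\Omega=c\,\Omega$, where $c=e^{T\overline r(\gamma)}$.

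The linear-algebra step is standard. Rescale $Q=c^{-1/2}P$, so that $Q$ is symplectic. Its spectrum is then invariant under $\mu\mapsto\mu^{-1}$ and $\mu\mapsto\overline\mu$, with multiplicities preserved (as recalled in Section \ref{sec:3} via \cite{McDuffSalamon2017}). Equivalently, $P^{-1}=c^{-1}J^{-1}{}^tPJ$ in a Darboux basis, which shows the spectrum of $P$ is invariant under $\lambda\mapsto c/\lambda$. Combined with complex conjugation, it is invariant under $\lambda\mapsto c/\overline\lambda$. Ordering the multipliers by increasing modulus, $|\lambda_1|\le\dots\le|\lambda_{2n}|$, and suitably within equal moduli, this involution reverses the order. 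Hence $\lambda_{n-p}\overline\lambda_{n+p+1}=c$. I expect the only delicate point to be the bookkeeping of the ordering and of multiplicities when several multipliers share a modulus; I would handle it by pairing generalized eigenspaces $E_\lambda$ with $E_{c/\overline\lambda}$ via the $\Omega$-duality they induce.

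For the last claim, use the second formula of Proposition \ref{Pfactconforme}, $H\circ\varphi_t=e^{r_t}H$. At $t=T$ and the point $\gamma(0)$, this gives $H(\gamma(0))=H(\varphi_T\gamma(0))=e^{T\overline r(\gamma)}H(\gamma(0))$. If $\overline r(\gamma)\neq0$, then $e^{T\overline r(\gamma)}\neq 1$, and therefore $H(\gamma(0))=0$.

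As a sanity check, the direction $X(\gamma(0))$ is an eigenvector of $P$ with eigenvalue $1$, so $c$ is also a multiplier. This matches the picture on the left in Example \ref{ExampleHamconf}, where an attracting and a repelling orbit appear exactly when $c\neq1$.
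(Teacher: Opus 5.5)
Your route is the paper's: Proposition \ref{Pfactconforme} makes $P=D\varphi_T(\gamma(0))$ conformally symplectic with ratio $c=e^{T\overline r(\gamma)}$, so $c^{-1/2}P$ is symplectic. The identity $H(\gamma(0))=e^{T\overline r(\gamma)}H(\gamma(0))$ then gives the last claim. All of this is correct.

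The step that fails is the passage from the spectral symmetry to $\lambda_{n-p}\overline\lambda_{n+p+1}=c$. The involution $\lambda\mapsto c/\overline\lambda$ is inversion in the circle $|\lambda|=\sqrt c$, and it fixes that circle pointwise. So for multipliers of modulus $\sqrt c$ your relation forces $\lambda_{n+p+1}=\lambda_{n-p}$, i.e.\ every such multiplier would need even multiplicity. Your $\Omega$-duality cannot provide this. The sesquilinear form $\Omega(u,\overline v)$ pairs $E_\lambda$ with $E_{c/\overline\lambda}$, which is $E_\lambda$ itself when $|\lambda|=\sqrt c$.

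In fact the barred relation is false in general. Take the quotient of $(\mathbb R^2_{(s,\sigma)}\times\mathbb R^2_z,\ ds\wedge d\sigma+\omega_0)$, with $\omega_0$ the area form in $z$, by $(s,\sigma,z)\mapsto(s+1,e^{\kappa}\sigma,e^{\kappa/2}z)$ with $\kappa\neq 0$. Equip it with the conformal pair induced by $(e^{-\kappa s}(ds\wedge d\sigma+\omega_0),-\kappa\,ds)$ and the Hamiltonian induced by $e^{-\kappa s}(\sigma+\frac{\vartheta}{2}|z|^2)$. Then $s\mapsto[(s,0,0)]$ is a $1$-periodic orbit with $c=e^{-\kappa}$ and return map $P=\mathrm{diag}(1,c)\oplus\sqrt c\,R_{\vartheta}$. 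For $\vartheta\notin\pi\mathbb Z$, any pairing with $a\overline b=c$ must match $c$ with $1$. The remaining pair gives $\sqrt c\, e^{i\vartheta}\,\overline{\sqrt c\, e^{-i\vartheta}}=c\,e^{2i\vartheta}\neq c$.

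The paper's proof stops at the symmetry itself: $\overline\lambda$, $c\lambda^{-1}$ and $c\overline\lambda^{-1}$ are multipliers with the same multiplicity. What that symmetry does give is $|\lambda_{n-p}|\,|\lambda_{n+p+1}|=c$ for the increasing-modulus order. After reordering within each modulus, it also gives $\lambda_{n-p}\lambda_{n+p+1}=c$ without the bar. For that, use the complex-bilinear pairing $E_\lambda\leftrightarrow E_{c/\lambda}$, which is $E_\lambda\leftrightarrow E_{\overline\lambda}$ on the circle $|\lambda|=\sqrt c$. Combine it with the even multiplicity of $\pm\sqrt c$, which are the eigenvalues $\pm1$ of the symplectic map $c^{-1/2}P$. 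The barred form holds exactly when every non-real multiplier of modulus $\sqrt c$ has even multiplicity, so you should state and prove the relation in one of the valid forms.

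A minor point: in Example \ref{ExampleHamconf}, the dissipative picture with an attracting and a repelling orbit is the right one ($H=\sin 2\pi\theta_2$); the left one is conservative.
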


\begin{proof}[Proof of Proposition \ref{Pexporbper} ] We deduce from Proposition \ref{Pfactconforme}  that 
$$\varphi_T^*\Omega_{\vert\gamma(0)}=e^{T\overline{r}(\gamma)}\Omega_{\vert\gamma(0)}\text{ and } H\circ \varphi_T(\gamma(0))=e^{T\overline{r}(\gamma)}H(\gamma(0)).$$
Since $\varphi_T(\gamma(0))=\gamma(0)$, we deduce that $H(\gamma(0))=0$ when $\overline r(\gamma)\neq 0$. .

The linear isomorphism $e^{-\frac{T}{2}\overline{r}(\gamma)}D\varphi_T(\gamma(0)):(T_{\gamma(0)}M, \Omega(\gamma(0)))\to (T_{\gamma(0)}M, \Omega(\gamma(0))$ is symplectic, so if $\lambda$ is an eigenvalue of $D\varphi_T(\gamma(0))$, then $\overline{\lambda}$, $e^{T\overline{r}(\gamma)}\lambda^{-1}$ and $e^{T\overline{r}(\gamma)}\overline{\lambda}^{-1}$ are also eigenvalues of $D\varphi_T(\gamma(0))$ with the same multiplicity.

\end{proof}

\begin{remark}\phantom{fish}
\begin{enumerate}
\item There is therefore a  relation between the Lyapunov spectrum of a periodic orbit and its $\eta$-shape.
\item There exists a similar statement, ~\cite{LiveraniWojtkowski1998,AllArna2024}, for the Lyapunov exponents of the ergodic invariant measures.
\item There are some examples of attractive periodic orbits,  \cite{AllArna2024}.
\item Let $x$ be a fixed point of a conformally Hamiltonian flow, i.e. such that $X(x)=0$. Let $\theta$ be a primitive of $\eta$ in some neighborhood $V$ of $x$. Then $x$ is a critical point of $e^{-\theta}H$ and $e^{-\theta}H$ is constant along the orbits contained in $V$. Suppose then that $x$ is a local strict minimum of $e^{-\theta}H$. Then the (local) level sets of $e^{-\theta}H$ are   invariant under the flow and the local dynamics is conjugate to a symplectic dynamics close to some elliptic equilibrium. 
\end{enumerate}

\end{remark}
Consequently, periodic orbits can be either dissipative or conservative. To conclude this course, we mention a result that explains the decomposition of M into conservative and dissipative parts. A proof is given in  \cite{AllArna2024}.

\begin{theorem} Let $M$ be a compact locally symplectic manifold and let $(\varphi_t)$ be a conformally Hamiltonian flow. Then there exists a measurable decomposition of $M=E\cup G$ such that
\begin{itemize}
\item almost all point in $E$ are positively and negatively recurrent;
\item all points of $G$ satisfy 
$$\lim_{t\to \pm\infty}r_t(x)=-\infty\text{ and } \alpha(x)\cup\omega(x)\subset \{ H=0\}.$$
\end{itemize}

\end{theorem}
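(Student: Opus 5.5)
The plan is to split $M$ according to the asymptotic behaviour of the rotation number $r_t(x)=\int_0^t\eta(X\circ\varphi_s(x))\,ds$. This quantity is an additive cocycle: $r_{t+s}(x)=r_t(\varphi_s x)+r_s(x)$. Fix a smooth volume form $\mathrm{vol}$ on the compact manifold $M$, for instance $\Omega^{\wedge d}$. By Proposition~\ref{Pfactconforme},
\[
\varphi_t^*\Omega^{\wedge d}=e^{d\,r_t}\,\Omega^{\wedge d},
\]
so $e^{d r_t}$ is exactly the Jacobian of $\varphi_t$. The dissipative part should be where this Jacobian tends to $0$ in both time directions, and the conservative part should be where it does not.

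\textbf{Step 1: defining $G$ and $E$.} Set
\[
G=\{x;\ \lim_{t\to+\infty}r_t(x)=-\infty \text{ and } \lim_{t\to-\infty}r_t(x)=-\infty\},\qquad E=M\setminus G.
\]
Both sets are measurable and invariant, thanks to the cocycle relation.

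\textbf{Step 2: the limit sets of points in $G$.} Proposition~\ref{Pfactconforme} gives $H(\varphi_t x)=e^{r_t(x)}H(x)$. For $x\in G$, $H(\varphi_t x)\to 0$ as $t\to\pm\infty$. Since $M$ is compact, $\alpha(x)$ and $\omega(x)$ are nonempty, and by continuity of $H$ they lie in $\{H=0\}$. This step is routine.

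\textbf{Step 3: recurrence on $E$.} This is the main work, and I would use a Hopf-type conservative/dissipative decomposition. Because $\varphi_t$ has Jacobian $e^{d r_t}$ with respect to the finite measure $\mathrm{vol}$, Hopf's decomposition theorem for nonsingular flows splits $M$ into a conservative part $C$ and a dissipative part $D$. Conservativity means there is no wandering set of positive measure. By the Halmos recurrence theorem (the nonsingular version of Poincaré recurrence), almost every point of $C$ is positively and negatively recurrent.

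On $D$, Hopf's criterion states that $\int_{-\infty}^{+\infty}e^{d r_t(x)}\,dt<\infty$ almost everywhere. I would then identify $D$ with $G$ up to a null set, taking $E=C$ and putting the null discrepancy into whichever piece keeps the statements true.

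\textbf{The main obstacle.} The difficulty is upgrading integrability of $e^{d r_t}$ to the pointwise limit $r_t\to-\infty$. For this I would use that $|\partial_t r_t|=|\eta(X)|\le C$ on compact $M$, so $t\mapsto r_t(x)$ is uniformly Lipschitz. If $r_{t_n}(x)\ge -K$ along some sequence $t_n\to+\infty$, then $r_t(x)\ge -K-C$ on intervals of length $1$ around each $t_n$. Each such interval contributes at least a fixed positive amount to the integral, contradicting finiteness; hence $r_t\to-\infty$ as $t\to+\infty$, and the same argument works as $t\to-\infty$.

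Conversely, on $C$ the divergence of the Hopf integral prevents $r_t\to-\infty$ in both directions at a rate strong enough for integrability. Points of $C$ where $r_t\to-\infty$ slowly must still be placed correctly, and I would handle them by recurrence: a recurrent point returns near itself, so the cocycle cannot tend to $-\infty$ along return times unless $H=0$ there. Any remaining discrepancy would be absorbed into a null set. If this bookkeeping fails, I would instead define $G:=D$ directly, so that the properties in Step 2 follow from the Lipschitz argument above, and set $E:=C$.
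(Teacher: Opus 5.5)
\textbf{Verdict.} Your argument is correct. The paper gives no proof of this theorem; it defers to \cite{AllArna2024}. Your route is a complete proof:
\begin{itemize}
\item Apply Hopf's conservative/dissipative decomposition to the finite volume $\Omega^{\wedge d}$. By Proposition~\ref{Pfactconforme}, the Jacobian of $\varphi_t$ for this volume is $e^{d\,r_t}$.
\item Use the uniform Lipschitz bound $|\partial_t r_t|\le \|\eta(X)\|_\infty$ to turn $\int_{\mathbb R}e^{d\,r_t(x)}\,dt<\infty$ into $r_t(x)\to-\infty$ in both time directions.
\end{itemize}
This is the mechanism the phrase ``conservative and dissipative parts'' points to.

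\textbf{The final ``bookkeeping'' paragraph is unnecessary.} Keep $G$ and $E=M\setminus G$ exactly as defined in Step 1.
\begin{itemize}
\item Step 2 then holds for \emph{every} point of $G$ by definition.
\item For $E$, you only need $E\setminus C\subset D\setminus G$ to be null, and that is precisely what your Lipschitz argument shows.
\item Points of $C\cap G$ do no harm, because the theorem imposes no non-recurrence condition on $G$.
\end{itemize}

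\textbf{If you do want $G=D$ mod $0$,} the phrase ``absorbed into a null set'' is not yet a proof. Three facts are needed.
\begin{itemize}
\item Recurrence gives $H=0$ at almost every point of $C\cap G$.
\item Points of $G$ are not zeros of $X$, since at a zero $r_t\equiv 0$.
\item On $\{H=0\}$ one has $i_X\Omega=dH$, so $X\neq 0$ there exactly when $dH\neq 0$.
\end{itemize}
Hence $C\cap G$ lies, up to a null set, in $\{H=0,\ dH\neq 0\}$. This set is a $C^1$ hypersurface, so it is null.

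\textbf{The fallback $G:=D$ needs one more step.} You would have to discard from $D$ the null set where Hopf's criterion fails, so that \emph{all} points of $G$ satisfy the limits.

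\textbf{Two-sided recurrence on $C$.} The cleanest way is to pass to the time-one map.
\begin{itemize}
\item Use $C(\varphi_1)=C(\varphi_1^{-1})$ together with Halmos' theorem.
\item Use the Lipschitz bound to compare $\sum_n e^{d\,r_n}$ with $\int e^{d\,r_t}\,dt$ in each time direction.
\end{itemize}
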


\begin{acknowledgement}
I would like to warmly thank the organizers for this wonderful school and the audience for their patience and insightful questions.
\end{acknowledgement}
%

%
%
%

\end{document}